\newcounter{mnote}
  \let\oldmarginpar\marginpar
 \renewcommand\marginpar[1]{\-\oldmarginpar[\raggedleft\footnotesize #1]%
    {\raggedright\footnotesize #1}}
\def\bfx{{\bf x}}
\newcommand{\abs}[1]{\lvert#1\rvert}
\newcommand\norm[1]{\left\lVert#1\right\rVert}
\newcommand{\pp}{X_}
\newcommand{\qq}{Y_}
\newtheorem{theorem}{Theorem}
\newtheorem{lemma}{Lemma}
\newtheorem{proposition}{Proposition}
\newtheorem{corollary}{Corollary}
\newtheorem{example}{Example}
\newtheorem{definition}{Definition}
\newenvironment{proof}{\begin{trivlist}\item[]{\emph{Proof.}}}
               {\hfill$\Box$\end{trivlist}}
\begin{document}
\title{On Convergence of the Alternating Projection Method \\ 
for  Matrix Completion and Sparse Recovery Problems}

\author{
Ming-Jun Lai 
\footnote
{Department of Mathematics,
University of Georgia, Athens, GA 30602,
mjlai@uga.edu. 
This research is partially supported 
the National Science Foundation under the grant \#DMS 1521537. }
\and Abraham Varghese 
\footnote
{Department of Mathematics, University of Georgia, Athens, GA 30602, opticsabru@gmail.com.}}

\maketitle

\section{Introduction}
The last two decades have witnessed a resurgence of research in sparse 
solutions of underdetermined linear systems and matrix completion 
and recovery. 
The matrix completion problem was inspired by Netflix problem 
(cf. \cite{netflix}) and 
was pioneered by [Cand\`es, Recht, 2010\cite{CR09}] and [Cand\`es and Tao, 2010\cite{CT10}]. 
The problem can be explained as follows. 
One would like to recovery  a matrix $M\in \mathbb{R}^{m,n}$ 
from a given set of  entries $M_{ij}, (i,j)\in \Omega
\subset \{1, \cdots, m\}\times \{1, \cdots, n\}$ by filling in the 
missing entries such that the resulting matrix has the lowest possible rank. 
In other words, we solve the following rank minimization problem:
\begin{equation}
\label{rm}
\min_{X\in \mathbb{R}^{m\times n}} \quad \hbox{rank } (X): \quad 
\hbox{ such that }
\quad \mathcal{A}_\Omega(X)=\mathcal{A}_\Omega(M),
 \end{equation}
 where $\mathcal{A}_\Omega(X)=\mathcal{A}_\Omega(M)$ means the entries 
of  the matrix $X$  are  the same entries of matrix $M$ for  indices 
$(i,j)\in \Omega$.  
Clearly, if we are only given a few entries, say one entry of matrix $M$
 of size $2\times 2$, we are not able to recover $M$ even assuming the  rank of $M$ is $1$. There are 
necessary conditions on how many entries one must know in order to be able to recover $M$. 
Information theoretic lower bound can be found in \cite{CT10}.
 
There are many approaches to recovery such a matrix developed in the last ten years. 
One popular approach is to find a matrix with minimal summation of its singular values. That is,  
\begin{equation}
\label{mainproblemMC}
\min_{X\in \mathbb{R}^{m\times n}} \{\|X\|_*,  \qquad {\cal A}_\Omega (X) = {\cal A}_\Omega(M)\}, 
\end{equation}
where $\|X\|_*= \sum_{i=1}^k \sigma_i(X)$ is the nuclear norm of $X$ 
with $k=\min\{m, n\}$ and $\sigma_i(X)$ are singular values of matrix 
$X$. It is known that $f(X)=\|X\|_*$ is a convex function of $X$, 
the above problem (\ref{mainproblemMC}) is a convex minimization 
problem. By adding $\dfrac{1}{\lambda}\|X\|_F$ to the minimizing functional 
in (\ref{mainproblemMC}), the resulting minimization problem can 
be solved by using Uzawa type algorithms in  
[Cai, Cand\`es, Shen, 2010\cite{CCS10}] or solved by using its dual 
formulation, e.g. in [Lai and Yin, 2013\cite{LY13}. The minimization
in (\ref{mainproblemMC}) can also reformulated as a fixed point 
iteration and Nestrov's acceleration technique can be used. See 
[Ma, Goldfarb, Chen, 2011\cite{MGC11}] and [Toh and Yun, 2010\cite{TY10}].  
This constrained minimization (\ref{mainproblemMC}) 
is usually converted into an  unconstrained minimization using Lagrange
multiplier method or augmented Lagrange minimization method. 
The alternating direction method of multiplier (ADMM) can be used to complete a matrix. 
See [Tao and Yuan, 2011\cite{TY11}],  and [Yang and Yuan, 2012\cite{YY12}],
Many researchers have studied the matrix completion via variants of the constrained 
convex minimization approach. 

Certainly, the rank completion is also studied by using other approaches. 
See  [Jain, Meka and Dhillon, 2010\cite{JMD10}] for singular value projection method and  
[Wen, Yin, Zhang, 2012\cite{WYZ12}, 
[Tanner and Wei, 2016\cite{TW16}] for alternating least squares, the SOR approaches, 
steepest descent minimization approaches. See [Lai, Xu, Yin, 2013\cite{LXY13}] for $\ell_q$ minimization 
approach for $q\in (0, 1)$. In addition,  a greedy approach, e.g. orthogonal matching pursuit 
(OMP) and iterative hard thresholding approach  can be used as well. See [Wang, Lai, Lu, and Ye, 2015 
\cite{WL15}] and [Tanner and Wei, 2013\cite{TW13}].  Iteratively reweighted nuclean norm minimization, 
Riemannian conjugated gradient method, and alternating projection algorithm 
in  [Mohan and Fazel, 2012\cite{MF12}], 
[Vandereycken, 2013\cite{V13}],   [Cai, Wang, and Wei, 2016\cite{CWW16}], 
[Wei, Cai, Chan, and Leung, 2016\cite{WCCL16}],  [Jiang, Zhong, Liu, and Song, 2017\cite{JZLS17}, and etc..  
Among all these algorithms, the computational algorithm proposed in \cite{WL15} seems the most 
efficient one in completing an incomplete matrix. However, the accuracy of the completed matrices 
is still a question.  Usually, the researchers use the relative Frobenius norm errors,
i.e., $\|M- M_k\|_F/\|M\|_F$ to measure the accuracy for a given matrix $M$ of size $m\times n$, 
where $M_k$ is the $k$th iteration from a matrix completion algorithm. When the size of $M$ is very large 
and so is $\|M\|_F$ and the missing rate $1-|\Omega|/(mn)<<1$ small or $|\Omega| \approx mn$, 
the relative Frobenous norm error will be very small anyway and hence will not be a good measure of errors, 
where $\Omega$ is the set of the indices of known entries. 
It is better to use a true error such as the maximum norm of all entries of the residual matrix 
$M-M_k$ to check the accuracy of the completed matrices. Then many of 
the existing algorithms mentioned above will fail to produce a good recovery. Certainly, the main possible reason may be that the relaxation of rank minimization problem is used.
One of the mathematical problems is to find sufficient conditions 
which ensure the uniqueness of the minimization. However, some sufficient conditions are unrealistic, 
e.g. only one entry is missing. Another research problem is to design efficient matrix completion algorithms. 
It is interesting to have an algorithm which is convergent fast, say in a linear fashion.    

Recently, the authors just discovered the reference \cite{JZLS17} which presents a numerical study of a
computational algorithm called Alternating Projection(AP) Algorithm which is  exactly the algorithm 
that the authors of this paper have studied for a year. Mainly, we also had the same algorithm and  
was aware of the good numerical performance. However, we would like to know why the algorithm is convergent, 
under what kind of conditions the algorithm is convergent, and under what situation the convergence is linear. 
The study took many months and delayed our announcement of the AP algorithm. Nevetheless, 
the purpose of this paper is to explain why and when the AP Algorithm  will converge and the convergence 
is linear. In addition, we shall explain the existence of matrix completion and how many matrices can be
completed to have the same given entries.  
In general, for randomly chosen values for a fixed location set $\Omega$ to 
be known entries of a matrix, one will not be able to complete it by using a rank $r$ matrix.  
Hence, we shall discuss the convergence of the AP algorithm under the assumption 
that the given entries are from a matrix with rank $r$.  Also, we will  
provide an approach to choose a good initial guess such that 
the AP Algorithm will converge faster than using the simple straightforward initial guess $M_\Omega$ as used  
in \cite{JZLS17}. An application to image process will be shown to demonstrate a nice performance 
of the AP algorithm.  
Finally, we shall extend the ideas from the AP Algorithm to find sparse solution 
of under determined linear systems. 

Indeed, the matrix completion problem  is closely related to the sparse vector recovery problem. 
Sparse solutions of underdetermined linear systems have been studied for last twenty years starting 
from [Chen, Donoho, Saunders, 1998\cite{CDS98}] and [Tibshirani, 1996\cite{T96}] 
and then  became a major subject of research as a part of compressive sensing study since 
2006 due to [Donoho, 2006\cite{D06}], [Cand\'es, 2006\cite{C06}], [Cand\'es and Tao, 2005\cite{CT05}], and  
[Cand\'es, Romberg, and Tao, 2006\cite{CRT06}]. 
Many numerical algorithms  have been developed since then. Several algorithms are based on classic 
convex minimization approach (cf. e.g. \cite{HYZ08}, \cite{BT09}, \cite{LY13}, and etc.). 
Several algorithms are based on iteratively reweighted $\ell_1$ minimization or $\ell_2$ 
minimizations (cf. [Cand\'es, Watkin, and Boyd, 2008\cite{CWB08}], [Daubechies, DeVore, 2010,\cite{DDFG10}] 
and [Lai, Xu, and Yin, 2013\cite{LXY13}]). Several researchers started the $\ell_q$ minimization 
for $q\in (0, 1)$, e.g. in [Foucart and Lai, 2009\cite{FL09}] and [Lai and Wang, 2011\cite{LW11}].  
Various other algorithms are based on greedy or orthogonal matching pursuit (cf. e.g. 
[DeVore and Temlyakov, 1996\cite{DT96}], [Tropp, 2004\cite{T04}], and [Kozlov and Petukhov, 2010\cite{KP10}]).  
some algorithms are also based on the hard thresholding technique such as in 
[Blumensath and Davies, 2009\cite{BD09}], [Blumensath and Davies, 2010\cite{BD10}], 
[Foucart, 2011\cite{F11}] and etc.. Among the various other numerical methods were also proposed. 
See, e.g. [Dohono, Maleki, and Montanari, 2009\cite{DMM09}], 
[Rangan, 2011\cite{R11}], [Gong, Zhang, Lu, Huang, and Ye,2013\cite{GZLHY13}], 
[Wang and Ye, 2014\cite{WY13}] and etc.. 
To the best of our knowledge, the method in \cite{KP10} is the most effective in finding  sparse solutions. 
Thus, we shall extend the alternating projection method to the sparse recovery problem and establish 
some sufficient conditions that our algorithm is convergent and its convergence is linear.  

The paper is simply organized as follows. In the next section, we study the convergence of the AP algorithm.  
The section is divided into three subsections.  We first study the case that the guess rank $r_g$ is 
the same as the rank of the matrix to be completed. 
Next we study the remaining case that $r_g$ is not the same as the rank of the matrix whose known entries
are given. Finally in this section,  we show 
the excellent performance of the AP algorithm when starting from 
an initial matrix obtained from the OR1MP algorithm in \cite{WL15}. 
In \S 3, we extend the AP algorithm to the compressive sensing setting. \S 3 is divided into 
two subsections. First we study the convergence of 
the alternating projection algorithm for compressive sensing. Then we present some numerical experiments. 
Comparing with many known algorithms, the alternating projection method performs very well. 
Finally in this paper, we present an algebraic geometry analysis to show the existence of matrix 
completion and the number of matrices which can be completed from the given known entries of a rank $r$
matrix.

\section{The Alternating Projection Algorithm for Matrix Completion}
Let $\mathcal{M}_r$ be  the manifold in $\mathbb{R}^{n^2}$ consisting of $n\times n$  matrices
(without loss of generality) of rank $r$ 
and denote by $P_{\mathcal{M}_r}$ the projection operator onto the manifold $\mathcal{M}_r$. 
Next consider the affine space $\mathcal{A}_\Omega$ defined as follows:
$$
\mathcal{A}_\Omega := \left\{X \mid \mathcal{P}_\Omega(X-M) = 0 \right\}.
$$
Affine spaces $\mathcal{A}_\Omega$ consists of matrices which has exactly same entries as $M$ 
with indices in $\Omega$. Although it is a convex set, $\mathcal{A}_\Omega$ is not a bounded set. 
Starting with an initial guess $X_0 = \mathcal{P}_\Omega(M)$ or a good initial guess (see our numerical 
experiments near the end of this section), 
the Alternating Projection (AP) Algorithm can be simply stated as follows: 

\RestyleAlgo{boxruled}
\LinesNumbered
\begin{algorithm}[ht]
\label{alg1}
	\LinesNotNumbered
	\caption{Alternating Projection Algorithm for Matrix Completion}
	\KwData{Rank $r$ of the solution $M$, the tolerance $\epsilon$ whose default value is 1e-6}
	\KwResult{$X_k$, a close approximation of $M$}
	Initialize $X_0 = \mathcal{P}_\Omega(M)$ or any other good guess\;
	\Repeat{$\norm{X_{k+1}-X_k} < \epsilon$}{
		\textbf{Step 1:} $Y_k = P_{\mathcal{M}_r}(X_k)$\\
		\textbf{Step 2:} $X_{k+1} = P_{\mathcal{A}_\Omega}(Y_k)$ 
	}
\end{algorithm}

\noindent
In Algorithm~\ref{alg1} above, 
the computation of the projection $P_{\mathcal{M}_r}$ can be realized easily by using the singular 
value decomposition.   $P_{\mathcal{A}_{\Omega}}$ is the projection onto $\mathcal{A}_\Omega$.
The computation $P_{\mathcal{A}_{\Omega}}(Y_k)$ is obtained simply by setting the matrix 
entries of $Y_k$ in positions $\Omega$ equal to the corresponding entries in $M$. Therefore, this 
algorithm is simple and easy without any minimization. The algorithm is the same as one in [Jiang, Zhong, 
Liu, and Song, 2017\cite{JZLS17}]. One of the purposes of our paper is to show the convergence under various 
conditions. 

Before studying the convergence of Algorithm~\ref{alg1}, let us comment on the existence of a rank r matrix
which has the known entries in position $\Omega$. Let $m=|\Omega|$ be the cardinality of $\Omega$. We shall 
assume $m>2nr-r^2$. For convenience, we shall use the complex $m$ dimensional space $\mathbb{C}^m$ to
discuss the existence. We will show that if one randomly chooses the entries of a matrix $M$ in the positions
in $\Omega$ from $\mathbb{C}^m$, the probability of completing the matrix $M$ of rank $r$ is zero. See
Theorem~\ref{nochance}.  Thus, 
we have to assume that the given entries are from a rank r matrix $M$. In other words,  
we call a vector ${\bf x} \in \mathbb{C}^m$  $r$-\emph{feasible} if there exist a rank $r$ matrix $M$ 
such that $M|_\Omega = {\bf x}$. If the entries ${\bf x}\in \mathbb{C}^m$ over $\Omega$ 
are  $r$-feasible, we would like to know if there is a unique rank-r matrix $M$ satisfying $M|_\Omega
={\bf x}$. We can show that
number of ways to complete a matrix of rank $r$ is less than or equal to $\displaystyle 
\prod_{i=0}^{n-r-1}\frac{\binom{n+i}{r}}{\binom{r+i}{r}}$ in general. See Theorem~\ref{numberofcompletion}. 
To prove these results, we need some knowledges from algebraic geometry. For convenience, the details 
of the statements and their proofs are thus given in the last section of this paper.

In the rest of this section, we shall assume that the given entries are from a matrix of rank $r$. However, 
in general, we do not know the rank $r>0$ of $M$ in advance. 
Thus, we have to make a guess of $r$. Let $r_g$ be a guessed rank. 
As we know  any reasonable choice of $r_g$ must  satisfy $m> 2nr_g- r_g^2$,  
we still have either $r_g<r$, $r_g=r$ or $r_g>r$.  
Choose a correct rank $r_g=r$ is a key to have the AP Algorithm, i.e. 
Algorithm~\ref{alg1} converges with a linear convergence rate. Otherwise, the convergence rate may not 
be linear. That is, when $r_g=\hbox{rank}(M)$, 
we can show that  Algorithm~\ref{alg1} converge to $M_{r_g}$ linearly. 
Otherwise, when $r_g< \hbox{rank}(M)$,
Algorithm~\ref{alg1} converges to a matrix under some conditions and 
may not be the desired matrix $M$. Thus, this section
is divided into three parts. We shall discuss the two cases in the first two subsections and leave the
numerical results in the third subsection.  

Another important issue is the distribution of $\Omega\subset \{(i,j), i,j=1, \cdots, n\}$. Clearly, if a 
column of $M$ is completely missing, one is not able to recover this column no matter what kind of rank $r$
of $M$ is and how large $m=|\Omega|$ is. If we let $\bfx\in \mathbb{R}^{n^2-m}$ be the unknown entries 
of $M$, the determinant of the 
sub-matrix of any $r+1$ rows and $r+1$ columns of $M$ will be zero which forms a polynomial equation 
with coefficients formed from known entries $M|_{\Omega}$. We
have $n^2-m$ unknowns while ${n\choose r+1}^2$ submatrices from $M$ which will result in ${n\choose r+1}^2$ 
polynomial equations. Since we have $n^2-m < n^2 -2nr+ r^2=(n-r)^2$ unknowns and ${n\choose r+1}^2$ equations, 
the system of polynomial equations is overdetermined. 
We have to assume that the system is consistent, i.e. the system
has a solution. Otherwise, the overdetermined system has no solution, i.e. the matrix $M$ can not be 
completed. 
 Hence, for the rest of the paper, let us assume that the overdetermined system of polynomial equations have a solution, i.e. 
$M$ can be completed .

\subsection{Convergence of Algorithm~\ref{alg1} When 
$r_g=\hbox{Rank}(M)$}
We start with some preliminary results. 
\begin{lemma}\label{lemma1}
Let $L$ be a linear subspace of $\mathbb{R}^n$. Suppose $P_{L}$ denote 
the orthogonal projection onto $L$. Then, for any $x \in \mathbb{R}^n$
	$$
	\norm{x} = \norm{P_L(x)} \text{ if and only if } x \in L
	$$ 
	Equivalently,
	$$
	\norm{P_L(x)} < \norm{x} \text{ if and only if } x \not \in L
	$$
\end{lemma}
\begin{proof} The 'if' part is clear. 
So, let us prove the 'only if' part.

Let $l_1, l_2, \cdots l_k$ be a orthonormal basis of $L$. Extend it to 
a orthonormal basis $l_1,l_2, \cdots l_n$ of $\mathbb{R}^n$. 	Then, 
$$
	x = \sum_{i = 1}^{n} \langle x , l_i\rangle l_i
$$ 
and
$$
	\norm{x}^2 = \sum_{i = 1}^{n} { \langle x , l_i\rangle}^2 = \norm{P_L(x)}^2 + \sum_{i = k+1}^{n} { \langle x , l_i\rangle}^2
	$$
	Now it follows that if $\norm{x} = \norm{P_L(x)}$, then 
$\sum_{i = k+1}^{n} { \langle x , l_i\rangle}^2 = 0$, which 
implies $\langle x, l_i\rangle = 0$ for all $i \geq k+1$. Therefore, 
$x =  \sum_{i = 1}^{k} \langle x , l_i\rangle l_i \in L$.   
\end{proof}

\begin{lemma}\label{lemma2}
Let $L_1$ and $L_2$ be two linear subspaces of $\mathbb{R}^n$. Suppose 
$P_{L_1}$ and $P_{L_2}$ denote the orthogonal projection onto $L_1$ and 
$L_2$ respectively. Then, $L_1 \cap L_2 = \{0\}$ if and only if
\begin{equation}
	\norm{P_{L_2}P_{L_1}} < 1.
\label{onekey}
\end{equation}
\end{lemma}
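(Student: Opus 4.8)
The plan is to prove both implications directly, using Lemma~\ref{lemma1} as the main tool together with the compactness of the unit sphere in the finite-dimensional space $\mathbb{R}^n$. Throughout, I read $\norm{P_{L_2}P_{L_1}}$ as the operator (spectral) norm, and I will repeatedly use the elementary fact that an orthogonal projection is norm-nonincreasing, i.e. $\norm{P_{L}(y)}\le\norm{y}$ for every $y$.

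For the easy direction I would prove the contrapositive of ``$\norm{P_{L_2}P_{L_1}}<1 \Rightarrow L_1\cap L_2=\{0\}$''. Suppose there is a nonzero $x\in L_1\cap L_2$. Since $x\in L_1$ we have $P_{L_1}(x)=x$, and since $x\in L_2$ we have $P_{L_2}(x)=x$, so $P_{L_2}P_{L_1}(x)=x$ and hence $\norm{P_{L_2}P_{L_1}(x)}=\norm{x}$. This forces $\norm{P_{L_2}P_{L_1}}\ge 1$, contradicting the hypothesis. Thus $L_1\cap L_2=\{0\}$.

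For the converse, assuming $L_1\cap L_2=\{0\}$, I first note that $\norm{P_{L_2}P_{L_1}}\le 1$ always holds because both projections are norm-nonincreasing. The real content is the \emph{strict} inequality, and here is where I expect the main obstacle to lie: a priori the supremum defining the operator norm need not be attained, so I would invoke compactness. Because the unit sphere of $\mathbb{R}^n$ is compact and $x\mapsto\norm{P_{L_2}P_{L_1}(x)}$ is continuous, the norm is attained at some unit vector $x_0$, i.e. $\norm{P_{L_2}P_{L_1}}=\norm{P_{L_2}P_{L_1}(x_0)}$ with $\norm{x_0}=1$. Supposing for contradiction that this value equals $1$, the chain
$$
1=\norm{P_{L_2}\big(P_{L_1}(x_0)\big)}\le\norm{P_{L_1}(x_0)}\le\norm{x_0}=1
$$
forces both inequalities to be equalities.

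It then remains to squeeze a contradiction out of these two equality cases using Lemma~\ref{lemma1}. From $\norm{P_{L_1}(x_0)}=\norm{x_0}$ the lemma (applied to $L_1$) gives $x_0\in L_1$, so $P_{L_1}(x_0)=x_0$. Substituting into the outer equality $\norm{P_{L_2}(P_{L_1}(x_0))}=\norm{P_{L_1}(x_0)}$ and applying Lemma~\ref{lemma1} to $L_2$ yields $P_{L_1}(x_0)=x_0\in L_2$. Hence $x_0\in L_1\cap L_2$ with $x_0\ne 0$, contradicting $L_1\cap L_2=\{0\}$. Therefore $\norm{P_{L_2}P_{L_1}}<1$, which completes the argument. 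The only subtle point in the whole proof is the appeal to compactness to turn the supremum into an attained maximum; everything else is a direct application of Lemma~\ref{lemma1}.
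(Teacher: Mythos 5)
Your proof is correct, and it leans on the same key tool as the paper's — Lemma~\ref{lemma1} — but the hard direction is organized differently, and in a way that is actually more careful. The paper argues directly: for every nonzero $x$ it shows the pointwise strict inequality $\norm{P_{L_2}P_{L_1}(x)} < \norm{x}$ (splitting into the cases $P_{L_1}(x)=0$ and $P_{L_1}(x)\notin L_2$), and then simply concludes $\norm{P_{L_2}P_{L_1}}<1$. That last step is exactly where compactness of the unit sphere is silently used, since a pointwise strict contraction need not have operator norm strictly below $1$ in general (for two closed subspaces of a Hilbert space with trivial intersection but zero angle, the pointwise inequality holds while the operator norm equals $1$). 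You instead run the argument by contradiction at a norm-attaining unit vector $x_0$, invoking compactness explicitly, and then squeeze the two equality cases through Lemma~\ref{lemma1} twice to produce a nonzero vector in $L_1\cap L_2$. The paper's version is shorter and yields the stronger pointwise statement along the way; yours makes the finite-dimensionality hypothesis do visible work and would make clear to a reader exactly which step fails in infinite dimensions. The easy direction is identical in both proofs.
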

\begin{proof}
Assume $L_1 \cap L_2 = \{0\}$. Let $x \neq 0 \in \mathbb{R}^n$. Then if 
$P_{L_1}(x)=0$, then $P_{L_2}P_{L_1}(x)=0<\norm{x}$. 
Otherwise, $P_{L_1}(x) \neq 0$. 
Since $L_1 \cap L_2 = \{0\}$, $P_{L_1}(x) \not \in L_2$. 
Therefore, using Lemma \ref{lemma1}, we get
$$
\norm{P_{L_2}P_{L_1}(x)} < \norm{P_{L_1}(x)} \leq \norm{P_{L_1}}\norm{x} \le \norm{x}. 
$$
Hence, we have 
$$
\norm{P_{L_2}P_{L_1}(x)} < \norm{x}
$$ 
for all non-zero $x \neq 0 \in \mathbb{R}^n$. 	So, 
$$
	\norm{P_{L_2}P_{L_1}} < 1.
$$
To prove the other direction, assume $\norm{P_{L_2}P_{L_1}} < 1$. 
Assume, on the contrary, that $L_1 \cap L_2 \neq \{0\}$. Let $x \neq 0 
\in L_1 \cap L_2$ be a nonzero vector in the intersection. Then 
$P_{L_2}P_{L_1}(x) = P_{L_2}(x) = x$ which implies that 
$\norm{P_{L_2}P_{L_1}(x)} = \norm{x}$, contradicting the assumption. 
\end{proof}

\begin{lemma}\label{gradientofproj}
Let $M \in \mathcal{M}_r$.  
Then the projection operator $P_{\mathcal{M}_r}$ is well 
defined (single-valued) 
in a neighborhood of $M$ and is differentiable with gradient
\begin{equation}
\label{key1}
\nabla P_{\mathcal{M}_r}(M) = \operatorname{P}_{T_{\mathcal{M}_r}(M)},
\end{equation} 
where $T_\mathcal{M}(M)$ is the tangent space of $\mathcal{M}$ at $M$ 
and $\operatorname{P}_{T_\mathcal{M}(M)}$ is the projection operator 
onto the tangent space. 
\end{lemma}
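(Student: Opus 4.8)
The plan is to treat $\mathcal{M}_r$ as a smooth embedded submanifold of $\mathbb{R}^{n\times n}$ and to deduce both the local single-valuedness and the differentiability of the metric projection from the spectral structure of $M$, after which the formula for the derivative follows from two elementary geometric observations.

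First I would record that the set of $n\times n$ matrices of rank exactly $r$ is a smooth embedded submanifold of $\mathbb{R}^{n\times n}$ of dimension $2nr - r^2$: locally, after permuting rows and columns so that a fixed $r\times r$ block is invertible, the rank-$r$ condition reduces to the vanishing of the associated Schur complement, and the implicit function theorem then produces a smooth local graph parametrization. In particular the tangent space $T_{\mathcal{M}_r}(M)$ and its orthogonal complement, the normal space $N_{\mathcal{M}_r}(M)$, are well defined and split $\mathbb{R}^{n\times n}$ orthogonally.

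Second, for the single-valuedness and smoothness of $P_{\mathcal{M}_r}$ near $M$, I would invoke the Eckart--Young--Mirsky theorem: a best rank-$r$ approximation of any $X$ is obtained by truncating the singular value decomposition after the $r$ largest singular values, and this minimizer is \emph{unique} precisely when there is a spectral gap $\sigma_r(X) > \sigma_{r+1}(X)$. Since $M$ has rank exactly $r$ we have $\sigma_r(M) > 0 = \sigma_{r+1}(M)$, and by continuity of singular values this strict gap, together with $\sigma_r > 0$, persists on a neighborhood $U$ of $M$. Hence $P_{\mathcal{M}_r}$ is single-valued on $U$ and its image there lies in $\mathcal{M}_r$; its differentiability on $U$ then follows from the real-analytic dependence of the spectral projectors onto the top $r$ singular subspaces on the entries of $X$ (equivalently, from the tubular-neighborhood theorem applied to the submanifold of Step~1, which yields positive local reach and a $C^1$ nearest-point map).

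Finally, to identify the derivative I would use that $P_{\mathcal{M}_r}$ fixes $\mathcal{M}_r$ pointwise. For any smooth curve $\gamma(t)$ in $\mathcal{M}_r$ with $\gamma(0)=M$ we have $P_{\mathcal{M}_r}(\gamma(t))=\gamma(t)$, so differentiating at $t=0$ gives $\nabla P_{\mathcal{M}_r}(M)\,\dot\gamma(0)=\dot\gamma(0)$; as $\dot\gamma(0)$ ranges over all of $T_{\mathcal{M}_r}(M)$, the derivative restricts to the identity there. Conversely, for $v\in N_{\mathcal{M}_r}(M)$ the point $M+tv$ lies on the normal fiber over $M$ and, for $t$ small enough to stay within the reach, its unique nearest point of $\mathcal{M}_r$ is $M$ itself; thus $P_{\mathcal{M}_r}(M+tv)=M$ and differentiating gives $\nabla P_{\mathcal{M}_r}(M)\,v=0$. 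Since $\mathbb{R}^{n\times n}=T_{\mathcal{M}_r}(M)\oplus N_{\mathcal{M}_r}(M)$ orthogonally, a linear map that is the identity on the first summand and zero on the second is exactly the orthogonal projection onto $T_{\mathcal{M}_r}(M)$, which is (\ref{key1}). The main obstacle is the rigorous justification of smoothness in the third step, namely the singular-value perturbation (positive-reach) estimate; the derivative computation in the last step is then routine.
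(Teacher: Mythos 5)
Your proof is correct, and it reaches the conclusion by a genuinely different route than the paper in its second half. For local single-valuedness the two arguments coincide in substance: both identify $P_{\mathcal{M}_r}$ with SVD truncation via Eckart--Young--Mirsky and show the gap $\sigma_r(X)>\sigma_{r+1}(X)$ persists near $M$; the paper makes this quantitative (Weyl/Mirsky perturbation bounds on the ball of radius $\sigma_r(M)/4$), while you use continuity of singular values, which is equivalent for the purpose. For differentiability and the gradient formula, however, the paper simply cites Theorem 25 of Feppon and Lermusiaux \cite{FlorianPierre}, which gives the full differential of the SVD truncation at an arbitrary point $X$ with $\sigma_r(X)>\sigma_{r+1}(X)$ --- namely $\operatorname{P}_{T_{\mathcal{M}_r}(P_{\mathcal{M}_r}(X))}$ plus curvature correction terms weighted by $\sigma_{r+j}/(\sigma_i\mp\sigma_{r+j})$ --- and observes that at $X=M\in\mathcal{M}_r$ these corrections vanish since $\sigma_{r+1}(M)=\cdots=\sigma_n(M)=0$. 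You instead obtain differentiability abstractly (analyticity of spectral projectors under a gap, or the tubular-neighborhood/positive-reach theorem for the embedded submanifold of your Step 1) and then identify the derivative at $M$ intrinsically: identity on $T_{\mathcal{M}_r}(M)$ because $P_{\mathcal{M}_r}$ fixes the manifold, zero on the normal space because normal fibers within the reach project to their base point. Your route is self-contained and explains geometrically \emph{why} the formula $\nabla P_{\mathcal{M}_r}(M)=\operatorname{P}_{T_{\mathcal{M}_r}(M)}$ holds exactly at points on the manifold, but it yields the derivative only at $M$; the paper's citation buys the derivative at every nearby $X$ as well, with the curvature terms quantifying how $\nabla P_{\mathcal{M}_r}$ deviates from a tangent projection off the manifold (information the paper does not actually need for the lemma). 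Two refinements would tighten your sketch: first, since $\overline{\mathcal{M}_r}$ contains lower-rank matrices, note that any nearest point in $\overline{\mathcal{M}_r}$ to a point sufficiently close to $M$ must itself lie near $M$ and hence have rank exactly $r$, so the local tubular-neighborhood argument legitimately applies; second, your normal-fiber claim admits a direct verification bypassing reach theory altogether, since the normal space at $M=U_r\Sigma_rV_r^\top$ consists of matrices $U_\perp C V_\perp^\top$, so $M+tv$ has singular values given by those of $\Sigma_r$ together with those of $tC$, and truncation returns $M$ exactly for small $t$.
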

\begin{proof}
Since the projection $P_{\mathcal{M}_r}$	 of a matrix $X$ is 
obtained by hard thresholding the least $n-r$ 
singular values, we see that the projection is unique if 
$\sigma_{r}(M)\neq \sigma_{r+1}(M)\ge 0$. Now consider the 
neighborhood $V$ of $M$ given by 
$$
V := \left\{X \in \mathbb{R}^{n\times n} \mid \norm{X-M}_F < \frac{\sigma_{r}(M)}{4}\right\}.
$$ 
Then, by Weyl's  \cite{Weyl} or more generally Mirsky's \cite{Mirsky} 
perturbation bounds on singular values, we have
$$
\abs{\sigma_{r}(X)-\sigma_{r}(M)}\leq \norm{X-M}_F 
< \frac{\sigma_{r}(M)}{4}
$$  
and 
$$
\abs{\sigma_{r+1}(X)-\sigma_{r+1}(M)}\leq \norm{X-M}_F 
< \frac{\sigma_{r}(M)}{4}.
$$ 
Hence, noting $\sigma_{r+1}(M)=0$, we observe that 
$$
\sigma_{r+1}(X)<\frac{\sigma_{r}(M)}{4}<\frac{3\sigma_{r}(M)}{4}
<\sigma_{r}(X). 
$$ 
In particular, 
$$
\sigma_{r}(X)\neq \sigma_{r+1}(X). 
$$ 
Therefore, $P_{\mathcal{M}_r}$ is single valued in the neighborhood $V$.

For second part of the result, we refer to Theorem 25 
in \cite{FlorianPierre} which is stated below.  
We have changed the notations for ease of reading. In particular, note that although 
the $X$ has rank greater than $r$ in \cite{FlorianPierre}, its easy to see that their proof goes 
through when $X$ has rank greater than or equal to $r$. 
Intuitively, it is easy to see that the gradient
vector of the projection $P_{{\cal M}_r}$ of smooth manifold ${\cal M}_r$ at $M$ 
will be the projection onto the tangent plane $T_{{\cal M}_r}$ at $M$ in general. 
\end{proof}

The following results was used in the proof above. 
\begin{theorem}[F. Feppon and P.J. Lermusiaux, 2017\cite{FlorianPierre}]
Consider $X \in \mathbb{R}^{n \times m}$ with rank greater than $r$ and denote $X = \sum_{i 
= 1}^{r+k}\sigma_iu_iv_i^\top $ be its SVD decomposition, where the singular values are ordered decreasingly: 
$\sigma_1 \geq \sigma_2 \geq \cdots \sigma_{r+k}$. Suppose that the orthogonal projection $P_{\mathcal{M}_r}(X)$
of $X$ onto $\mathcal{M}_r$ is uniquely defined, that is $\sigma_{r}(X) > \sigma_{r+1}(X)$. Then 
$P_{\mathcal{M}_r}$, the SVD truncation operator of
order $r$, is differentiable at $X$ and the differential in a direction $Y$	is given by the formula
\begin{align}
\label{key2}
\nabla_Y P_{\mathcal{M}_r}(X) =& \operatorname{P}_{T_{\mathcal{M}_r}(P_{\mathcal{M}_r}(X))}(Y) \cr  
&+ \sum_{1 \leq i \leq r\atop 1 \leq j \leq k}
\left[ \frac{\sigma_{r+j}}{\sigma_i - \sigma_{r+j}} 
\langle Y, \Phi^{+}_{i,r+j}\rangle \Phi^{+}_{i,r+j} - \frac{\sigma_{r+j}}{\sigma_i + \sigma_{r+j}} 
\langle Y, \Phi^{-}_{i,r+j}\rangle \Phi^{-}_{i,r+j}  \right],
\end{align}
where 
 $$
 \Phi^{\pm}_{i,r+j} = \frac{1}{\sqrt{2}}(u_{r+j}v_i^\top  \pm u_iv_{r+j}^\top ) 
 $$
 are the principal directions corresponding to the principal curvature of the manifold of rank-$r$ matrices.
\end{theorem}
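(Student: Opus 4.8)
The statement is a cited result, but here is the route I would take to prove it from scratch. The plan is to regard $P_{\mathcal{M}_r}$ as the truncated-SVD map $X \mapsto \sum_{i=1}^r \sigma_i u_i v_i^\top$ and to differentiate it by first-order perturbation theory along the line $X(t) = X + tY$. The gap hypothesis $\sigma_r(X) > \sigma_{r+1}(X)$ is what makes everything work: it isolates the leading $r$-dimensional singular subspaces, so by analytic perturbation theory (equivalently, the holomorphic functional calculus applied to the resolvent of $X^\top X$, or the implicit function theorem for the corresponding spectral projector) one may choose the leading singular values $\sigma_i(t)$ and singular vectors $u_i(t), v_i(t)$ to depend smoothly on $t$ near $t=0$. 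Consequently $P_{\mathcal{M}_r}(X(t)) = \sum_{i=1}^r \sigma_i(t) u_i(t) v_i(t)^\top$ is differentiable, and I can differentiate termwise.

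First I would record the first-order perturbation equations. Writing $\dot u_i = \sum_a \omega_{ai} u_a$ and $\dot v_i = \sum_a \nu_{ai} v_a$ and setting $a_{pq} := u_p^\top Y v_q$, differentiation of $Xv_i = \sigma_i u_i$ and $X^\top u_i = \sigma_i v_i$ together with the orthonormality constraints $\omega_{ai} = -\omega_{ia}$, $\nu_{ai} = -\nu_{ia}$ yields $\dot\sigma_i = a_{ii}$ and, for each pair $(i,a)$ with $i \ne a$, the $2\times 2$ system
\begin{equation*}
\begin{pmatrix}\sigma_i & -\sigma_a \\ -\sigma_a & \sigma_i\end{pmatrix}\begin{pmatrix}\omega_{ai}\\ \nu_{ai}\end{pmatrix} = \begin{pmatrix} a_{ai}\\ a_{ia}\end{pmatrix}.
\end{equation*}
Its determinant $\sigma_i^2 - \sigma_a^2$ is nonzero precisely because of the gap (for $i\le r$ and $a = r+j$ one has $\sigma_i \ge \sigma_r > \sigma_{r+1}\ge \sigma_{r+j}$), and this inversion is the source of the denominators $\sigma_i \pm \sigma_{r+j}$.

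Next I would apply the product rule,
\begin{equation*}
\nabla_Y P_{\mathcal{M}_r}(X) = \sum_{i=1}^r\left(\dot\sigma_i\, u_i v_i^\top + \sigma_i\, \dot u_i\, v_i^\top + \sigma_i\, u_i\, \dot v_i^\top\right),
\end{equation*}
and expand it in the orthonormal block basis $\{u_a v_b^\top\}$. Because the outer sum runs only over $i \le r$, the result has no component in the block where both indices exceed $r$; this re-expresses the fact that $t \mapsto P_{\mathcal{M}_r}(X(t))$ is a curve on $\mathcal{M}_r$, so its velocity lies in $T_A\mathcal{M}_r$ with $A := P_{\mathcal{M}_r}(X)$. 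Abbreviating $T_A := T_{\mathcal{M}_r}(A)$, I would check two things. In the blocks with both indices at most $r$, using $\nu_{ia} = -\nu_{ai}$ and the $2\times 2$ system, the $(a,i)$ component collapses to exactly $a_{ai}$, so this part reproduces $P_{T_A}(Y)$ verbatim. In the cross blocks coupling a top index $i \le r$ to a trailing index $r+j$, only the single terms $\sigma_i\omega_{r+j,i}$ and $\sigma_i\nu_{r+j,i}$ survive; solving the system and passing to the symmetric/antisymmetric basis $\Phi^{\pm}_{i,r+j} = \tfrac{1}{\sqrt 2}(u_{r+j}v_i^\top \pm u_i v_{r+j}^\top)$ gives a coefficient $\sigma_i/(\sigma_i \mp \sigma_{r+j})$ on $\Phi^{\pm}$. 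Since $\Phi^\pm_{i,r+j}$ already lie in $T_A$, the baseline contribution of $P_{T_A}(Y)$ along them is $\langle Y, \Phi^\pm\rangle$; subtracting this from $\sigma_i/(\sigma_i\mp\sigma_{r+j})$ leaves precisely $\pm\,\sigma_{r+j}/(\sigma_i \mp \sigma_{r+j})$, which is the claimed correction.

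The conceptual content is thus standard, and the genuine work—the main obstacle—is entirely bookkeeping: keeping the antisymmetry constraints straight, correctly pairing the $(i,r+j)$ and $(r+j,i)$ blocks when passing to $\Phi^{\pm}$, and recognizing that $\Phi^{\pm}_{i,r+j}$ are tangent vectors, so that the first term of the formula already supplies the coefficient $1$ that must be split off from the raw coefficient $\sigma_i/(\sigma_i\mp\sigma_{r+j})$. A secondary but essential point is the rigorous justification of differentiability rather than a merely formal expansion; as noted, this follows from the spectral gap via the resolvent/functional-calculus argument, which in fact shows the truncation map is real-analytic wherever the gap $\sigma_r > \sigma_{r+1}$ persists.
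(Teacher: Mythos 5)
Your proposal is correct, and the first thing to note is that the paper itself offers no proof to compare against: its ``proof'' of this theorem is the single line ``Refer to Theorem 25 in \cite{FlorianPierre}.'' So your derivation fills in what the paper delegates to the literature. I verified your bookkeeping: with $a_{pq}=u_p^\top Y v_q$, the top-left blocks indeed collapse to $a_{ab}$ directly from the single equation $\sigma_b\omega_{ab}-\sigma_a\nu_{ab}=a_{ab}$ (no inversion of the $2\times 2$ system is needed there, which conveniently sidesteps coincident singular values among the top $r$); in the cross blocks, solving the system gives coefficients $\sigma_i(\sigma_i a_{(r+j)i}+\sigma_{r+j}a_{i(r+j)})/(\sigma_i^2-\sigma_{r+j}^2)$ and its mirror, which in the $\Phi^{\pm}_{i,r+j}$ basis are $\frac{\sigma_i}{\sigma_i\mp\sigma_{r+j}}\langle Y,\Phi^{\pm}_{i,r+j}\rangle$, and subtracting the coefficient $1$ supplied by $P_{T_{\mathcal{M}_r}(A)}(Y)$ leaves exactly $\pm\sigma_{r+j}/(\sigma_i\mp\sigma_{r+j})$, including the sign of the $\Phi^-$ term. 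Methodologically your route differs from the cited source: Feppon and Lermusiaux obtain the formula geometrically, through the Weingarten map (second fundamental form) of the fixed-rank manifold, which is why the statement calls $\Phi^{\pm}_{i,r+j}$ principal directions with curvatures tied to $\sigma_i\mp\sigma_{r+j}$; your direct first-order perturbation of the SVD is more elementary and self-contained, at the cost of obscuring that geometric interpretation and its generalization to projections onto other embedded manifolds.

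One point to tighten: writing $\dot\sigma_i,\dot u_i,\dot v_i$ termwise presumes the top $r$ singular values are simple, whereas the hypothesis only guarantees the gap $\sigma_r>\sigma_{r+1}$. Your resolvent/spectral-projector remark gives differentiability of the truncation under the gap alone, but to get the stated formula at points with repeated singular values you should add the (easy) closing step: matrices with all distinct singular values are dense in the gap region, and both sides of the formula are continuous there --- the cross-block corrections involve only the nonvanishing differences $\sigma_i-\sigma_{r+j}$ with $i\le r<r+j$, and sums of $\langle Y,\Phi^{\pm}\rangle\Phi^{\pm}$ over an orthonormal basis of a repeated trailing singular subspace are basis-independent --- so the identity extends by continuity of the derivative. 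This is a routine density argument, not a gap in the approach, but it belongs in a complete write-up.
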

\begin{proof}
Refer to Theorem 25 in \cite{FlorianPierre}. 
\end{proof}


We are now ready to establish the convergence of Algorithm~\ref{alg1} under a sufficient condition. 
\begin{theorem}
\label{convergencethm}
Assume $T_{\mathcal{A}_{\Omega}}(M)\cap T_{\mathcal{M}_r}(M) = \{0\}$. Then Algorithm~\ref{alg1} converges to $M$ 
locally at a linear rate, i.e. there exists a neighborhood $V$ around $M$ such that if $X_0 \in V$, then there 
exists a positive constant $c < 1$ such that 
\begin{equation}
\label{lineconv}
\norm{X_k-M} < c^k\norm{X_0-M},
\end{equation} 
where $X_k$ is the $k$th iteration from Algorithm 1. 
\end{theorem}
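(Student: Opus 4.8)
The plan is to regard one full sweep of Algorithm~\ref{alg1} as a single map
\[
F := P_{\mathcal{A}_\Omega}\circ P_{\mathcal{M}_r},\qquad X_{k+1}=F(X_k),
\]
and to show that $M$ is a locally attracting fixed point of $F$ whose Jacobian has operator norm strictly below one. Since $M\in\mathcal{M}_r$ we have $P_{\mathcal{M}_r}(M)=M$, and since $M\in\mathcal{A}_\Omega$ we have $P_{\mathcal{A}_\Omega}(M)=M$, so $F(M)=M$. The whole argument then reduces to computing $\nabla F(M)$ and bounding its norm by means of Lemma~\ref{gradientofproj} and Lemma~\ref{lemma2}.

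First I would compute $\nabla F(M)$ by the chain rule. By Lemma~\ref{gradientofproj}, $P_{\mathcal{M}_r}$ is single-valued and differentiable in a neighborhood of $M$ with $\nabla P_{\mathcal{M}_r}(M)=P_{T_{\mathcal{M}_r}(M)}$; here it is worth recording that every curvature correction term in~\eqref{key2} carries a factor $\sigma_{r+j}$, which vanishes at $M$ because $M$ has rank exactly $r$, so the differential collapses to the clean tangent-space projection. Next, because $\mathcal{A}_\Omega$ is an affine subspace, the projection $P_{\mathcal{A}_\Omega}$ is itself an affine map, hence smooth everywhere with constant derivative equal to $P_{T_{\mathcal{A}_\Omega}(M)}$, the projection onto its linear part $\{X:\mathcal{P}_\Omega(X)=0\}$. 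Composing and using $P_{\mathcal{M}_r}(M)=M$, the chain rule gives
\[
\nabla F(M)=P_{T_{\mathcal{A}_\Omega}(M)}\,P_{T_{\mathcal{M}_r}(M)}.
\]

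Now I would apply Lemma~\ref{lemma2} with $L_1=T_{\mathcal{M}_r}(M)$ and $L_2=T_{\mathcal{A}_\Omega}(M)$. The hypothesis $T_{\mathcal{A}_\Omega}(M)\cap T_{\mathcal{M}_r}(M)=\{0\}$ is precisely $L_1\cap L_2=\{0\}$, so the lemma gives $\norm{\nabla F(M)}=\norm{P_{T_{\mathcal{A}_\Omega}(M)}P_{T_{\mathcal{M}_r}(M)}}<1$. To upgrade this single-point spectral bound to a genuine contraction, set $c_0:=\norm{\nabla F(M)}$ and fix $c$ with $c_0<c<1$. Since $\nabla F$ is continuous near $M$, there is a closed Frobenius ball $V=\{X:\norm{X-M}\le\delta\}$ on which $\norm{\nabla F(X)}\le c'$ for some $c'<c$. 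Integrating $\nabla F$ along the segment from $M$ to any $X\in V$ (which stays in the convex set $V$) and using $F(M)=M$ yields $\norm{F(X)-M}\le c'\norm{X-M}$; in particular $\norm{F(X)-M}\le c'\delta<\delta$, so $F(V)\subset V$. Hence if $X_0\in V$ every iterate remains in $V$ and, by induction, $\norm{X_k-M}\le (c')^k\norm{X_0-M}<c^k\norm{X_0-M}$ whenever $X_0\ne M$, which is exactly~\eqref{lineconv}.

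I expect the main obstacle to lie in the second step rather than in the final contraction estimate: one must rigorously justify that $F$ is differentiable at $M$ and that the chain rule applies, which rests entirely on the single-valuedness and differentiability of the rank-$r$ projection supplied by Lemma~\ref{gradientofproj}, together with the observation that the curvature terms in~\eqref{key2} drop out precisely because $M$ lies on $\mathcal{M}_r$ (so all trailing singular values vanish). Once $\nabla F(M)$ is identified as the composition of the two tangent-space projections, Lemma~\ref{lemma2} delivers the strict bound immediately, and the passage from the pointwise Jacobian estimate to an invariant contracting neighborhood is routine.
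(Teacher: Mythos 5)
Your proposal is correct and follows essentially the same route as the paper: both identify one sweep as $f=P_{\mathcal{A}_\Omega}\circ P_{\mathcal{M}_r}$, compute $\nabla f(M)=P_{T_{\mathcal{A}_\Omega}(M)}P_{T_{\mathcal{M}_r}(M)}$ via Lemma~\ref{gradientofproj} and the affineness of $\mathcal{A}_\Omega$, invoke Lemma~\ref{lemma2} to get $\norm{\nabla f(M)}<1$, and conclude by an invariant-ball induction. The only (harmless) divergence is at the endgame: you assume continuity of $\nabla f$ near $M$ and integrate along a segment, which requires a little extra justification from the explicit derivative formula~\eqref{key2}, whereas the paper gets the contraction $\norm{f(X)-M}\le \frac{1+\norm{\nabla f(M)}}{2}\norm{X-M}$ using only differentiability at the single point $M$ with $\epsilon=\frac{1-\norm{\nabla f(M)}}{2}$.
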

\begin{proof}
For notational convenience, let 
	$$
	f(X) := P_{\mathcal{A}_{\Omega}}(P_{\mathcal{M}_r}(X)).
	$$
Note that $\mathcal{A}_\Omega$ is an affine space, the gradient $\nabla P_{\mathcal{A}_\Omega}$ of
the projection $P_{\mathcal{A}_\Omega}$ is the projection onto the tangent space of the affine space
$\mathcal{A}_\Omega$.  
By Lemma \ref{gradientofproj} and chain rule, we have
$$
(\nabla f) (X) = P_{T_{\mathcal{A}_{\Omega}}(M)}(\operatorname{P}_{T_{\mathcal{M}_r}(M)}(X)).
$$
as $T_{\mathcal{A}_{\Omega}}(M)=T_{\mathcal{A}_{\Omega}}(X)$ for all $X$. 
	
	Now from the definition of differentiability of $f$ at $M$, we have
	$$
	\lim\limits_{X \rightarrow M}\frac{\norm{f(X)-f(M) - \nabla f (M) \cdot (X-M)}}{\norm{X-M}} = 0.
	$$
Hence, there exist an open ball $V$, say a ball $V = B_{r_0}(M)$ centered at $M$ of radius $r_0$ around $M$ 
such that, for all $X \in V$ 
$$
		\frac{\norm{f(X)-f(M) - \nabla f(M) \cdot (X-M)}}{\norm{X-M}} < \epsilon,
$$
where $\epsilon = \frac {1-\norm{\nabla f}}{2}>0$. Using our hypothesis and Lemma \ref{lemma2}, we 
have 
$\norm{\nabla f(M)} = \norm{P_{T_{\mathcal{A}_{\Omega}}(M)}\operatorname{P}_{T_{\mathcal{M}_r}(M)}}<1$.   
Therefore, for all $X \in V$, we use  $M=f(M)$ to have  
\begin{align*}
		\norm{f(X)-M} &= \norm{f(X)-f(M)} \cr
& \leq \norm{f(X)-f(M) - \nabla f \cdot (X-M)} + \norm{\nabla f(M) \cdot (X-M)}\\
		& < \epsilon \norm{X-M} + \norm{\nabla f(M)} \norm{(X-M)}\\
		& = (\epsilon + \norm{\nabla f(M)})\norm{X-M}\\
		& \leq \frac {1+ \norm{\nabla f(M)}}{2}\norm{X-M}.
\end{align*}
where $\frac {1+ \norm{\nabla f(M)}}{2} < 1$ since $\norm{\nabla f (M)}<1$ as discussed above. 

Setting $c = \frac {1+ \norm{\nabla f(M)}}{2}<1$, we can rewrite the above inequality as follows: 
	\begin{equation}
		\norm{f(X)-M} < c \norm{X-M} \text{ for all } X \in V.
	\end{equation}

Hence, if $X_k \in V =  B_{r_0}(M)$, we use $X_{k+1}=f(X_k)$ to have 	
	$$
	\norm{X_{k+1}-M} = \norm{f(X_k)-M} < c \norm{X_k-M} \leq r_0
	$$
	which implies $X_{k+1} \in V =  B_{r_0}(M)$. 
	So, if the initial guess $X_0 \in V$, we have, by induction,
	$$
	X_k \in V \text{ for all } k
	$$
	and
	$$
	\norm{X_k-M} \leq c^k\norm{X_0-M}.
	$$
We have thus completed the proof. 
\end{proof}

We will now derive certain equivalent conditions for hypothesis of the above theorem viz. $T_{\mathcal{A}_{\Omega}}(M)\cap T_{\mathcal{M}_r}(M) = \{0\}$. 
Let us recall the following property which is known in the literature. For convenience, we include a proof. 
\begin{lemma}
\label{key1}
The tangent space $T_{\mathcal{M}_r}(M)$ has an explicit description as follows: 
\begin{equation}
\label{eq:key1}
T_{\mathcal{M}_r}(M) = \left\{XM + MY \mid X \in \mathbb{R}^{n \times n} \text{ and } Y \in \mathbb{R}^{n \times n} \right\}.
\end{equation}
\end{lemma}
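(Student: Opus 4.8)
The plan is to prove the two inclusions separately: the nontrivial containment $\{XM + MY\} \subseteq T_{\mathcal{M}_r}(M)$ can be obtained by exhibiting explicit curves through $M$, after which the reverse inclusion follows from a dimension count. First I would observe that the right-hand side of \eqref{eq:key1} is manifestly a linear subspace of $\mathbb{R}^{n\times n}$, being the sum of the two linear images $\{XM : X \in \mathbb{R}^{n\times n}\}$ and $\{MY : Y \in \mathbb{R}^{n\times n}\}$, so it makes sense to compare it with the tangent space as a subspace.

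To show that every element $XM + MY$ is tangent to $\mathcal{M}_r$ at $M$, I would use the curve $\gamma(t) = e^{tX} M e^{tY}$. Since $e^{tX}$ and $e^{tY}$ are invertible for every $t$, left and right multiplication by them preserves rank, so $\gamma(t) \in \mathcal{M}_r$ for all $t$, with $\gamma(0) = M$. Differentiating and evaluating at $t=0$ gives $\gamma'(0) = XM + MY$. Thus each $XM + MY$ is the velocity at $M$ of a smooth curve lying entirely in $\mathcal{M}_r$, which is precisely the statement $XM + MY \in T_{\mathcal{M}_r}(M)$. This establishes $\{XM + MY\} \subseteq T_{\mathcal{M}_r}(M)$.

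For the reverse inclusion I would argue by comparing dimensions. Writing the reduced SVD $M = U\Sigma V^\top$ with $U, V \in \mathbb{R}^{n\times r}$ having orthonormal columns and $\Sigma \in \mathbb{R}^{r\times r}$ diagonal and invertible, I note that as $X$ ranges over $\mathbb{R}^{n\times n}$ the product $XU\Sigma$ ranges over all of $\mathbb{R}^{n\times r}$ (because $U$ has full column rank and $\Sigma$ is invertible), so $\{XM\} = \{WV^\top : W \in \mathbb{R}^{n\times r}\}$, a subspace of dimension $nr$; symmetrically $\{MY\} = \{UZ^\top : Z \in \mathbb{R}^{n\times r}\}$, also of dimension $nr$. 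Their intersection consists of the matrices $N$ with $\operatorname{col}(N)\subseteq\operatorname{range}(U)$ and $\operatorname{row}(N)\subseteq\operatorname{range}(V)$; using $UU^\top N = N = NVV^\top$ one sees these are exactly the matrices $UAV^\top$ with $A \in \mathbb{R}^{r\times r}$, a space of dimension $r^2$. By the inclusion–exclusion formula for subspace dimensions, $\dim\{XM + MY\} = nr + nr - r^2 = 2nr - r^2$.

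Finally, since $\mathcal{M}_r$ is a smooth manifold of dimension $2nr - r^2$, its tangent space $T_{\mathcal{M}_r}(M)$ has that same dimension. Combining the containment $\{XM + MY\} \subseteq T_{\mathcal{M}_r}(M)$ with the fact that both subspaces have dimension $2nr - r^2$ forces them to coincide, which is \eqref{eq:key1}. The step I expect to demand the most care is the dimension bookkeeping, specifically the verification that $\{XM\}\cap\{MY\}$ is exactly the $r^2$-dimensional space $\{UAV^\top\}$, together with the invocation of the standard fact that $\mathcal{M}_r$ is a manifold of dimension $2nr - r^2$; this last fact is what legitimizes closing the inclusion by counting dimensions.
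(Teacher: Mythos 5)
Your proof is correct, but it reaches the equality by a genuinely different route than the paper. The paper proves \emph{both} inclusions with curves: for $T_{\mathcal{M}_r}(M)\subseteq\{XM+MY\}$ it writes an arbitrary smooth curve through $M$ in factored form $\gamma(t)=X(t)Y(t)^\top$ with full-column-rank factors near $t=0$ and applies the product rule, and for the reverse inclusion it exhibits explicit affine curves $X(t)=(I+tA)X_0$, $Y(t)=Y_0+tB^\top Y_0$ whose velocity at $0$ is $AM+MB$. You handle the containment $\{XM+MY\}\subseteq T_{\mathcal{M}_r}(M)$ in the same spirit but with the cleaner curves $e^{tX}Me^{tY}$, which lie in $\mathcal{M}_r$ for \emph{all} $t$ rather than just near $0$; and you replace the paper's other inclusion entirely by a dimension count, verifying $\dim\{XM+MY\}=nr+nr-r^2=2nr-r^2$ (your identification of $\{XM\}\cap\{MY\}$ with $\{UAV^\top : A\in\mathbb{R}^{r\times r}\}$ is correct) and invoking $\dim T_{\mathcal{M}_r}(M)=\dim\mathcal{M}_r=2nr-r^2$. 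The trade-off is instructive: the paper's curve-factorization step quietly assumes that every smooth curve in $\mathcal{M}_r$ through $M$ admits a smooth factorization $X(t)Y(t)^\top$ (true locally, but not justified there), and your dimension argument sidesteps that subtlety completely; in exchange you must import the standard fact that $\mathcal{M}_r$ is a smooth manifold of dimension $2nr-r^2$, which the paper itself uses elsewhere (citing Proposition 12.2 of Harris, and again in the proof of Theorem~\ref{LVmain}), so nothing circular is introduced. Both arguments are sound; yours is arguably tighter, while the paper's is self-contained modulo the factorization claim.
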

\begin{proof}
First recall that the tangent space $T_{\mathcal{M}_r}(M)$ to a manifold $\mathcal{M}_r$ at a point $M$ is the linear space spanned by all the tangent vectors at $0$ to smooth curves $\gamma: \mathbb{R} \rightarrow \mathcal{M}_r$ such that $\gamma(0) = M$.

Now let $M \in \mathcal{M}_r$ be a $n \times n$ matrix of rank $r$. We can write $M = X_0Y_0^\top $ where $X_0,Y_0 \in \mathbb{R}^{n \times r}$ and both $X_0$ and $Y_0$ have full column rank. This is possible because $M$ has exactly rank $r$.

Let $\gamma(t) = X(t)Y(t)^\top $ be a smooth curve such that $X(0) = X_0$ and $Y(0)=Y_0$. Hence, 
$\gamma(0) = X_0Y_0^\top = M$. Since $X_0$ and $Y_0$ have full column rank, $X_0$ and $Y_0$ have a $r\times r$ 
minor that does not vanish. Since nonvanishing of a minor is an open condition, 
there exist an open neighbourhood of $M$ to which if we restrict the curve $\gamma$, we can assume $X(t)$ and 
$Y(t)$ have full column rank. In other words, we can assume, without loss of generality, that $X(t)^\top X(t)$ 
and $Y(t)^\top Y(t)$ are invertible $r \times r$ matrices for all $t$.

By product rule, we obtain
\begin{align*}
	\dot{\gamma}(0) & = \dot{X}(0)Y(0)^\top  +  X(0)\dot{Y}(0)^\top \\
	& =  \dot{X}(0)Y_0^\top  + X_0\dot{Y}(0)^\top \\ 
	& = \dot{X}(0)(X_0^\top X_0)^{-1}(X_0^\top X_0)Y_0^\top  + X_0(Y_0^\top Y)(Y_0^\top Y_0)^{-1}\dot{Y}(0)^\top \\
	& = \left(\dot{X}(0)(X_0^\top X_0)^{-1}X_0^\top \right)(X_0 Y_0^\top ) + (X_0Y_0^\top )\left(Y_0(Y_0^\top Y)^{-1}\dot{Y}(0)^\top \right)\\
	& = \left(\dot{X}(0)(X_0^\top X_0)^{-1}X_0^\top \right)M + 
M\left(Y_0(Y_0^\top Y_0)^{-1}\dot{Y}(0)^\top \right)\\
	& \in  \left\{XM + MY \mid X \in \mathbb{R}^{n \times n} \text{ and } Y \in \mathbb{R}^{n \times n} \right\}.
\end{align*}

Now to prove the reverse inclusion, 
 let $AM + MB \in  \left\{XM + MY \mid X \in \mathbb{R}^{n \times n} \text{ and } Y \in \mathbb{R}^{n \times n} \right\}$. Consider the smooth curve $\gamma(t) = X(t)Y(t)^\top $ defined by  
$$
X(t) = t(AX_0) + X_0
$$
and 
$$
Y(t) = t\left((Y_0B)^\top \right) + Y_0.
$$
An easy computation shows that $\gamma(0) = M$ and $\dot{\gamma}(0) = AM + MB$.
Hence we get the equality

\begin{equation*}
T_{\mathcal{M}_r}(M) = \left\{XM + MY \mid X \in \mathbb{R}^{n \times n} \text{ and } Y \in \mathbb{R}^{n \times n} \right\}
\end{equation*}
This completes the proof. 
\end{proof} 
One can consider $T_{\mathcal{M}_r}(M)$ as a linear space in $\mathbb{R}^{n^2}$ by rewriting it as 
$$
T_{\mathcal{M}_r}(M) \cong \operatorname{Range}(T_M) = \left\{ T_M \cdot \begin{bmatrix}
{(X^1)}^\top \\
\vdots\\
{(X^n)}^\top \\
Y_1\\
\vdots\\
Y_n\\
\end{bmatrix} \mid  X \in \mathbb{R}^{n \times n} \text{ and } Y \in \mathbb{R}^{n \times n}  \right\} 
$$
where $T_M$ is a block matrix of size $n^2 \times 2n^2$ consisting of $2n^3$ blocks of size $1 \times n$, $X^i$ and $X_j$ denotes the $i^{th}$ row and $j^{th}$ column of a matrix $X$ respectively.
%

Explicitly, $T_M$ would take the form
	\[
		T_M = \left[
		\begin{array}{c|c|c|c|c|c|c|c|c|c|c|c|c|c|c}
		M_1^\top  & 0 & \cdots & 0 & \cdots & \cdots & \cdots & 0 & M^1 & 0 & \cdots & \cdots & \cdots & \cdots & 0\\ 
		\hline
		\vdots & \vdots & \vdots & \vdots & \vdots & \vdots & \vdots & \vdots &\vdots & \vdots & \vdots &\vdots &\vdots &\vdots &\vdots \\  
		\hline
		0 & 0 & \cdots & 0 & M_j^\top  & 0 & \cdots & 0 & 0 & \cdots & 0 & M^i & 0 & \cdots & 0\\ 
		\hline
		\vdots & \vdots & \vdots & \vdots & \vdots & \vdots & \vdots & \vdots &\vdots & \vdots & \vdots &\vdots &\vdots &\vdots &\vdots \\
		\end{array}		
		\right]
	\]
where the each row corresponds to each index in $\{1,2,\cdots, n\}\times \{1,2,\cdots, n\}$. 

Let $T^{\Omega}_M$ and $T^{\Omega^c}_M$ denote the matrix obtained from $T_M$ by choosing the rows 
corresponding to $\Omega$ and $\Omega^c$, respectively.

\begin{example}
Suppose $M = \begin{bmatrix}
	1 & 4\\
	2 & 8\\
	\end{bmatrix}$ and $\Omega = \{(1,2),(2,1) \}$. Then
	$$
	T^{\Omega}_M = \left[
	\begin{array}{cc|cc|cc|cc}
	4 & 8 & 0 & 0 & 0 & 0 & 1 & 4\\
	\hline
	0 & 0 & 1 & 2 & 2 & 8 & 0 & 0\\ 
	\end{array}
	\right]
	$$
	
	$$
	T^{\Omega^c}_M = \left[
	\begin{array}{cc|cc|cc|cc}
	1 & 2 & 0 & 0 & 1 & 4 & 0 & 0\\
	\hline
	0 & 0 & 4 & 8 & 0 & 0 & 2 & 8\\ 
	\end{array}
	\right]
	$$
	and
	$$
	T_M = \left[
	\begin{array}{cc|cc|cc|cc}
		4 & 8 & 0 & 0 & 0 & 0 & 1 & 4\\
	\hline
	0 & 0 & 1 & 2 & 2 & 8 & 0 & 0\\
	\hline
	1 & 2 & 0 & 0 & 1 & 4 & 0 & 0\\
	\hline
	0 & 0 & 4 & 8 & 0 & 0 & 2 & 8\\ 
	\end{array}
	\right]
	$$
	
\end{example}

\begin{example}
	Suppose $M = \begin{bmatrix}
	-3 & -1 & -4\\
	9 & 3 & 12\\
	6 & 2 & 8\\
	\end{bmatrix}$ and $\Omega = \{(1,1),(1,3),(2,2),(3,1) \}$, Then
	$$
	T^{\Omega}_M = \left[
	\begin{array}{ccc|ccc|ccc|ccc|ccc|ccc}
	-3 & 9 & 6 & 0 & 0 & 0 & 0 & 0 & 0 & -3 & -1 & -4 & 0 & 0 & 0 & 0 & 0 & 0\\
	\hline
	-4 & 12 & 8 & 0 & 0 & 0 & 0 & 0 & 0 & 0 & 0 & 0 & 0 & 0 & 0 & -3 & -1 & -4\\
	\hline
	0 & 0 & 0 & -1 & 3 & 2 & 0 & 0 & 0 & 0 & 0 & 0  & 9 & 3 & 12 & 0 & 0 & 0\\
	\hline
	0 & 0 & 0 & 0 & 0 & 0 & -3 & 9 & 6 & 6 & 2 & 8 & 0 & 0 & 0 & 0 & 0 & 0\\
	\end{array}
	\right]
	$$
	
		$$
	T^{\Omega^c}_M = \left[
	\begin{array}{ccc|ccc|ccc|ccc|ccc|ccc}
	-1 & 3 & 2 & 0 & 0 & 0 & 0 & 0 & 0 & 0 & 0 & 0 & -3 & -1 & -4 & 0 & 0 & 0\\
	\hline
	0 & 0 & 0 & -3 & 9 & 6 & 0 & 0 & 0 & 9 & 3 & 12 & 0 & 0 & 0 & 0 & 0 & 0\\
	\hline
	0 & 0 & 0 & -4  & 12 & 8 & 0 & 0 & 0 & 0 & 0 & 0  & 0 & 0 & 0 & 9 & 3 & 12\\
	\hline
	0 & 0 & 0 & 0 & 0 & 0 & -1 & 3 & 2 & 0 & 0 & 0 & 6 & 2 & 8 & 0 & 0 & 0\\
	\hline
	0 & 0 & 0 & 0 & 0 & 0 & -4 & 12 & 8 & 0 & 0 & 0 & 0 & 0 & 0 & 6 & 2 & 8\\
	\end{array}
	\right]
	$$
	
	and
	$$
	T_M = \left[
	\begin{array}{ccc|ccc|ccc|ccc|ccc|ccc}
	-3 & 9 & 6 & 0 & 0 & 0 & 0 & 0 & 0 & -3 & -1 & -4 & 0 & 0 & 0 & 0 & 0 & 0\\
	\hline
	-4 & 12 & 8 & 0 & 0 & 0 & 0 & 0 & 0 & 0 & 0 & 0 & 0 & 0 & 0 & -3 & -1 & -4\\
	\hline
	0 & 0 & 0 & -1 & 3 & 2 & 0 & 0 & 0 & 0 & 0 & 0  & 9 & 3 & 12 & 0 & 0 & 0\\
	\hline
	0 & 0 & 0 & 0 & 0 & 0 & -3 & 9 & 6 & 6 & 2 & 8 & 0 & 0 & 0 & 0 & 0 & 0\\
	\hline
	-1 & 3 & 2 & 0 & 0 & 0 & 0 & 0 & 0 & 0 & 0 & 0 & -3 & -1 & -4 & 0 & 0 & 0\\
	\hline
	0 & 0 & 0 & -3 & 9 & 6 & 0 & 0 & 0 & 9 & 3 & 12 & 0 & 0 & 0 & 0 & 0 & 0\\
	\hline
	0 & 0 & 0 & -4  & 12 & 8 & 0 & 0 & 0 & 0 & 0 & 0  & 0 & 0 & 0 & 9 & 3 & 12\\
	\hline
	0 & 0 & 0 & 0 & 0 & 0 & -1 & 3 & 2 & 0 & 0 & 0 & 6 & 2 & 8 & 0 & 0 & 0\\
	\hline
	0 & 0 & 0 & 0 & 0 & 0 & -4 & 12 & 8 & 0 & 0 & 0 & 0 & 0 & 0 & 6 & 2 & 8\\
	\end{array}
	\right]
	$$
\end{example}

Next we need 
\begin{lemma} 
\label{key2}
The tangent space $T_{\mathcal{A}_{\Omega}}(M)$ at $M$ can be given explicitly as follows. 
\begin{equation}
T_{\mathcal{A}_{\Omega}}(M) = \left\{X \in \mathbb{R}^{n \times n} \mid P_\Omega(X) = 0  \right\}.
\end{equation}
\end{lemma}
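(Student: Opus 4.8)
The plan is to exploit the fact that $\mathcal{A}_\Omega$ is an \emph{affine} subspace of $\mathbb{R}^{n\times n}$, so that its tangent space at any of its points coincides with its underlying direction (linear) subspace. The first step is to record that the sampling operator $\mathcal{P}_\Omega$ is linear: it simply zeroes out every entry whose index lies outside $\Omega$. Hence from the definition $\mathcal{A}_\Omega = \{X \mid \mathcal{P}_\Omega(X-M) = 0\}$ and linearity of $\mathcal{P}_\Omega$ we may write $\mathcal{A}_\Omega = M + L$, where
$$
L := \{X \in \mathbb{R}^{n\times n} \mid \mathcal{P}_\Omega(X) = 0\}
$$
is the kernel of $\mathcal{P}_\Omega$, a genuine linear subspace. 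This reduces the lemma to the standard assertion that the tangent space of $M + L$ at $M$ equals $L$.

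To prove this cleanly I would use the curve definition of the tangent space already invoked in the proof of Lemma \ref{key1}. For the inclusion $T_{\mathcal{A}_\Omega}(M) \subseteq L$, take any smooth curve $\gamma : \mathbb{R} \to \mathcal{A}_\Omega$ with $\gamma(0) = M$. Membership in $\mathcal{A}_\Omega$ gives $\mathcal{P}_\Omega(\gamma(t) - M) = 0$ for all $t$; differentiating at $t=0$ and using linearity (hence continuity/commutation with $\tfrac{d}{dt}$) of $\mathcal{P}_\Omega$ yields $\mathcal{P}_\Omega(\dot{\gamma}(0)) = 0$, so $\dot{\gamma}(0) \in L$. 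For the reverse inclusion $L \subseteq T_{\mathcal{A}_\Omega}(M)$, pick any $X \in L$ and consider the straight-line curve $\gamma(t) = M + tX$. Then $\mathcal{P}_\Omega(\gamma(t) - M) = t\,\mathcal{P}_\Omega(X) = 0$, so $\gamma$ lies in $\mathcal{A}_\Omega$, satisfies $\gamma(0) = M$, and has $\dot{\gamma}(0) = X$; thus $X$ is a tangent vector. Combining the two inclusions gives $T_{\mathcal{A}_\Omega}(M) = L$, which is exactly the claimed description.

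There is no substantial obstacle here; the only point needing care is the conceptual one that for an affine space the tangent space is the associated direction subspace rather than the affine set itself, and that $\mathcal{P}_\Omega$ being linear is precisely what makes this direction subspace equal to $\{X \mid \mathcal{P}_\Omega(X) = 0\}$ (and in particular independent of the base point $M$, a fact already used silently in the proof of Theorem \ref{convergencethm} when asserting $T_{\mathcal{A}_\Omega}(M) = T_{\mathcal{A}_\Omega}(X)$ for all $X$). I would keep the write-up to the two short curve computations above, since everything else is immediate from linearity.
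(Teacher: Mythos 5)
Your proof is correct and follows essentially the same route as the paper: both reduce the claim to the observation that $\mathcal{A}_\Omega$ is a translate of the linear space $L=\{X \mid \mathcal{P}_\Omega(X)=0\}$ (the paper translates by $\mathcal{P}_\Omega(M)$, you by $M$ itself, which is equivalent since $M-\mathcal{P}_\Omega(M)\in L$), and then invoke the fact that the tangent space of a translated linear space is that linear space. The only difference is that you verify this last fact explicitly with the two curve computations, whereas the paper simply asserts it; your version is slightly more self-contained but not a different argument.
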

\begin{proof}
Recall that $$
\mathcal{A}_M := \left\{X \mid P_\Omega(X-M) = 0 \right\}.
$$
Since $P_\Omega(X-M) = P_\Omega(X) - P_\Omega(M) = P_\Omega(X) - P_\Omega(P_\Omega(M)) = P_\Omega(X -P_\Omega(M))$, we get that the set $\mathcal{A}_\Omega$ is a translation of the linear space $\left\{X \in \mathbb{R}^{n \times n} \mid P_\Omega(X) = 0  \right\}$ by $P_\Omega(M)$, i.e.
$$
\mathcal{A}_\Omega = \left\{X \in \mathbb{R}^{n \times n} \mid P_\Omega(X) = 0  \right\} + P_\Omega(M)
$$
Hence we have that the tangent space of $\mathcal{A}_\Omega$ at $M$ is equal to the tangent space of the vector space 
$\left\{X \in \mathbb{R}^{n \times n} \mid P_\Omega(X) = 0  \right\}$ at $M-P_\Omega(M)$. But the tangent space of a vector space at any point is the vector space itself. Hence the result follows. 
\end{proof}
With the above preparation, we have another main result in this section. 
\begin{theorem}
\label{LVmain}
The following statements are equivalent:\\
\begin{enumerate}
\item $T_{\mathcal{A}_{\Omega}}(M)\cap T_{\mathcal{M}_r}(M) = \{0\}$
\vspace{0.1in}
\item $\operatorname{Rowspace}\left(T^{\Omega^c}_M\right) \subseteq 
\operatorname{Rowspace}\left(T^{\Omega}_M\right)$ 
\vspace{0.1in}
\item $\operatorname{Rank}\left(T^{\Omega}_M \right) = 2nr-r^2$, 
where $r = \operatorname{Rank}(M)$\vspace{0.1in}
\item The matrix $V^{\Omega}(M)$ of size $\abs{\Omega} \times 
\abs{\Omega}$ defined by
\begin{equation}
\label{varghese}
V_{(i_1,j_1),(i_2,j_2)}^{\Omega}(M) = 
\begin{cases}
0 &  i_1 \neq i_2 \text{ and } j_1 \neq j_2\\
\langle M_{j_1},M_{j_2}  \rangle  & i_1 = i_2 \text{ and } j_1 \neq j_2\\
\langle M^{i_1},M^{i_2}  \rangle  & i_1 \neq i_2 \text{ and } j_1 = j_2\\
\norm{M^{i_1}}^2 + \norm{M_{j_1}}^2 & i_1 = i_2 \text{ and } j_1 = j_2
		\end{cases}
\end{equation}
has rank $2nr-r^2$, where $M_j$ stands for the $j$th column and $M^i$ for the $i$th row of $M$.
\end{enumerate}
\end{theorem}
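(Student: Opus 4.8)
The plan is to route all four statements through the single matrix $T_M$ of size $n^2 \times 2n^2$, whose range realizes $T_{\mathcal{M}_r}(M)$ inside $\mathbb{R}^{n^2}$, together with its row-subselections $T^{\Omega}_M$ and $T^{\Omega^c}_M$. The key preliminary observation is that, under the identification of $n \times n$ matrices with vectors in $\mathbb{R}^{n^2}$, the description $T_{\mathcal{A}_{\Omega}}(M) = \{X : P_\Omega(X) = 0\}$ established above makes $T_{\mathcal{A}_{\Omega}}(M)$ the coordinate subspace of vectors that vanish on $\Omega$. Hence a vector $v = T_M w \in T_{\mathcal{M}_r}(M)$ lies in $T_{\mathcal{A}_{\Omega}}(M)$ exactly when its $\Omega$-entries $T^{\Omega}_M w$ vanish, so the intersection $T_{\mathcal{A}_{\Omega}}(M) \cap T_{\mathcal{M}_r}(M)$ equals $\{ T_M w : w \in \ker(T^{\Omega}_M) \}$. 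Since such a $v$ already has zero $\Omega$-part, $v = 0$ is equivalent to $T^{\Omega^c}_M w = 0$.

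For the equivalence of (1) and (2), the remark above shows that the intersection is trivial if and only if $\ker(T^{\Omega}_M) \subseteq \ker(T^{\Omega^c}_M)$; applying the duality $\ker(A) = \operatorname{Rowspace}(A)^{\perp}$ together with the order-reversing property of orthogonal complements turns this into $\operatorname{Rowspace}(T^{\Omega^c}_M) \subseteq \operatorname{Rowspace}(T^{\Omega}_M)$, which is (2). For the equivalence of (2) and (3), note that $T_M$ is the row-stacking of $T^{\Omega}_M$ and $T^{\Omega^c}_M$, so $\operatorname{Rowspace}(T_M) = \operatorname{Rowspace}(T^{\Omega}_M) + \operatorname{Rowspace}(T^{\Omega^c}_M)$; statement (2) says precisely that the $\Omega^c$-rows add nothing, i.e. $\operatorname{Rank}(T^{\Omega}_M) = \operatorname{Rank}(T_M)$. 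Because $\operatorname{Range}(T_M) = T_{\mathcal{M}_r}(M)$, we have $\operatorname{Rank}(T_M) = \dim T_{\mathcal{M}_r}(M) = \dim \mathcal{M}_r = 2nr - r^2$, the standard dimension of the rank-$r$ determinantal manifold, so (2) is equivalent to $\operatorname{Rank}(T^{\Omega}_M) = 2nr - r^2$, which is (3).

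For the equivalence of (3) and (4), the crux is to recognize $V^{\Omega}(M)$ as the Gram matrix $T^{\Omega}_M (T^{\Omega}_M)^{\top}$. Reading the explicit block form of $T_M$, the row indexed by $(i,j)$ carries $M_j^{\top}$ in the $i$-th block of the $X$-part and $M^i$ in the $j$-th block of the $Y$-part. Dotting the rows indexed by $(i_1,j_1)$ and $(i_2,j_2)$ then yields a contribution $\langle M_{j_1}, M_{j_2} \rangle$ only when $i_1 = i_2$ (overlap in the $X$-blocks) and a contribution $\langle M^{i_1}, M^{i_2} \rangle$ only when $j_1 = j_2$ (overlap in the $Y$-blocks); summing over the four cases reproduces exactly the piecewise formula in (\ref{varghese}). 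Since a Gram matrix has the same rank as the matrix generating it, $\operatorname{Rank}(V^{\Omega}(M)) = \operatorname{Rank}(T^{\Omega}_M)$, and (3) is equivalent to (4).

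The main obstacle I anticipate is bookkeeping rather than conceptual: keeping the row/column and block indexing consistent while verifying the Gram-matrix identity for the equivalence of (3) and (4), and in particular remembering that the overlap in the $X$-part is controlled by the row index $i$ whereas the overlap in the $Y$-part is controlled by the column index $j$. The only non-elementary ingredient is the dimension formula $\dim \mathcal{M}_r = 2nr - r^2$, which I would simply cite or recall; the remaining steps are elementary linear algebra, namely the kernel/row-space duality and the equality of ranks of $A$ and $A A^{\top}$.
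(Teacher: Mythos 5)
Your proposal is correct and follows essentially the same route as the paper: the nullspace/rowspace duality for $(1)\iff(2)$, the identification $\operatorname{Rank}(T_M)=\dim T_{\mathcal{M}_r}(M)=2nr-r^2$ for $(2)\iff(3)$, and the Gram identity $V^{\Omega}(M)=T^{\Omega}_M\left(T^{\Omega}_M\right)^\top$ for $(3)\iff(4)$. The only difference is cosmetic: you verify the Gram-matrix entry computation explicitly, whereas the paper simply asserts the identity.
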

\begin{proof}
$(1)\iff (2)$ Note that the elements of $T_{\mathcal{A}_{\Omega}}(M)\cap T_{\mathcal{M}_r}(M)$ consists of matrices of the form $XM + MY$ such that the elements in positions $\Omega$ is zero by Lemmas~\ref{key1} 
and \ref{key2}. Hence, observing that $T_{\mathcal{M}_r}(M)$ can be considered as the range of $T_M$ 
and that the rows of $T_M$ correspond to each index in $\{1,2,\cdots,n\} \times \{1,2,\cdots,n\}$, we can conclude that $T_{\mathcal{A}_{\Omega}}(M)\cap T_{\mathcal{M}_r}(M) = \{0\}$ if and only if
$$
	\operatorname{NullSpace}\left(T^{\Omega}_M\right) \subseteq \operatorname{NullSpace}\left(T^{\Omega^c}_M\right)
	$$
	which is equivalent to
	$$
	\operatorname{NullSpace}\left(T^{\Omega}_M\right)^\perp \supseteq \operatorname{NullSpace}\left(T^{\Omega^c}_M\right)^\perp
	$$
	The result follows by noting that 
	$$
	\operatorname{Rowspace}\left(T^{\Omega^c}_M\right) = \operatorname{NullSpace}\left(T^{\Omega^c}_M\right)^\perp
	$$
	and
	$$
\operatorname{Rowspace}\left(T^{\Omega}_M\right) =\operatorname{NullSpace}\left(T^{\Omega}_M\right)^\perp.
	$$
	
$(2)\iff (3)$ 
We begin by recalling that dimension of a tangent space is equal to dimension of the manifold. So, $\dim(T_{\mathcal{M}_r}(M)) = 2nr-r^2$.
Now 
$$
2nr-r^2 = \dim(T_\mathcal{M}(M)) = \dim(Range(T_M)) = \operatorname{Rank}(T_M) = 
\operatorname{Rank}\left(\operatorname{Rowspace}\left(T_M\right)\right).
$$
Now the equivalence $(2)\iff (3)$ follows by recalling that $T^{\Omega}_M$ and $T^{\Omega^c}_M$ were obtained from $T_M$ by choosing the rows 
corresponding to $\Omega$ and $\Omega^c$, respectively
	
$(3)\iff (4)$ 
The equivalence follows from fact that $V^{\Omega}(M) = T^{\Omega}_M \left(T^{\Omega}_M\right)^\top $. 
Hence $\operatorname{Rank}(V^{\Omega}(M)) = \operatorname{Rank}(T^{\Omega}_M).$
\end{proof} 

In general, the rank of $V^\Omega(M)$ is less 
than or equal to $2nr-r^2$. The equality occurs when the tangent spaces intersect trivially.  
The following example is an illustration of the linear convergence of 
the error when the condition $T_{\mathcal{A}_{\Omega}}(M)\cap 
T_{\mathcal{M}_r}(M) = \{0\}$ is satisfied.

\begin{example}
We find a  $15 \times 15$ matrix $M$ of rank 2 which has $28\%$ of entries missing. 
A straightforward computation shows that $\operatorname{Rank}(V^\Omega(M)) = 2nr-r^2$. Hence, $M$ satisfies the condition $T_{\mathcal{A}_{\Omega}}(M)\cap T_{\mathcal{M}_r}(M) = \{0\}$. Hence, by Theorems~\ref{LVmain}
and \ref{convergencethm}, we know that Algorithm~\ref{alg1} will converge in a linear fashion. 

$$
M=
$$
$$
\tiny  
\left[\begin{array}{ccccccccccccccc} 0.3474 & 0.0897 & 0.3971 & 0.4644 & 0.4168 & 0.7576 & 0.8206 & 0.8161 & 0.3279 & 0.3851 & 0.0825 & 0.4742 & 0.7684 & 0.6113 & 0.3832 \\
	0.1502 & 0.0414 & 0.2196 & 0.2450 & 0.2731 & 0.4415 & 0.4293 & 0.4358 & 0.1859 & 0.1574 & 0.0493 & 0.2386 & 0.4502 & 0.3087 & 0.1999 \\
	0.3853 & 0.1079 & 0.5912 & 0.6542 & 0.7544 & 1.1986 & 1.1445 & 1.1660 & 0.5024 & 0.3985 & 0.1343 & 0.6315 & 1.2231 & 0.8176 & 0.5325 \\
	0.2174 & 0.0577 & 0.2760 & 0.3160 & 0.3141 & 0.5394 & 0.5562 & 0.5582 & 0.2305 & 0.2358 & 0.0594 & 0.3160 & 0.5484 & 0.4080 & 0.2594 \\
	0.2124 & 0.0493 & 0.1453 & 0.1940 & 0.0662 & 0.2317 & 0.3503 & 0.3303 & 0.1109 & 0.2539 & 0.0228 & 0.2216 & 0.2302 & 0.2835 & 0.1647 \\
	0.1026 & 0.0238 & 0.0701 & 0.0936 & 0.0318 & 0.1117 & 0.1691 & 0.1594 & 0.0535 & 0.1227 & 0.0110 & 0.1070 & 0.1110 & 0.1368 & 0.0795 \\
	0.2429 & 0.0600 & 0.2290 & 0.2798 & 0.1972 & 0.4141 & 0.4982 & 0.4864 & 0.1846 & 0.2785 & 0.0439 & 0.2974 & 0.4176 & 0.3823 & 0.2332 \\
	0.3848 & 0.0895 & 0.2658 & 0.3538 & 0.1248 & 0.4257 & 0.6385 & 0.6028 & 0.2032 & 0.4595 & 0.0421 & 0.4033 & 0.4232 & 0.5159 & 0.3002 \\
	0.3698 & 0.1015 & 0.5311 & 0.5943 & 0.6536 & 1.0640 & 1.0419 & 1.0562 & 0.4488 & 0.3894 & 0.1186 & 0.5806 & 1.0845 & 0.7511 & 0.4852 \\
	0.3631 & 0.0880 & 0.3138 & 0.3919 & 0.2395 & 0.5511 & 0.7003 & 0.6776 & 0.2496 & 0.4217 & 0.0575 & 0.4246 & 0.5540 & 0.5451 & 0.3282 \\
	0.2081 & 0.0480 & 0.1369 & 0.1850 & 0.0542 & 0.2139 & 0.3347 & 0.3141 & 0.1036 & 0.2498 & 0.0208 & 0.2133 & 0.2120 & 0.2726 & 0.1575 \\
	0.5203 & 0.1334 & 0.5792 & 0.6812 & 0.5942 & 1.0977 & 1.2049 & 1.1953 & 0.4769 & 0.5797 & 0.1192 & 0.6992 & 1.1126 & 0.9011 & 0.5627 \\
	0.4871 & 0.1231 & 0.5111 & 0.6090 & 0.4961 & 0.9538 & 1.0797 & 1.0652 & 0.4178 & 0.5487 & 0.1028 & 0.6328 & 0.9651 & 0.8148 & 0.5047 \\
	0.0287 & 0.0122 & 0.1183 & 0.1173 & 0.2001 & 0.2658 & 0.2007 & 0.2154 & 0.1057 & 0.0156 & 0.0311 & 0.0991 & 0.2738 & 0.1297 & 0.0927 \\
	0.2602 & 0.0617 & 0.1997 & 0.2577 & 0.1230 & 0.3353 & 0.4628 & 0.4422 & 0.1558 & 0.3070 & 0.0341 & 0.2867 & 0.3353 & 0.3673 & 0.2173 \end{array}  \right]
$$
and 
$$
M_\Omega =
$$
$$
\tiny 
 \left[\begin{array}{ccccccccccccccc} 0      & 0.0897 & 0.3971 & 0      & 0.4168 & 0.7576 & 0.8206 & 0      & 0.3279 & 0.3851 & 0.0825 & 0      & 0.7684 & 0.6113 & 0.3832 \\
	0.1502 & 0      & 0      & 0.2450 & 0      & 0.4415 & 0.4293 & 0.4358 & 0      & 0.1574 & 0      & 0.2386 & 0.4502 & 0      & 0.1999 \\
	0.3853 & 0.1079 & 0.5912 & 0.6542 & 0.7544 & 1.1986 & 0      & 1.1660 & 0.5024 & 0      & 0.1343 & 0      & 1.2231 & 0.8176 & 0.5325 \\
	0.2174 & 0.0577 & 0.2760 & 0      & 0.3141 & 0      & 0      & 0.5582 & 0.2305 & 0      & 0.0594 & 0.3160 & 0.5484 & 0.4080 & 0      \\
	0      & 0      & 0.1453 & 0.1940 & 0.0662 & 0.2317 & 0.3503 & 0      & 0.1109 & 0      & 0      & 0.2216 & 0      & 0      & 0.1647 \\
	0.1026 & 0.0238 & 0.0701 & 0.0936 & 0.0318 & 0.1117 & 0.1691 & 0.1594 & 0.0535 & 0.1227 & 0.0110 & 0      & 0.1110 & 0.1368 & 0.0795 \\
	0.2429 & 0.0600 & 0.2290 & 0.2798 & 0      & 0      & 0.4982 & 0      & 0.1846 & 0      & 0      & 0.2974 & 0.4176 & 0.3823 & 0.2332 \\
	0      & 0      & 0      & 0.3538 & 0.1248 & 0      & 0      & 0      & 0      & 0.4595 & 0      & 0      & 0      & 0.5159 & 0.3002 \\
	0      & 0.1015 & 0.5311 & 0.5943 & 0.6536 & 1.0640 & 1.0419 & 1.0562 & 0.4488 & 0.3894 & 0      & 0.5806 & 1.0845 & 0      & 0      \\
	0.3631 & 0.0880 & 0.3138 & 0      & 0.2395 & 0.5511 & 0.7003 & 0.6776 & 0.2496 & 0      & 0.0575 & 0.4246 & 0.5540 & 0.5451 & 0      \\
	0.2081 & 0.0480 & 0.1369 & 0.1850 & 0.0542 & 0.2139 & 0      & 0      & 0.1036 & 0.2498 & 0.0208 & 0.2133 & 0      & 0.2726 & 0.1575 \\
	0      & 0.1334 & 0.5792 & 0      & 0.5942 & 1.0977 & 1.2049 & 1.1953 & 0      & 0      & 0.1192 & 0.6992 & 0      & 0.9011 & 0.5627 \\
	0.4871 & 0      & 0.5111 & 0.6090 & 0      & 0.9538 & 1.0797 & 0      & 0      & 0.5487 & 0.1028 & 0.6328 & 0.9651 & 0.8148 & 0.5047 \\
	0.0287 & 0.0122 & 0.1183 & 0.1173 & 0.2001 & 0.2658 & 0.2007 & 0.2154 & 0      & 0.0156 & 0      & 0.0991 & 0.2738 & 0.1297 & 0.0927 \\
	0.2602 & 0.0617 & 0.1997 & 0.2577 & 0.1230 & 0.3353 & 0.4628 & 0      & 0.1558 & 0.3070 & 0.0341 & 0.2867 & 0.3353 & 0.3673 & 0.2173 \end{array}  \right]
$$
where $0$ stands for the unknown entries. 

\begin{figure}[h]
\centering
	\includegraphics[scale=0.5]{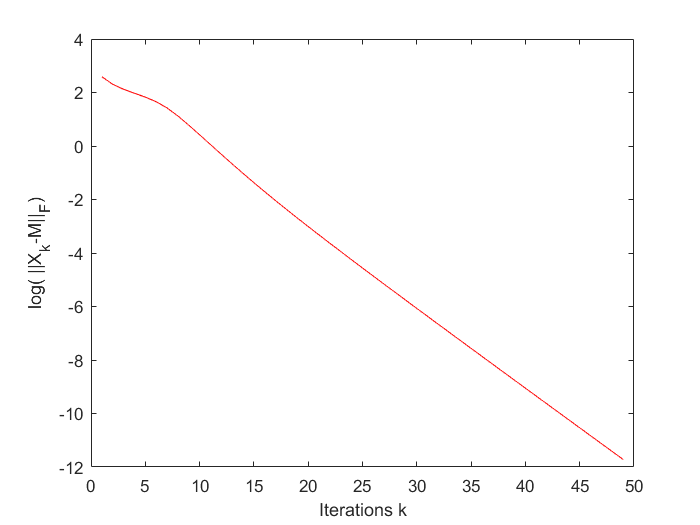}
\caption{Linear Convergence of the Iterations from Algorithm~\ref{alg1}\label{linearconvergence}}
	\end{figure}
	
Notice from the graph in Figure~\ref{linearconvergence} 
that as the iterations progress, the $X_k$ would eventually land in a 
neighborhood of $M$ where the convergence become linear.
\end{example}

The construction of $V^\Omega(M)$ enables us to choose $\Omega$ such that $V^\Omega$ is of full rank.  
We end with this subsection with the following
\begin{corollary}
Given $M$ with rank $r$, for any integer $m$ such that $2nr-r^2 \leq m \leq n^2$, there exists a subset $\Omega$ with $m=|\Omega|$
such that $V^\Omega$ is of full rank, equivalently $T_{\mathcal{A}_{\Omega}}(M)\cap T_{\mathcal{M}_r}(M) = \{0\}$ and Algorithm~\ref{alg1} can find $M$ in a linear fashion for a good
initial guess.
\end{corollary}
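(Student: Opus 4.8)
The plan is to reduce the existence of a good sampling set $\Omega$ to an elementary statement about selecting linearly independent rows of the matrix $T_M$. The starting point is the rank identity established inside the proof of Theorem~\ref{LVmain}, namely $\operatorname{Rank}(T_M) = 2nr - r^2$, where $T_M$ is the $n^2 \times 2n^2$ block matrix whose rows are indexed by $\{1,\dots,n\}\times\{1,\dots,n\}$. Since the row rank of a matrix equals its rank, $T_M$ has exactly $2nr - r^2$ linearly independent rows. First I would fix such a collection of linearly independent rows and let $\Omega_0$ denote the corresponding set of indices, so that $|\Omega_0| = 2nr - r^2$ and $\operatorname{Rank}(T^{\Omega_0}_M) = 2nr - r^2$.

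Next, given any integer $m$ with $2nr - r^2 \le m \le n^2$, I would enlarge $\Omega_0$ to a set $\Omega$ of cardinality $m$ by adjoining any $m - (2nr - r^2)$ further indices chosen from the complement $\Omega_0^c$. This is possible precisely because the complement has $n^2 - (2nr - r^2)$ elements and the hypothesis $m \le n^2$ guarantees $m - (2nr - r^2) \le n^2 - (2nr - r^2)$. Because $\Omega \supseteq \Omega_0$, the matrix $T^\Omega_M$ contains every row of $T^{\Omega_0}_M$, so $\operatorname{Rank}(T^\Omega_M) \ge \operatorname{Rank}(T^{\Omega_0}_M) = 2nr - r^2$; on the other hand $T^\Omega_M$ is a row-submatrix of $T_M$, whence $\operatorname{Rank}(T^\Omega_M) \le \operatorname{Rank}(T_M) = 2nr - r^2$. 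The two bounds together force $\operatorname{Rank}(T^\Omega_M) = 2nr - r^2$.

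Finally I would invoke the chain of equivalences already in hand. Using the factorization $V^\Omega(M) = T^\Omega_M (T^\Omega_M)^\top$ from the proof of $(3)\iff(4)$ in Theorem~\ref{LVmain}, together with the fact that $\operatorname{Rank}(A A^\top) = \operatorname{Rank}(A)$ over $\mathbb{R}$, one gets $\operatorname{Rank}(V^\Omega(M)) = 2nr - r^2$, which is the maximal value attainable and is the meaning of ``$V^\Omega$ is of full rank'' in this setting. By the equivalences $(4)\iff(3)\iff(1)$ in Theorem~\ref{LVmain}, this rank condition is equivalent to the transversality $T_{\mathcal{A}_{\Omega}}(M) \cap T_{\mathcal{M}_r}(M) = \{0\}$, and Theorem~\ref{convergencethm} then delivers local linear convergence of Algorithm~\ref{alg1} to $M$ from a sufficiently good initial guess, completing the argument.

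The only genuinely delicate point is interpretive rather than technical: since $V^\Omega(M)$ has size $m \times m$ but its rank can never exceed $2nr - r^2$, the phrase ``full rank'' must be read as attaining this maximal value $2nr - r^2$ rather than the literal $m$. Once this reading is fixed, every step is a routine consequence of the rank identity for $T_M$ and the monotonicity of rank under the addition of rows, so no new estimate is required.
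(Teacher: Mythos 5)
Your proposal is correct and takes essentially the same route as the paper's own proof, which in two terse sentences says to choose $\Omega$ so that the corresponding rows of $T_M$ give $T^{\Omega}_M$ rank $2nr-r^2$ and then apply Theorems~\ref{LVmain} and~\ref{convergencethm}. You have simply supplied the details the paper leaves implicit: the existence of $2nr-r^2$ independent rows (from $\operatorname{Rank}(T_M)=2nr-r^2$), the padding of $\Omega_0$ up to cardinality $m$ with rank preserved by monotonicity, and the correct reading of ``full rank'' of the $m\times m$ matrix $V^{\Omega}(M)$ as attaining the maximal value $2nr-r^2$.
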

\begin{proof}
We mainly choose $\Omega$ such that the corresponding rows of $T_M$ which form $T_M^\Omega$ of rank $2nr-r^2$. Then Theorems~\ref{LVmain} and \ref{convergencethm} can be applied. 
\end{proof}

\subsection{Convergence of Algorithm~\ref{alg1} When 
$r_g\not=\hbox{Rank}(M)$}
In this subsection, we show that the algorithm does converge 
under certain reasonable assumption irrespective 
of whether our guessed rank $r_g$ is same as the rank $r$ of matrix 
$M$ or not. We begin with two trivial results. 
\begin{lemma}
\label{fact1}
Let $Y_k$ and $X_{k+1}$ be the matrices we obtain in the step 1 and step 2 of the $k^{th}$ iteration of Algorithm~\ref{alg1}. Then 
$$
	X_{k+1} = \begin{cases}
	(Y_k)_{i,j} & \text{if } (i,j) \not \in \Omega\\
	M_{i,j}  & \text{Otherwise.}
	\end{cases}
$$
That is, $X_{k+1}$ is the orthogonal projection of $Y_k $ onto $\mathcal{A}_\Omega$.
\end{lemma}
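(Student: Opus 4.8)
The plan is to recognize this as the standard description of the orthogonal projection onto an affine subspace of a Hilbert space, specialized to the entrywise structure of the Frobenius inner product. First I would equip $\mathbb{R}^{n \times n}$ with the Frobenius inner product $\langle A, B\rangle = \sum_{i,j} A_{i,j} B_{i,j}$, so that $\norm{\cdot}$ is the induced norm and the matrix units $\{E_{i,j}\}$ form an orthonormal basis. Under this structure the constraint set $\mathcal{A}_\Omega = \{X : P_\Omega(X - M) = 0\}$ is an affine subspace, and by Lemma~\ref{key2} its direction (tangent) space is $L := \{X : P_\Omega(X) = 0\} = \spn\{E_{i,j} : (i,j) \notin \Omega\}$.

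Next I would invoke the characterization of the orthogonal projection onto an affine subspace: a point $Z \in \mathcal{A}_\Omega$ equals $P_{\mathcal{A}_\Omega}(Y_k)$ if and only if the residual $Y_k - Z$ is orthogonal to the direction space $L$. I would then exhibit the candidate $Z$ given in the statement, namely $Z_{i,j} = M_{i,j}$ for $(i,j) \in \Omega$ and $Z_{i,j} = (Y_k)_{i,j}$ otherwise, and verify the two required properties. Membership $Z \in \mathcal{A}_\Omega$ is immediate since $Z$ agrees with $M$ on $\Omega$, so $P_\Omega(Z - M) = 0$. For the orthogonality, observe that by construction $Y_k - Z$ is supported on $\Omega$, whereas every element of $L$ is supported on $\Omega^c$; since these supports are disjoint, the Frobenius inner product $\langle Y_k - Z, W\rangle = \sum_{i,j} (Y_k - Z)_{i,j} W_{i,j}$ vanishes for all $W \in L$.

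Equivalently, and perhaps more transparently, I could argue by direct minimization: because $\norm{X - Y_k}^2 = \sum_{i,j} (X_{i,j} - (Y_k)_{i,j})^2$ splits as a sum of nonnegative terms, and the constraint $X \in \mathcal{A}_\Omega$ fixes exactly the coordinates indexed by $\Omega$ (forcing $X_{i,j} = M_{i,j}$ there) while leaving the coordinates indexed by $\Omega^c$ free, the minimization decouples entrywise. Each free coordinate is minimized by the choice $X_{i,j} = (Y_k)_{i,j}$, which reproduces exactly the matrix $Z$ above, and strict convexity of the squared Frobenius norm guarantees this minimizer is unique. Either route gives $X_{k+1} = P_{\mathcal{A}_\Omega}(Y_k) = Z$.

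There is no real obstacle here; the lemma is genuinely routine, and the only point requiring any care is to ensure that the orthogonality (or the decoupling) is taken with respect to the correct direction space $L$ from Lemma~\ref{key2}, rather than with respect to the affine set itself. I would therefore keep the proof short and lean on the entrywise orthonormality of the matrix units to make the disjoint-support argument a single line.
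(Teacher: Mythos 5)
Your proof is correct, and the paper itself offers no proof at all — it labels this one of ``two trivial results'' and simply asserts (earlier, after Algorithm~\ref{alg1}) that $P_{\mathcal{A}_\Omega}(Y_k)$ is computed by resetting the entries in positions $\Omega$ to those of $M$. Both of your routes — orthogonality of the residual to the direction space $L=\{X : P_\Omega(X)=0\}$ from Lemma~\ref{key2}, and the entrywise decoupling of $\norm{X-Y_k}^2$ under the constraint — are the standard rigorous versions of exactly that assertion, so your write-up correctly fills in the argument the paper leaves implicit.
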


\begin{lemma}
\label{fact2}
Let  $X_{k+1} = \mathbf{U}\mathbf{\Sigma}\mathbf{V}^\top $ be the standard singular value decomposition 
with $\mathbf{\Sigma} = \operatorname{diag}\{\sigma_1,\cdots,\sigma_n\}$. Then  
$$
Y_{k+1} = \mathbf{U}\tilde{\mathbf{\Sigma}}\mathbf{V}^\top , 
$$
where $\tilde{\mathbf{\Sigma}} = diag\{\sigma_1,\cdots, \sigma_{r_g},0, \cdots,0\}$. 
	
Also $Y_{k+1}$ is the orthogonal projection of $X_{k+1} $ onto $\mathcal{M}_{r_g}$.  
\end{lemma}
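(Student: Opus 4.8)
The plan is to recognize the asserted identity as the classical Eckart--Young--Mirsky theorem on best low-rank approximation. By the definition used in Algorithm~\ref{alg1}, $Y_{k+1} = P_{\mathcal{M}_{r_g}}(X_{k+1})$ is the rank-$r_g$ matrix that minimizes the Frobenius distance $\norm{X_{k+1}-Y}_F$ over all $Y$ of rank at most $r_g$. Hence it suffices to show that this minimizer is exactly the truncated SVD $\mathbf{U}\tilde{\mathbf{\Sigma}}\mathbf{V}^\top$, which simultaneously establishes both the explicit formula and the claim that it is the orthogonal projection.

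First I would exploit the unitary invariance of the Frobenius norm. Writing $X_{k+1} = \mathbf{U}\mathbf{\Sigma}\mathbf{V}^\top$ and substituting $Z = \mathbf{U}^\top Y \mathbf{V}$, one has $\norm{X_{k+1}-Y}_F = \norm{\mathbf{\Sigma}-Z}_F$ with $\operatorname{Rank}(Z) = \operatorname{Rank}(Y)$, so the problem reduces to approximating the diagonal matrix $\mathbf{\Sigma}$ by a rank-$r_g$ matrix. Next I would establish the lower bound $\norm{X_{k+1}-Y}_F^2 \geq \sum_{i=r_g+1}^{n}\sigma_i^2$ valid for every $Y$ of rank at most $r_g$. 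The cleanest route is Weyl's singular value inequality $\sigma_{i+j-1}(A+B) \leq \sigma_i(A)+\sigma_j(B)$ (the same perturbation bound \cite{Weyl} already invoked in Lemma~\ref{gradientofproj}): applying it with $A = X_{k+1}-Y$, $B = Y$ and $j = r_g+1$, so that $\sigma_{r_g+1}(Y)=0$ because $\operatorname{Rank}(Y)\le r_g$, yields $\sigma_i(X_{k+1}-Y) \geq \sigma_{i+r_g}(X_{k+1})$ for each $1 \leq i \leq n-r_g$. Summing the squares over these $i$ produces the claimed bound. Since the truncated SVD attains it exactly, because $X_{k+1}-\mathbf{U}\tilde{\mathbf{\Sigma}}\mathbf{V}^\top = \sum_{i>r_g}\sigma_i u_i v_i^\top$ has squared Frobenius norm $\sum_{i>r_g}\sigma_i^2$, it is a minimizer, and therefore $Y_{k+1} = \mathbf{U}\tilde{\mathbf{\Sigma}}\mathbf{V}^\top$.

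The argument is essentially routine once Weyl's inequality is in hand, which is consistent with the lemma being flagged as trivial. The only point that merits a word of care is the well-definedness of \emph{the} orthogonal projection, i.e. uniqueness of the minimizer; but this was already secured in Lemma~\ref{gradientofproj}, where single-valuedness of $P_{\mathcal{M}_{r_g}}$ follows from the spectral gap condition $\sigma_{r_g}(X_{k+1}) > \sigma_{r_g+1}(X_{k+1})$. I therefore expect no genuine obstacle beyond assembling these standard facts, and the main step to write down carefully is the Weyl-inequality lower bound together with its attainment by the truncation.
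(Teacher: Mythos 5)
Your proof is correct and matches the paper's (implicit) justification: the paper offers no proof of this lemma at all, dismissing it as well known, and what it is invoking is precisely the Eckart--Young--Mirsky theorem, for which your Weyl-inequality argument (reduction by unitary invariance, the lower bound $\sigma_i(X_{k+1}-Y)\ge\sigma_{i+r_g}(X_{k+1})$ from $\sigma_{r_g+1}(Y)=0$, and attainment by the truncation) is the standard proof. The only caveat, which the paper also glosses over, is that your appeal to Lemma~\ref{gradientofproj} for uniqueness requires the gap condition $\sigma_{r_g}(X_{k+1})>\sigma_{r_g+1}(X_{k+1})$, which need not hold at an arbitrary iterate; when it fails the projection is set-valued and the truncated SVD is merely one admissible choice.
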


$\mathcal{M}_{r_g}$, the collection  of $n \times n$ real (complex) matrices of rank ${r_g}$, forms a quasi-affine real (complex) variety and is a manifold of real (complex) dimension ${r_g}(2n-{r_g})$.

%

It is well known that $Y_k$, obtained from $X_k$ by SVD truncation, is the orthogonal projection of $X_k$ onto $\mathcal{M}_{r_g}$. Hence we $X_k-Y_k$ must be orthogonal to the tangent space of $\mathcal{M}_{r_g}$ at $Y_k$. Recall from earlier section that tangent space of $\mathcal{M}_{r_g}$ at the point $X$ is given by
$$
T_{\mathcal{M}_{r_g}}(X) = \left\{AX + XB, A \in \mathbb{R}^{m\times m}, B \in \mathbb{R}^{n\times n}\right\}
$$  
\begin{lemma}\label{fact11}
$Y_k$ satisfies: 
$$
	\langle A Y_k + Y_k B, X_k - Y_k\rangle = 0 \text{ for all } k,A \in \mathbb{R}^{n\times n} \mbox{ and } B \in \mathbb{R}^{n\times n}.
$$
\end{lemma}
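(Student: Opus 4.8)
The plan is to recognize that this lemma is exactly the assertion that the residual $X_k - Y_k$ is orthogonal to the tangent space $T_{\mathcal{M}_{r_g}}(Y_k) = \{AY_k + Y_kB \mid A,B \in \mathbb{R}^{n\times n}\}$, which is the standard geometric characterization of an orthogonal projection onto a smooth manifold. While one could simply invoke this well-known fact (as the paragraph preceding the lemma already hints), I would instead give a short self-contained verification through the singular value decomposition, since the whole statement collapses to an elementary trace computation.

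First I would invoke Lemma~\ref{fact2}: writing the SVD $X_k = \mathbf{U}\mathbf{\Sigma}\mathbf{V}^\top$ with $\mathbf{\Sigma} = \operatorname{diag}\{\sigma_1,\ldots,\sigma_n\}$, the truncation is $Y_k = \mathbf{U}\tilde{\mathbf{\Sigma}}\mathbf{V}^\top$ with $\tilde{\mathbf{\Sigma}} = \operatorname{diag}\{\sigma_1,\ldots,\sigma_{r_g},0,\ldots,0\}$. Hence $X_k - Y_k = \mathbf{U}(\mathbf{\Sigma}-\tilde{\mathbf{\Sigma}})\mathbf{V}^\top$, where the diagonal matrix $\mathbf{\Sigma}-\tilde{\mathbf{\Sigma}}$ is supported only on the last $n-r_g$ positions.

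The key step is to establish the two matrix identities $Y_k^\top(X_k - Y_k) = 0$ and $(X_k - Y_k)Y_k^\top = 0$. Both are immediate from the SVD expressions together with $\mathbf{U}^\top\mathbf{U} = \mathbf{V}^\top\mathbf{V} = I$: for instance $Y_k^\top(X_k - Y_k) = \mathbf{V}\tilde{\mathbf{\Sigma}}(\mathbf{\Sigma}-\tilde{\mathbf{\Sigma}})\mathbf{V}^\top$, and the product of the two diagonal matrices $\tilde{\mathbf{\Sigma}}(\mathbf{\Sigma}-\tilde{\mathbf{\Sigma}})$ vanishes because their supports $\{1,\ldots,r_g\}$ and $\{r_g+1,\ldots,n\}$ are disjoint; the second identity is identical after cancelling $\mathbf{U}^\top\mathbf{U}$.

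Finally I would expand the Frobenius inner product as $\langle AY_k + Y_kB,\, X_k - Y_k\rangle = \operatorname{tr}\!\big(Y_k^\top A^\top (X_k-Y_k)\big) + \operatorname{tr}\!\big(B^\top Y_k^\top (X_k-Y_k)\big)$. Using the cyclic property of the trace, the first term equals $\operatorname{tr}\!\big(A^\top (X_k-Y_k)Y_k^\top\big) = 0$ by the second identity, while the second term is $\operatorname{tr}\!\big(B^\top\, Y_k^\top(X_k-Y_k)\big) = 0$ by the first identity. Since $A,B$ are arbitrary and $k$ is arbitrary, the lemma follows. I do not anticipate any real obstacle here; the only point needing care is matching each of the two trace terms to the correct one of the two orthogonality identities through the cyclic rearrangement.
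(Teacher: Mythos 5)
Your proof is correct and takes essentially the same route as the paper's: both rest on the SVD truncation of Lemma~\ref{fact2} and reduce the inner product to traces that vanish because $\tilde{\mathbf{\Sigma}}(\mathbf{\Sigma}-\tilde{\mathbf{\Sigma}}) = (\mathbf{\Sigma}-\tilde{\mathbf{\Sigma}})\tilde{\mathbf{\Sigma}} = 0$, with your only (cosmetic) difference being that you package the computation as the two intermediate identities $Y_k^\top(X_k-Y_k)=0$ and $(X_k-Y_k)Y_k^\top=0$ before applying cyclicity of the trace. One trivial slip: in the second identity it is $\mathbf{V}^\top\mathbf{V}$, not $\mathbf{U}^\top\mathbf{U}$, that cancels, but this does not affect the argument.
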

\begin{proof}
Let $X_k = \mathbf{U}\mathbf{\Sigma}\mathbf{V}^\top $ and $Y_k = \mathbf{U}\tilde{\mathbf{\Sigma}}\mathbf{V}^\top $, where $\Sigma = 
\operatorname{diag}\{\sigma_1,\cdots,\sigma_n\}$ and $\tilde{\Sigma} =\operatorname{diag}\{\sigma_1,\cdots,\sigma_{r_g},0,\cdots,0\}$ be the 
singular value decompositions of $\pp k$ and $\qq k$ respectively. 
\begin{equation*}
\begin{split}
	\langle A Y_k + Y_k B, X_k - Y_k\rangle & = \operatorname{Trace}\left((\pp k - \qq k)^\top (A\qq k + \qq k B) \right)\\
	& =  \operatorname{Trace}\left({\bf V}{\bf {(\Sigma-\tilde{\Sigma})}}{\bf U}^\top  A \qq k \right) + \operatorname{Trace}\left({\bf V}{\bf {(\Sigma-\tilde{\Sigma})}}{\bf U}^\top  \qq k B  \right)\\
	& =  \operatorname{Trace}\left({\bf V}{\bf {(\Sigma-\tilde{\Sigma})}}{\bf U}^\top  A {\bf U} {\bf {\tilde{\Sigma}}} {\bf V}^\top  \right) + \operatorname{Trace}\left({\bf V}{\bf {(\Sigma-\tilde{\Sigma})}}{\bf U}^\top  {\bf U} {\bf {\tilde{\Sigma}}} {\bf V}^\top  B  \right)\\
	& = \operatorname{Trace}\left( {\bf V}^\top   {\bf V}{\bf {(\Sigma-\tilde{\Sigma})}}{\bf U}^\top  A {\bf U} {\bf {\tilde{\Sigma}}}  \right) + \operatorname{Trace}\left({\bf V}{\bf {(\Sigma-\tilde{\Sigma})}} {\bf {\tilde{\Sigma}}} {\bf V}^\top  B  \right)\\
	& = \operatorname{Trace}\left(  {\bf {\tilde{\Sigma}}} {\bf {(\Sigma-\tilde{\Sigma})}}{\bf U}^\top  A {\bf U}  \right) + \operatorname{Trace}\left({\bf V}{\bf {(\Sigma-\tilde{\Sigma})}} {\bf {\tilde{\Sigma}}} {\bf V}^\top  B  \right)\\
	& = 0.
\end{split}
\end{equation*} 
The last step uses the fact that $\tilde{\Sigma}(\Sigma - \tilde{\Sigma}) = (\Sigma - \tilde{\Sigma})\tilde{\Sigma} = 0$.
\end{proof}

From the definitions, it follows that 
\begin{equation}
\label{fact3}
	\norm{\pp k - \qq k} \geq \norm{\pp {k+1} - \qq k} \geq \norm{\pp {k+1} - \qq {k+1}} \text{ for all } k.
\end{equation}
From equation \eqref{fact3}, we observe that $\norm{\pp k - \qq k}$ is a non-increasing sequence bounded 
below by 0, it thus converges to its infimum. Thus, we have

\begin{lemma}\label{fact3.1}
The sequence 
	$$
	\norm{\pp k - \qq k}
	$$
converges.
\end{lemma}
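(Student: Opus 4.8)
The plan is to observe that this lemma is an immediate application of the monotone convergence theorem for real sequences, with essentially all of the substantive work already packaged into \eqref{fact3}. First I would name the sequence, setting $a_k := \norm{\pp k - \qq k}$, and view $\{a_k\}_{k \ge 0}$ as a sequence in $\mathbb{R}_{\ge 0}$.

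Next I would read the monotonicity directly off \eqref{fact3}. Retaining only the outer two terms of the chain $\norm{\pp k - \qq k} \ge \norm{\pp{k+1} - \qq k} \ge \norm{\pp{k+1} - \qq{k+1}}$ yields $a_k \ge a_{k+1}$ for every $k$, so $\{a_k\}$ is non-increasing. Since $a_k = \norm{\pp k - \qq k} \ge 0$ for all $k$, the sequence is also bounded below by $0$. Finally I would invoke the monotone convergence theorem: a non-increasing sequence of real numbers that is bounded below converges to its infimum. Hence $a_k \to \inf_k a_k =: a_\infty \ge 0$, which is exactly the assertion.

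There is essentially no obstacle at the level of this lemma; the real content lives in \eqref{fact3}, which is assumed here. Were one to unwind it, the first inequality holds because $\pp{k+1} = P_{\mathcal{A}_\Omega}(\qq k)$ is the nearest point of $\mathcal{A}_\Omega$ to $\qq k$ (Lemma \ref{fact1}) while $\pp k \in \mathcal{A}_\Omega$, and the second because $\qq{k+1} = P_{\mathcal{M}_{r_g}}(\pp{k+1})$ is the nearest point of $\mathcal{M}_{r_g}$ to $\pp{k+1}$ (Lemma \ref{fact2}) while $\qq k \in \mathcal{M}_{r_g}$; in each step, passing to the projection can only shrink the distance. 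The one point worth flagging is that the limit is only guaranteed to equal the infimum of the gaps and need not be $0$, so this lemma in isolation does not assert convergence of the iterates $\pp k$ or $\qq k$ themselves, but only of the gap $\norm{\pp k - \qq k}$ between them.
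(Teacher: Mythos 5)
Your proof is correct and follows the paper's own argument exactly: the paper likewise deduces from \eqref{fact3} that $\norm{\pp k - \qq k}$ is non-increasing, notes it is bounded below by $0$, and concludes convergence to the infimum via monotone convergence. Your added unwinding of \eqref{fact3} and the caveat that the limit need not be zero are accurate but not part of the paper's proof of this lemma.
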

So let 
\begin{equation}
\label{fact3.2}
L = \lim\limits_{k} \norm{\pp k - \qq k}^2.
\end{equation}
Next we have 
\begin{lemma}\label{fact4}
\begin{equation}
\label{eq:fact4}
\norm{\pp {k+1} - \pp k }^2 + \norm{\pp {k+1} - \qq k }^2 = \norm{\pp k - \qq k }^2
\end{equation}
\end{lemma}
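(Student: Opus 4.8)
The plan is to recognize the identity \eqref{eq:fact4} as the Pythagorean theorem attached to the orthogonal projection $X_{k+1} = P_{\mathcal{A}_\Omega}(Y_k)$, with the right angle sitting at the vertex $X_{k+1}$. Concretely, I would first establish the single orthogonality relation
$$
\langle X_{k+1} - X_k,\, X_{k+1} - Y_k\rangle = 0,
$$
and then expand $\norm{X_k - Y_k}^2 = \norm{(X_k - X_{k+1}) + (X_{k+1} - Y_k)}^2$. The cross term is $2\langle X_k - X_{k+1},\, X_{k+1} - Y_k\rangle$, which vanishes by the orthogonality relation, leaving exactly the claimed identity $\norm{X_{k+1} - X_k}^2 + \norm{X_{k+1} - Y_k}^2 = \norm{X_k - Y_k}^2$.

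To establish the orthogonality, I would appeal to the explicit entrywise description of $X_{k+1}$ furnished by Lemma~\ref{fact1}. First I observe that every iterate lies in $\mathcal{A}_\Omega$: the initial guess $X_0 = \mathcal{P}_\Omega(M)$ already agrees with $M$ on $\Omega$, and every subsequent $X_{k+1}$ is by construction a projection onto $\mathcal{A}_\Omega$, hence $(X_k)_{ij} = M_{ij}$ for all $(i,j) \in \Omega$. Combining this with Lemma~\ref{fact1}, which gives $(X_{k+1})_{ij} = M_{ij}$ on $\Omega$ and $(X_{k+1})_{ij} = (Y_k)_{ij}$ on $\Omega^c$, I see that $X_{k+1} - X_k$ vanishes on $\Omega$ (both entries equal $M_{ij}$) while $X_{k+1} - Y_k$ vanishes on $\Omega^c$ (both entries equal $(Y_k)_{ij}$).

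The two difference matrices therefore have disjoint support, one supported on $\Omega^c$ and the other on $\Omega$, so their Frobenius inner product is a sum in which every summand carries at least one zero factor. This yields $\langle X_{k+1} - X_k,\, X_{k+1} - Y_k\rangle = 0$ term by term, which is precisely the orthogonality needed to complete the argument.

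There is essentially no serious obstacle here, as the result is a clean Pythagorean identity; the only point demanding a moment of care is verifying that $X_k \in \mathcal{A}_\Omega$, so that $X_{k+1} - X_k$ is genuinely supported off $\Omega$. Alternatively, one could bypass the entrywise bookkeeping and invoke the standard variational characterization of the projection onto the affine space $\mathcal{A}_\Omega$, namely that $\langle Y_k - X_{k+1},\, Z - X_{k+1}\rangle = 0$ for every $Z \in \mathcal{A}_\Omega$, applied to $Z = X_k$; but the support argument is fully self-contained and matches the style of the preceding lemmas.
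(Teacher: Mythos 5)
Your proposal is correct and follows essentially the same route as the paper: the paper's (very terse) proof also reduces the identity to the single orthogonality relation $\langle X_{k+1}-X_k,\, X_{k+1}-Y_k\rangle = 0$, derived from the entrywise description of the projection in Lemma~\ref{fact1}, and your support-disjointness argument (difference supported on $\Omega^c$ versus on $\Omega$) is just the explicit justification the paper leaves implicit. Your observation that one must check $X_k \in \mathcal{A}_\Omega$ is a worthwhile bit of care the paper glosses over, but it does not constitute a different approach.
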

\begin{proof}
The result (\ref{eq:fact4}) follows from Lemmas \ref{fact1} and \ref{fact2}. In fact  we have used 
the fact $\langle \pp {k+1} - \pp k, \pp {k+1} - \qq k \rangle = 0$ to have (\ref{eq:fact4}).   
\end{proof}


\begin{lemma}\label{fact6}
The series 
$$
	\sum_{k=1}^{\infty} \norm{\pp {k+1} - \pp k}^2
$$ 
converges. In particular 
$$
	\norm{\pp {k+1} - \pp k} \rightarrow 0. 
$$
\end{lemma}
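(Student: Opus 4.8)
The plan is to rewrite the identity from Lemma~\ref{fact4} as a telescoping upper bound and then invoke the monotone convergence of partial sums. Setting $a_k := \norm{\pp k - \qq k}^2$, Lemma~\ref{fact4} reads $\norm{\pp {k+1} - \pp k}^2 = a_k - \norm{\pp {k+1} - \qq k}^2$. The key observation is that the middle inequality in \eqref{fact3}, namely $\norm{\pp {k+1} - \qq k} \geq \norm{\pp {k+1} - \qq {k+1}}$, allows me to replace $\norm{\pp {k+1} - \qq k}^2$ by the smaller quantity $a_{k+1} = \norm{\pp {k+1} - \qq {k+1}}^2$. This produces the telescoping estimate $\norm{\pp {k+1} - \pp k}^2 \leq a_k - a_{k+1}$.

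Summing this estimate from $k=1$ to $N$ collapses the right-hand side to $a_1 - a_{N+1} \leq a_1 = \norm{\pp 1 - \qq 1}^2$. Since every summand $\norm{\pp {k+1} - \pp k}^2$ is nonnegative, the partial sums form a nondecreasing sequence bounded above by $\norm{\pp 1 - \qq 1}^2$; hence the series $\sum_{k=1}^{\infty} \norm{\pp {k+1} - \pp k}^2$ converges. In fact its sum is at most $a_1 - L$, where $L$ is the limit guaranteed by Lemma~\ref{fact3.1}. Finally, convergence of a series forces its general term to vanish, so $\norm{\pp {k+1} - \pp k}^2 \to 0$ and therefore $\norm{\pp {k+1} - \pp k} \to 0$.

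There is no serious obstacle here: the whole argument rests on a single algebraic step, combining the Pythagorean identity of Lemma~\ref{fact4} with the monotonicity chain \eqref{fact3}. The only point requiring care is to align the indices correctly, so that the bound genuinely telescopes (with $a_k$ and $a_{k+1}$) rather than leaving an uncontrolled residual term; once this is done, boundedness of the partial sums and the term test finish the proof immediately.
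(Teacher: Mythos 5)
Your proof is correct and follows essentially the same route as the paper: both combine the Pythagorean identity of Lemma~\ref{fact4} with the monotonicity chain \eqref{fact3} to obtain $\norm{\pp {k+1} - \pp k}^2 \leq \norm{\pp k - \qq k}^2 - \norm{\pp {k+1} - \qq {k+1}}^2$, then telescope and bound the nondecreasing partial sums by $\norm{\pp 1 - \qq 1}^2$. Your explicit $a_k$ notation and the sharper bound $a_1 - L$ are minor presentational refinements of the paper's argument, not a different method.
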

\begin{proof}
We use (\ref{eq:fact4}) 
and (\ref{fact3}) to get 
$$
	\norm{\pp k - \qq k }^2 \geq \norm{\pp {k+1} - \pp k }^2 + \norm{\pp {k+1} - \qq {k+1} }^2 
$$
summing both sides from $k=1$ to $n$ we get	
$$
	\sum_{k=1}^{n} \norm{\pp k - \qq k }^2 \geq \sum_{k=1}^{n} \norm{\pp {k+1} - \pp k }^2  + \sum_{k=1}^{n} \norm{\pp {k+1} - \qq {k+1} }^2.  
$$
From which it follows that 
	$$
	\norm{\pp 1 - \qq 1}^2 \geq \norm{\pp n -\qq n}^2 + \sum_{k=1}^{n}\norm{\pp {k+1} - \pp k}^2 \geq \sum_{k=1}^{n}\norm{\pp {k+1} - \pp k}^2
	$$
Thus the partial sums of the $\sum_{k=1}^{\infty} \norm{\pp {k+1} - \pp k}^2$ forms an non-decreasing sequence bounded from above. 
The result follows immediately.
\end{proof}

\begin{lemma}\label{fact7}
The series 
	$$
	\sum_{k=1}^{\infty} \norm{(\pp k -\qq k)_{\Omega^c}}^2
	$$ 
converges. In particular 
	$$
	\norm{(\pp k -\qq k)_{\Omega^c}} \rightarrow 0.
	$$
\end{lemma}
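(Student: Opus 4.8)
The plan is to reduce the statement entirely to Lemma~\ref{fact6} by exhibiting an exact identity between the off-$\Omega$ residual $(X_k - Y_k)_{\Omega^c}$ and the consecutive difference $X_{k+1}-X_k$, whose squared norms are already known to be summable. The whole argument rests on the two support properties of the iterates recorded in Lemma~\ref{fact1}: the step-2 projection onto $\mathcal{A}_\Omega$ leaves the entries on $\Omega^c$ untouched while forcing agreement with $M$ on $\Omega$.

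First I would observe that consecutive iterates agree with $M$ on $\Omega$. For every $k \ge 1$ we have $X_k = P_{\mathcal{A}_\Omega}(Y_{k-1})$, so by Lemma~\ref{fact1} $(X_k)_\Omega = M_\Omega$; in particular $X_k$ and $X_{k+1}$ coincide on $\Omega$, whence $(X_{k+1}-X_k)_\Omega = 0$. Thus $X_{k+1}-X_k$ is supported entirely on $\Omega^c$ and $\norm{X_{k+1}-X_k} = \norm{(X_{k+1}-X_k)_{\Omega^c}}$. Next, Lemma~\ref{fact1} also gives $(X_{k+1})_{\Omega^c} = (Y_k)_{\Omega^c}$, so on $\Omega^c$ the difference $X_{k+1}-X_k$ equals $Y_k - X_k$. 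Combining these two observations yields the exact identity
$$
\norm{(X_k - Y_k)_{\Omega^c}} = \norm{(Y_k - X_k)_{\Omega^c}} = \norm{(X_{k+1}-X_k)_{\Omega^c}} = \norm{X_{k+1}-X_k}.
$$

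With this identity in hand the conclusion is immediate. Lemma~\ref{fact6} asserts that $\sum_{k=1}^{\infty} \norm{X_{k+1}-X_k}^2$ converges, so the series $\sum_{k=1}^{\infty} \norm{(X_k - Y_k)_{\Omega^c}}^2$ converges as well, and in particular its terms tend to zero, giving $\norm{(X_k - Y_k)_{\Omega^c}} \to 0$. There is no genuine obstacle here; the only point requiring a moment of care is the bookkeeping of which projection fixes which block of entries, i.e.\ recognizing that the step-2 projection onto $\mathcal{A}_\Omega$ carries precisely the off-$\Omega$ part of the residual $X_k - Y_k$ into $X_{k+1}-X_k$, so that controlling the summable consecutive differences automatically controls the off-$\Omega$ residuals.
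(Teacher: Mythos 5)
Your proof is correct and follows essentially the same route as the paper: both arguments use Lemma~\ref{fact1} to note that $(X_{k+1}-X_k)_\Omega = 0$ and $(X_{k+1})_{\Omega^c} = (Y_k)_{\Omega^c}$, yielding the identity $\norm{X_{k+1}-X_k} = \norm{(X_k-Y_k)_{\Omega^c}}$, and then invoke the summability from Lemma~\ref{fact6}. Your write-up is in fact slightly cleaner in making explicit why $(X_{k+1}-X_k)_\Omega=0$, but the substance is identical.
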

\begin{proof}
\begin{equation*}
\begin{split}
	\norm{\pp {k+1} - \pp k}^2 & = \norm{(\pp {k+1} - \pp k)_\Omega}^2 + \norm{(\pp {k+1} - \pp k)_{\Omega^c}}^2\\
	 & = \norm{(\pp {k+1})_\Omega - (\pp k)_\Omega}^2 + \norm{(\pp {k+1})_{\Omega^c} - (\pp k)_{\Omega^c}}^2\\
	 & = \norm{M_\Omega - M_\Omega}^2 + \norm{(\pp {k+1})_{\Omega^c} - (\pp k)_{\Omega^c}}^2\\
	 & = \norm{(\pp {k+1})_{\Omega^c} - (\pp k)_{\Omega^c}}^2.    
	\end{split}
\end{equation*}
Now noting that $(\pp {k+1})_{\Omega^c} = (\qq k)_{\Omega^c}$ the above equation simplifies
\begin{equation*}
\begin{split}
	\norm{\pp {k+1} - \pp k}^2 & = \norm{(\qq {k})_{\Omega^c} - (\pp k)_{\Omega^c}}^2\\  
\end{split}
\end{equation*}
Summing both sides and using Lemma~\ref{fact6}, the result follows.	
\end{proof}


With the above preparation,  we are finally ready to establish the main convergence result in this 
subsection. 

\begin{theorem}\label{fact8}   There exist a subsequence of $(Y_k)_\Omega$ that converges, say without loss of generality, $(Y_k)_\Omega \rightarrow y^\star$. 
Assume that there are only finitely many rank-$r$ matrices $Y$ such that $P_\Omega(Y) = y^\star$.
Then there exist subsequences $\pp {k_j} $ and  $\qq {k_j}$ which converge, 
say  $Y^\star$ and $X^\star$ such that  
$$
\pp {k_j} \rightarrow X^\star \hbox{ and } \qq {k_j} \rightarrow Y^\star. 
$$ 
Furthermore, we have $X^\star|_{\Omega^c}=Y^\star|_{\Omega^c}$ and 
\begin{equation}
\label{mainresult2}
X^\star \in \mathcal{A}_\Omega \text{ and } 
\operatorname{rank}(Y^\star)\leq r_g. 
\end{equation}
\end{theorem}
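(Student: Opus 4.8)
The plan is to reduce the whole statement to a single compactness claim and then to pinpoint where the finiteness hypothesis is really needed. First I would collect the structural facts already proved. By Lemmas~\ref{fact1} and \ref{fact2} every iterate satisfies $(X_{k+1})_\Omega = M_\Omega$ and $(X_{k+1})_{\Omega^c} = (Y_k)_{\Omega^c}$, while $Y_k$ is the rank-$r_g$ singular value truncation of $X_k$, so $\operatorname{rank}(Y_k) \le r_g$ for every $k$. From Lemma~\ref{fact7} we have $\|(X_k - Y_k)_{\Omega^c}\| \to 0$, and since $(X_k)_{\Omega^c} = (Y_{k-1})_{\Omega^c}$ this is exactly $\|(Y_k - Y_{k-1})_{\Omega^c}\| \to 0$: the $\Omega^c$-increments of the $Y$-sequence vanish.

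The key reduction is that everything follows once we extract a subsequence along which the full matrices $Y_k$ converge, say $Y_{k_j} \to Y^\star$. Indeed, define $X^\star$ by $X^\star|_\Omega = M_\Omega$ and $X^\star|_{\Omega^c} = Y^\star|_{\Omega^c}$. Then $X^\star \in \mathcal{A}_\Omega$ by construction, and the matching $X^\star|_{\Omega^c} = Y^\star|_{\Omega^c}$ is immediate. Moreover $X_{k_j} \to X^\star$: on $\Omega$ we have $(X_{k_j})_\Omega = M_\Omega$, while on $\Omega^c$ we have $(X_{k_j})_{\Omega^c} = (Y_{k_j-1})_{\Omega^c}$, which converges to $Y^\star|_{\Omega^c}$ because $\|(Y_{k_j} - Y_{k_j-1})_{\Omega^c}\| \to 0$. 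Finally $\operatorname{rank}(Y^\star) \le r_g$ follows from closedness of the determinantal variety $\{Y : \operatorname{rank}(Y) \le r_g\}$. Thus it remains only to produce such a convergent subsequence of the full $Y_k$; since $(Y_k)_\Omega \to y^\star$ is already convergent (hence bounded), this reduces to showing that $(Y_k)_{\Omega^c}$ is bounded.

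I expect boundedness of $(Y_k)_{\Omega^c}$ to be the main obstacle, and it is precisely here that the finiteness hypothesis must enter. The natural attack is by contradiction: if $\|Y_k\| \to \infty$ along the subsequence, normalize $Z_k := Y_k / \|Y_k\|$. Each $Z_k$ has unit norm and rank $\le r_g$, so by compactness of the sphere and closedness of the rank variety a further subsequence gives $Z_k \to Z$ with $\|Z\| = 1$ and $\operatorname{rank}(Z) \le r_g$; and since $P_\Omega(Y_k) \to y^\star$ stays bounded, $P_\Omega(Z) = \lim P_\Omega(Y_k)/\|Y_k\| = 0$. Thus escape to infinity would produce a nonzero rank-$\le r_g$ matrix $Z \in \ker P_\Omega$, i.e. an asymptotic direction of the fiber over $y^\star$. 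I would then argue that, read over $\mathbb{C}$, the finiteness (zero-dimensionality) of the fiber $\{Y : \operatorname{rank}(Y)\le r_g,\ P_\Omega(Y)=y^\star\}$ excludes such a direction: passing to the projective closure of the determinantal variety, $Z$ is a fiber point at infinity, and properness of this projective variety would force a positive-dimensional fiber component, contradicting finiteness. Making this last implication fully rigorous — ruling out that the iterates drift to infinity along the rank variety while their $\Omega$-restriction converges — is the delicate step and the real content of the theorem; granting it, $(Y_k)_{\Omega^c}$ is bounded, Bolzano--Weierstrass yields $Y_{k_j} \to Y^\star$, and the reduction of the preceding paragraph delivers $X_{k_j}\to X^\star$ together with all three conclusions. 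The remaining pieces, namely continuity of $P_\Omega$, closedness of the rank condition, and the vanishing of the $\Omega^c$-increments, are routine.
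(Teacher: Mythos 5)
Your reduction of the theorem to a single boundedness claim is correct and in fact mirrors the paper's architecture exactly: the paper also gets $(Y_k)_\Omega$ bounded from Lemma~\ref{fact3.1} (via $\norm{(X_k)_\Omega-(Y_k)_\Omega}\le 2\sqrt{L}$ and $(X_k)_\Omega=M_\Omega$), then bounds the full sequence $Y_k$, extracts $Y_{k_j}\to Y^\star$ and $X_{k_j}\to X^\star$, identifies the $\Omega^c$ parts via Lemma~\ref{fact7}, and obtains (\ref{mainresult2}) from closedness of $\mathcal{A}_\Omega$ and $\overline{\mathcal{M}_{r_g}}$. The problem is that at the one step you yourself call ``the delicate step and the real content of the theorem'' you stop short, and the route you sketch would not close the gap even if pursued. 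Your normalization argument produces a nonzero $Z$ with $\operatorname{rank}(Z)\le r_g$ and $P_\Omega(Z)=0$, i.e.\ a point of the fiber over $0$ (equivalently, a point at infinity of the projective closure lying on the projectivized kernel of $P_\Omega$). The hypothesis, however, concerns only the fiber over $y^\star$, and the existence of such a $Z$ is entirely compatible with that fiber being finite: the positive-dimensional behavior you detect sits over $0$, not over $y^\star$, so no contradiction follows. Concretely, take $n=2$, $r_g=1$, $\Omega=\{(1,1),(1,2),(2,1)\}$: then $Z=e_2e_2^\top$ is a nonzero element of $\overline{\mathcal{M}_1}\cap\ker P_\Omega$, yet the fiber over any $y^\star$ with first entry $a\neq 0$ is the single matrix with $(2,2)$-entry $bc/a$. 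Worse, taking rank-one iterates with $(Y_k)_\Omega=(1/k,b,c)$, $bc\neq 0$, forces $(Y_k)_{2,2}=kbc\to\infty$ while $(Y_k)_\Omega$ converges to $(0,b,c)$, whose fiber is empty and hence finite --- so the implication ``finite fiber over $y^\star$ implies no escape to infinity,'' which your properness heuristic is meant to deliver, can genuinely fail in the borderline situation; it is not a routine completion.

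For comparison, the paper's mechanism at this step is entirely different: it treats the unknown entries $\mathbf{x}\in\mathbb{R}^{n^2-m}$ of $Y_k$ as variables, forms the polynomial system of vanishing $(r_g+1)\times(r_g+1)$ minors with coefficients determined by $Y_k|_\Omega$, reduces it to a triangular system $f_1(x_1)=0$, $f_2(x_1,x_2)=0,\dots$ (citing \cite{CM12}, \cite{AM99}), and then bounds the solutions iteratively by standard coefficient bounds for univariate polynomial roots, the coefficients being controlled by $\norm{(Y_k)_\Omega}\le C_1$; this yields $\norm{Y_k}\le C_2$ directly, with no argument at infinity. One may debate how airtight that argument is (univariate root bounds require control of leading coefficients, which is exactly what degenerates in the $2\times 2$ example above), but it is a concrete mechanism, whereas your proposal explicitly defers the crux and the specific contradiction you propose does not follow from the stated hypothesis. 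As submitted, then, your proof is incomplete precisely at the claim that carries the theorem.
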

\begin{proof} 
By Lemma~\ref{fact3.1},
$\norm{\pp k - \qq k} \rightarrow \sqrt{L}$, we see that the 
sequence $\norm{M_\Omega - (\qq k)_\Omega} = \norm{(\pp k)_\Omega - (\qq k)_\Omega}\le 2\sqrt{L}$ for all 
$k\ge 1$ without loss of generality. 
It follows that $\norm{(\qq k)_\Omega}, k\ge 1$ are a bounded sequence and hence, 
$\|(\qq k)_\Omega\|\le C_1<\infty$ for a positive constant $C_1$ and $(\qq k)_\Omega
\to y^*$ without loss of generality

Under the assumption that there are finitely many $Y\in \overline{{\cal M}_{r_g}}$ such that $P_\Omega(Y)=y^*$, 
we next claim that $Y_k, k\ge 1$ are bounded. Indeed, 
for any matrix $Y\in \overline{{\cal M}_{r_g}}$, the set of matrices with rank $\le r_g$, 
if we write the entries in $Y_{\Omega^c}$ 
as variables, say ${\bf x} \in \mathbb{R}^{n^2-m}$  while the entries $Y|_\Omega$ are known, 
the determinant of any $(r+1)\times (r+1)$ 
minor of $Y$ will be zero and is a polynomial function of variables ${\bf x}$ with coefficients 
based on the known entries $Y|_\Omega$. 
Thus, vanishing of all $(r+1)\times (r+1)$ minors would form a set of $({n\choose r_g+1})^2$ polynomial equations with variables 
${\bf x}$ and coefficients from entries in $Y|_\Omega$. 
 By our assumption, this set of polynomial equations have \emph{finitely} many solutions when the coeffficients of the system is derived from the $\Omega$ entries of $y^\star$.  Since the zeros of 
these polynomial equations are continuously dependent on the coefficients of polynomial functions, we see that there are finitely many solutions to the polynomial system when coefficients are derived from $(Y_k)_\Omega$ that are sufficiently close to $y^\star$ . We can
bound the zeros by using the coefficients. More precisely, these polynomial equations can 
be reduced to a triangular system (cf. \cite{CM12}),  that is,  writing ${\bf x}=(x_1, \cdots, x_{n^2-m})$
for a fixed order of these unknown entries, 
\begin{equation}
\label{triangular}
\begin{cases}
&f_1(x_1) = 0, \cr
&f_2(x_1,x_2) =0, \cr
& \cdots   \cdots ,\cr
&f_{n^2-m}(x_1, \cdots, x_{n^2-m})=0
\end{cases}
\end{equation}
for a set of polynomial functions $f_1, \cdots, f_{n^2-m}$ by using one of the computational methods 
discussed in  \cite{AM99}. Certainly, for each $k\ge 1$, these $f_i$ are dependent on $k$ in the sense
that the coefficients of $f_i$ are dependent on the values  $Y_k|_\Omega$.
Then we can use any standard bound of the zeros of univariate polynomials to find a bound 
of these variables ${\bf x}$ iteratively from 
the reduced system above. Indeed, the bound on $x_1$  of this system is obtained 
by $\max\{1, |a_i|,i=1, \cdots, r+1\}$ with coefficients 
$a_i$ of the first univariate equation $f_1=0$ which are dependent on $Y_k|_\Omega$.  Since
$Y_k|_\Omega$ is bounded by $C_1$, we see $x_1$ is bounded in terms of $C_1$.  
Then $x_2$ can be bounded from  the second equation which is now univariate if assuming $x_1$ 
is known. $x_2$ can be bounded in terms of the coefficients of $f_2$ and the bound on $x_1$. 
And so on. In summary, all the entries of $Y_k$ with indices in $\Omega^c$ can be bounded 
in terms of the entries in $Y_k|_{\Omega}$.  In other words, $\|Y_k\|\le 
C_2<\infty$ with a positive constant $C_2$ for all $k\ge 1$ which is dependent on $C_1$ above. 

It now follows that there exists a subsequence $Y_{k_j}$ which converges to $Y^\star$.  
Next by (\ref{fact3.2}), $\pp k$ are bounded because of $\qq k$ are bounded and 
hence, $\pp k,, k\ge 1$ have a convergent subsequence
and $\pp {k_j} \to X^\star$ when $k_j\to \infty$ without loss of generality. 
By Lemma~\ref{fact7}, we have $(Y^\star)_{\Omega^c}=(X^\star)_{\Omega^c}$. 
Finally, it is easy to see (\ref{mainresult2}) which follows from the facts that set $\mathcal{A}_\Omega$ and 
set $\overline{{\cal M}_{r_g}}$ are closed sets.	These complete the proof.   
\end{proof}

Although we do not know  how to check if there are only finitely many
matrices $Y\in\overline{{\cal M}_r}$ satisfying $(Y)_\Omega= {\bf x}$, we can see if the norms of $Y_k$ 
are bounded or not from the algorithm. If they are bounded, the conclusions of Theorem~\ref{fact8} hold. 
In general,  $X^\star \not= Y^\star$ as $r_g$ is not equal to $\hbox{rank}(M)$. For example, when $r_g<
\hbox{rank}(M)$, $Y^*$ will not be equal to $M$  and hence, $Y^\star$ does not satisfy $(Y^\star)_\Omega
=M_\Omega$ in general. Of course 
 $X^*$ satisfies the interpolation conditions $(X^*)_\Omega =M_\Omega$, but 
$\hbox{rank}(X^\star)$ may be bigger than $r_g$. That is, informally speaking, when $r_g<\hbox{rank}(M)$, the 
chance of $X^\star=M$ 
is bigger than the chance $Y^*=M$.  On the other hand, when $r_g>\hbox{rank}(M)$, 
there are more possibilities of matrices with rank $=r_g$ satisfying the interpolatory conditions. 
Anyway, if $X^\star - Y^\star\not=0$, the guess $r_g$ is not correct and we need to increase $r_g$. 

Finally, even though $X^*\not= Y^*$ in general, they satisfy the following nice property. 
\begin{proposition}
	Let $X^\star$ and $Y^\star$ be matrices in (\ref{mainresult2}) Then,
	$$
	Y^\star (X^\star)^\top = Y^\star (Y^\star)^\top
\hbox{ 	and } 
	(Y^\star)^\top X^\star = (Y^\star)^\top Y^\star. 
	$$
\end{proposition}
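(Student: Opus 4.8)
The plan is to establish both identities at every finite iterate and then pass to the limit along the subsequence produced in Theorem~\ref{fact8}. The whole argument rests on the orthogonality relation of Lemma~\ref{fact11}, namely that
$$
\langle A Y_k + Y_k B, X_k - Y_k\rangle = 0 \qquad \text{for all } A, B \in \mathbb{R}^{n\times n},
$$
where $\langle P, Q\rangle = \operatorname{Trace}(P^\top Q)$ denotes the Frobenius inner product. Since $A$ and $B$ appear additively and may be chosen freely and independently, I would split this single relation into its two constituent families by setting first $B = 0$ and then $A = 0$.

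Setting $B = 0$, for every $A$ one has $\operatorname{Trace}\big(Y_k (X_k - Y_k)^\top A\big) = 0$ after a cyclic permutation of the trace; since $A$ is arbitrary this forces $Y_k(X_k - Y_k)^\top = 0$, that is,
$$
Y_k X_k^\top = Y_k Y_k^\top.
$$
Setting $A = 0$, for every $B$ one has $\operatorname{Trace}\big(B^\top Y_k^\top (X_k - Y_k)\big) = 0$, which forces $Y_k^\top (X_k - Y_k) = 0$, that is,
$$
Y_k^\top X_k = Y_k^\top Y_k.
$$
Thus both identities already hold exactly for each $k$, relating $X_k$ and its rank-$r_g$ truncation $Y_k = P_{\mathcal{M}_{r_g}}(X_k)$, and not merely in the limit.

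It then remains to invoke Theorem~\ref{fact8}, which supplies a single subsequence with $X_{k_j} \to X^\star$ and $Y_{k_j} \to Y^\star$. Because matrix multiplication and transposition are continuous, and because the two displayed equalities hold for every index $k$ and in particular for each $k_j$, passing to the limit along $k_j$ immediately yields $Y^\star (X^\star)^\top = Y^\star (Y^\star)^\top$ and $(Y^\star)^\top X^\star = (Y^\star)^\top Y^\star$, which is exactly the claim.

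The step I would treat most carefully — and the only place there is anything to get wrong — is the translation of each scalar condition $\operatorname{Trace}(C A) = 0$ (respectively $\operatorname{Trace}(B^\top D) = 0$) into the vanishing of the matrix $C$ (respectively $D$): one uses that $\operatorname{Trace}(C A) = 0$ for all $A$ implies $C = 0$, by taking $A = C^\top$ and reading off $\|C\|_F^2 = 0$, together with the correct cyclic rearrangement that isolates $A$ (or $B^\top$) on one side of the trace. Everything else — the specialization of $A, B$ and the final continuity argument — is entirely routine.
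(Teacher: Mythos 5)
Your proof is correct and takes essentially the same approach as the paper's: both arguments rest on the orthogonality relation of Lemma~\ref{fact11} together with the nondegeneracy of the Frobenius pairing, choosing $A$ and $B$ independently to force $Y(X-Y)^\top = 0$ and $Y^\top(X-Y) = 0$. The only difference is ordering --- the paper applies Lemma~\ref{fact11} directly at the limit pair $(X^\star, Y^\star)$ and then splits, while you split at each iterate $k$ and pass to the limit along the subsequence of Theorem~\ref{fact8}, which in fact makes explicit the continuity step the paper leaves implicit.
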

\begin{proof}
Using Lemma~\ref{fact11}, we obtain
	$$
	\langle AY^\star + Y^\star B, X^\star-Y^\star \rangle = 0
	$$ 
for all $A,B \in \mathbb{R}^{n \times n}$	which implies
$$
\langle A^\top, Y^\star(X^\star-Y^\star)^\top \rangle +  \langle B, 
(Y^\star)^\top(X^\star-Y^\star) \rangle = 0
$$ 
for all $A, B \in \mathbb{R}^{n \times n}$. 	Hence, 
	$$
	Y^\star(X^\star-Y^\star)^\top = 0
\hbox{ 	and } 
	(Y^\star)^\top (X^\star-Y^\star) = 0.
	$$ 
Rearranging the above equations, we obtain the required result. 
\end{proof}

\section{Numerical Results}
In this section, we first present some results based on the simple initial guess $X_0=P_\Omega(M)$. The 
robustness of Algorithm~\ref{alg1} was demonstrated in \cite{JZLS17}. We shall not repeat the similar numerical 
experimental results. We mainly present numerical results based on a good strategy to choose quality 
initial guesses which lead even better performance of Algorithm~\ref{alg1}. 
That is, we recall an efficient computational algorithm called OR1MP for matrix completion in \cite{WL15}. 
We use the OR1MP algorithm
to get a completed matrix which serves as an initial guess $X_0$. Our numerical experimental results show that 
this new initial guess gives more accurate completion. We measure the error matrices by using the maximum 
norm of all entries of the matrices. 
One can see that the maximum norm error is very small and hence, the recovered matrix is  very accurate. 
We shall also use Algorithm~\ref{alg1} to recover images from their partial pixel values and demonstrate that
Algorithm~\ref{alg1} is able to recover the images better visually.  Thus, this section is divided into two
subsections.

\subsection{Numerical Results: Initial Matrices from the OR1MP Algorithm}
In all the  experiments in this subsection, we used the initial matrix $X_0$ from the OR1MP algorithm 
in \cite{WL15} based on the $P_\Omega(M)$ using a few iterations, 
that is, $X_0=\hbox{OR1MP}(P_\Omega(M))$. 

\begin{example} In this example, we show the maximum missing rate that Algorithm~\ref{alg1} can recover a
matrix when its rank is fixed. Together we show the computational times. 
Abbreviations used in Tables in this example are as follows:\\
$\textbf{M.R.} = \text{Missing Rate, the fraction of missing entries} = \frac m {n^2}$,\\
$\textbf{O.R.} = \text{oversampling ratio} = \frac m {2nr-r^2}$,\\
$\textbf{M.C.E.} = \text{Maximum Component Error} = \max_{i,j} \abs{(X_{recovered})_{i,j}-M_{i,j}}$,\\
$\textbf{A.R.E.} = \text{Average Relative Error} = \norm{P_\Omega(Y_k)-P_\Omega(M)}_F
/\norm{P_\Omega(M)}_F$,\\
\begin{table}[!htbp]
	\caption{Numerical results based on $100 \times 100$ matrices averaged over 20 runs} 
	\centering 
	\begin{tabular}{c l c c c c c c} 
		\hline\hline 
		Rank & M.R.  & O.R. & M.C.E & A.R.E & Time  \\ 
		\hline 
		2 & 0.80 & 5 & 9.5202e-04 & 3.7217e-05 & 0.4171  \\ 
		5 & 0.61 & 4 & 6.0350e-04 & 1.0894e-05 & 0.2648 \\
		10 & 0.43 & 3 &  4.4343e-04 & 4.4977e-06 & 0.2778 \\
		20 & 0.28 & 2 & 6.0317e-04 & 2.0486e-06 & 0.8492\\
		35 & 0.25 & 1.3 & 1.2698e-06 & 0.0015 & 2.8798\\
		50 & 0.025 & 1.3 & 0.0013 & 7.2350e-07 & 1.2605\\ [1ex] 
		\hline 
	\end{tabular}
	\label{100cross100table} 
\end{table}

\begin{table}[!htbp]
	\caption{Numerical results based on  $250 \times 250$ matrices averaged over 20 runs} 
	\centering 
	\begin{tabular}{c c c c c c c c} 
		\hline\hline 
		Rank & M.R.  & O.R. & M.C.E & A.R.E & Time  \\  
		\hline 
		10 & 0.76 & 3 & 6.6930e-04 & 3.0595e-06 & 1.2283 \\ 
		20 & 0.53 & 3 & 2.2215e-04 & 1.0460e-06 & 1.3495 \\
		50 & 0.28 & 2 &  2.0560e-04 & 3.3083e-07 & 2.2624 \\
		75 & 0.18 & 1.6 & 2.6951e-04 & 2.0955e-07 & 4.5208 \\
		100 & 0.168 & 1.3 & 3.9345e-04 & 1.6690e-07 & 14.3622 \\
		125 & 0.025 & 1.3 & 6.2102e-04 & 1.1374e-07 & 8.8464 \\ [1ex] 
		\hline 
	\end{tabular}
	\label{250cross250table} 
\end{table}

\begin{table}[!htbp]
	\caption{Numerical results based on $500 \times 500$ matrices averaged over 10 runs} 
	\centering 
	\begin{tabular}{c c c c c c} 
		\hline\hline 
		Rank & M.R.  & O.R. & M.C.E & A.R.E & Time  \\  
		\hline 
		25 & 0.70 & 3 & 2.8565e-04 & 5.5169e-07 & 4.5253\\ 
		50 & 0.62 & 2 & 1.6818e-04 & 2.4458e-07 & 10.7270 \\
		100 & 0.28 & 2 &  8.6199e-05 & 8.0210e-08 & 11.3425 \\
		150 & 0.23 & 1.5 & 1.2031e-04 & 5.6053e-08 & 35.3097\\
		200 & 0.04 & 1.5 & 1.5896e-04 & 3.2623e-08 & 24.1821 \\
		250 & 0.0250 & 1.3 & 3.3090e-04 & 2.8449e-08 & 46.9267\\ [1ex] 
		\hline 
	\end{tabular}
	\label{500cross500table} 
\end{table}

\begin{table}[!htbp]
	\caption{Numerical results based on $1000 \times 1000$ matrices averaged over 10 runs} 
	\centering 
	\begin{tabular}{c c c c c c c c} 
		\hline\hline 
		Rank & M.R.  & O.R. & M.C.E & A.R.E & Time  \\ 
		\hline 
		50 & 0.70 & 3 & 6.8718e-05 & 1.3722e-07 & 30.1813  \\ 
		100 & 0.52 & 2.5 & 3.7074e-05 & 5.2213e-08 & 50.0631 \\
		200 & 0.10 & 2.5 &  2.6120e-05 & 1.2338e-08 & 42.7043 \\
		300 & 0.05 & 1.85 & 5.1339e-05 & 1.0448e-08 & 83.9782 \\
		400 & 0.04 & 1.5 & 7.1099e-05 & 8.0391e-09 & 186.2271 \\
		500 & 0.0025 & 1.33 & 2.4592e-04 & 6.5708e-09 & 226.3912 \\ [1ex] 
		\hline 
	\end{tabular}
	\label{1000cross1000table} 
\end{table}
\end{example}

\begin{example}
Next we provide another tables to show that our algorithm is very 
effective in recovering the original matrix. We let the missing rate 
$=0.1, 0.2, \cdots, 0.9$ and find the largest rank our algorithm can 
complete within maximum norm error $<1e-3$, that is, every entry of
the completed matrix is
accurate to the first three digits. That is, for a fixed missing rate $\delta$, we randomly find the known indices set $\Omega$ with 
$|\Omega|/(n^2)=1-\delta$ and then we randomly generate a matrix 
$M$ of size $n\times n$ with rank $r\ge 1$. We   
use $M_\Omega$, $\Omega$, and $r$ to recover $M$ (the stopping 
criterion is $1e-5$ of the consecutive iterations),  check if 
the completed matrix $\widehat{M}$ approximates $M$ in the maximum norm 
within $\epsilon =1e-3$, and repeat the computation in 10 times.
If all 10 computations are able to accurately recover $M$,
 we advance $r$ by $r+1$ and repeat the above procedures until the 
accurate recovery is less than 10 times for a fixed $r$. 
In this way, we can 
find the largest rank for a fixed missing rate. As we  use two initial 
guesses, we summarize the computational results in 
Table~\ref{maxranktable}.  

\begin{table}[!htbp]
\centering
\begin{tabular}{|c|c|r|r|r|r|r|r|r|r|r|} 
\hline 
missing rates &  0.1 & 0.2 & 0.3 & 0.4 & 0.5 & 0.6 & 0.7 & 0.8 & 0.9 &  \cr\hline
largest ranks &   30 & 16 &   19 &   14 &    9 &    7&    5 &    2 &  1 & OR1MP  \cr \hline
largest ranks &   13&  14 &   13 &   10&    9 &    7&    3&    2 &    1 & $M_\Omega$\cr \hline 
\end{tabular}
\caption{maximum ranks are based on matrices of size $100\times 100$ with initial values from OR1MP (second row) and 
from the initial matrix $M_\Omega$ (third row) \label{maxranktable}} 
\end{table}

From Table~\ref{maxranktable}, we can see that using OR1MP algorithm to generate an initial guess 
for Algorithm~\ref{alg1} 
is much better when the rates of missing entries are small. When the rate 
of missing entries are large, the performance is similar.  
If this table is compared with the ones in \cite{WCCL16}, we remind the reader that 
we use a much tougher criterion $\epsilon=1e-3$ in the maximum norm to 
find the maximum rank than the relative Frobenius norm error used in \cite{WCCL16}. 

If we use the standard relative Frobenius norm error, we have largest ranks that Algorithm~\ref{alg1} 
can recover 100\% times listed in Table~\ref{relativeerror} with two different initial guesses. 
We can see that the performance increases greatly when using a completed matrix from OR1MP algorithm. 
\begin{table}[!htbp]
\centering
\begin{tabular}{|c|c|r|r|r|r|r|r|r|r|r|c|} 
\hline 
missing rates & 0.1 & 0.2 & 0.3 & 0.4 & 0.5 & 0.6 & 0.7 & 0.8 & 0.9 & \cr\hline 
largest ranks & 132 &  106 &   83 &  68 &   41  &   40 &   25 &   14 &    4 & OR1MP \cr \hline 
largest ranks &   33 &  29 &  25  &  24 &   19  &   15 &   11 &    8 &    4 & $M_\Omega$\cr \hline 
\end{tabular}
\caption{maximum ranks are based on matrices of size $200\times 200$ with initial values from OR1MC (second row)
and from the initial matrix $M_\Omega$ (third row)\label{relativeerror}}
\end{table} 
\end{example}

\subsection{Image Recovery from Partial Pixel Values}
We shall use Algorithm~\ref{alg1} to recover images from partial 
pixel values. 

\begin{example} 
Let us use the standard  images knee, penny and thank as testing matrices  
of pixel values. The image knee is of size $691\times 691$. 
The image penny is a matrix of size $128\times 128$ and the image thank 
is of size $300\times 300$. For image knee, 
we use a missing rate $0.85$ to generate $M_\Omega$  and use rank=25 to find 
an approximation of the  image knee by using the well-known matrix completion OR1MP algorithm in \cite{WL15},
then we feed the approximation as an initial guess to Algorithm~\ref{alg1} to get a better approximation. 
Also we use the same known entries $M_\Omega$ as an initial guess in  our Algorithm~\ref{alg1} 
to find an approximation of the image directly.   All these images are shown  in Figure~\ref{knee}. 
We do the same for the  images penny and thank. See  Figures~\ref{penny} and ~\ref{thanks}.  
Visually, we can see that starting from an initial guess obtained from the OR1MP algorithm, 
our Algorithm~\ref{alg1} produces a much better approximation to the image. For image
penny, we are able to see the face of Lincoln and the word as well as 
number 1984 are much cleaner although the root-mean square error (RMSE) may not be better.  
Many images have been experimented with similar performance. 
\begin{figure}[htpb]
\centering
\begin{tabular}{ccc}
\includegraphics[scale=0.18]{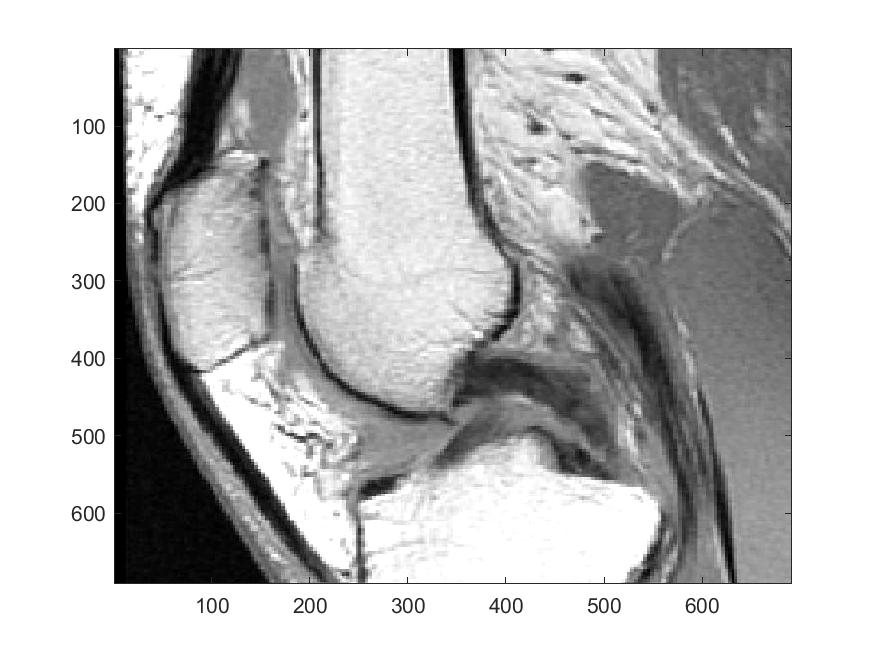} & \includegraphics[scale=0.18]{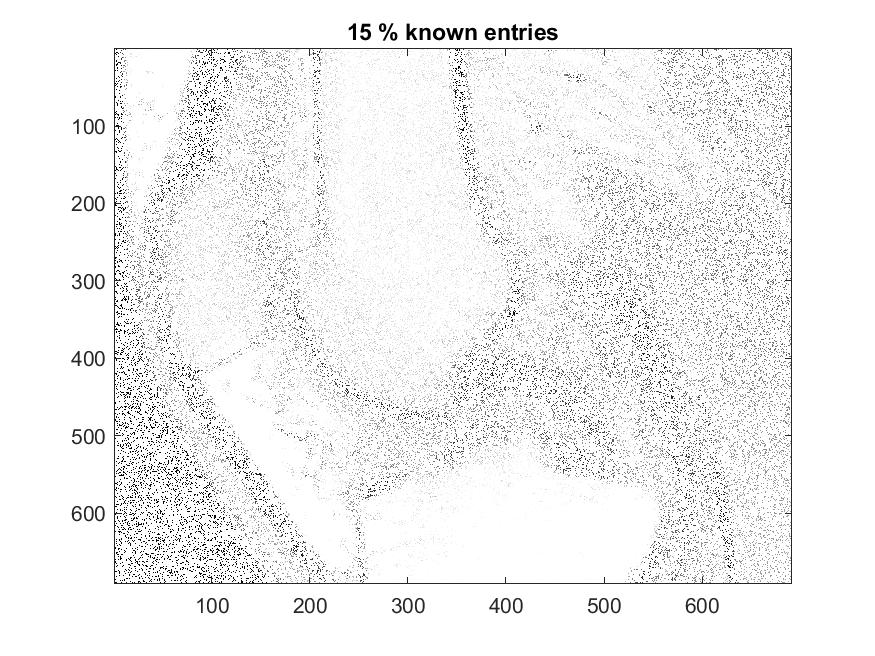} & \cr
\includegraphics[scale=0.18]{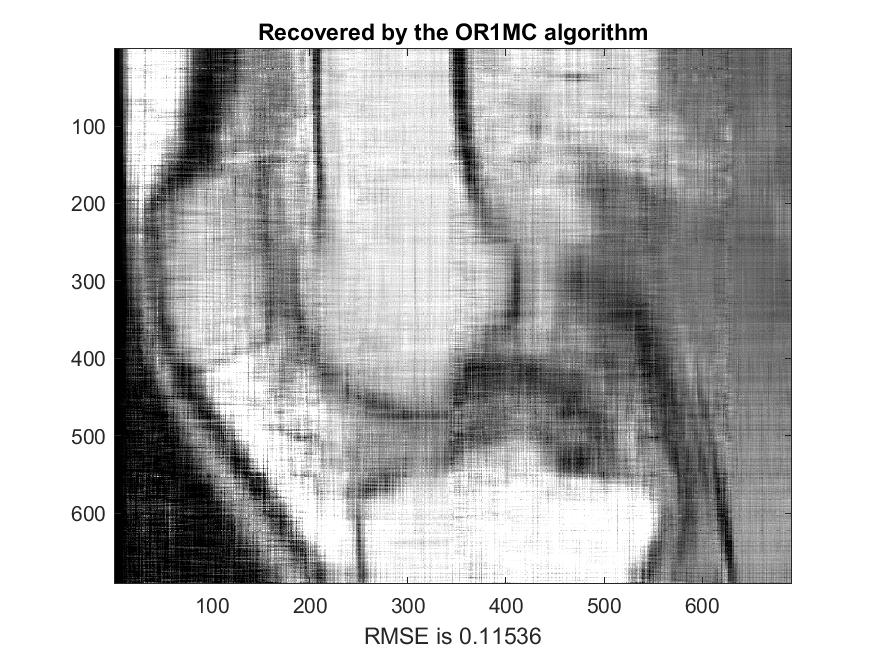} & \includegraphics[scale=0.18]{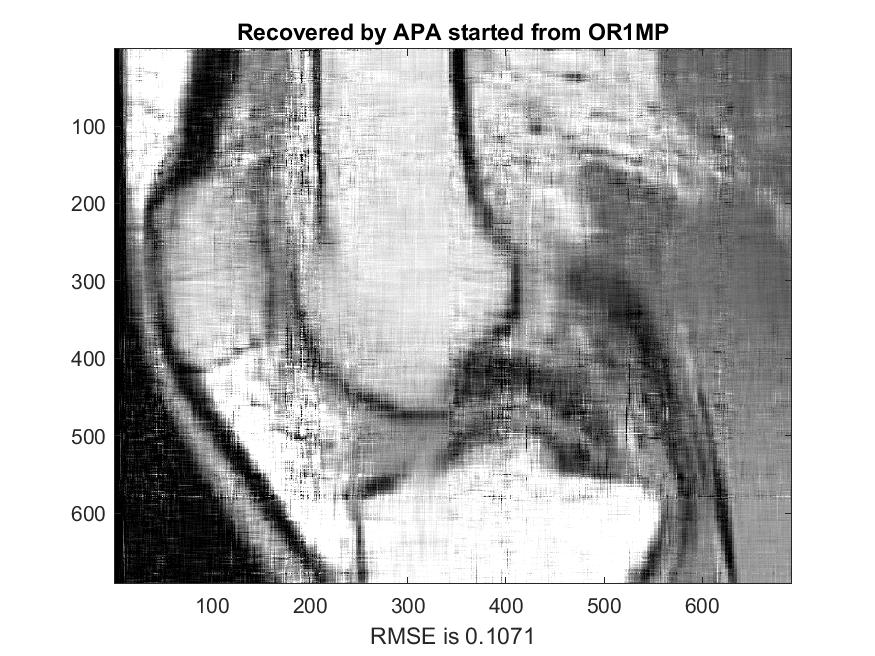}
& \includegraphics[scale=0.18]{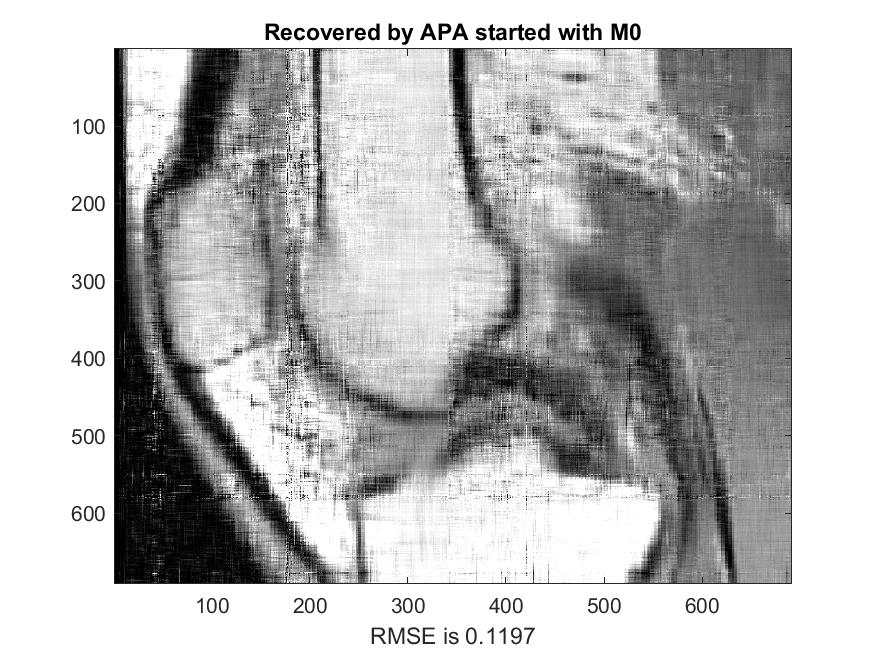}\cr
\end{tabular}
\caption{The top row: The original image and the image of 15\% known entries; The bottom row: 
The outputs from Algorithm OR1MP, Algorithm~\ref{alg1} with initial guess from the Algorithm OR1MP and
Algorthm~\ref{alg1} from the 15\% known entries based on rank 25. \label{knee}}		
\end{figure}


\begin{figure}[htpb]
\centering
\begin{tabular}{ccc}
\includegraphics[scale=0.18]{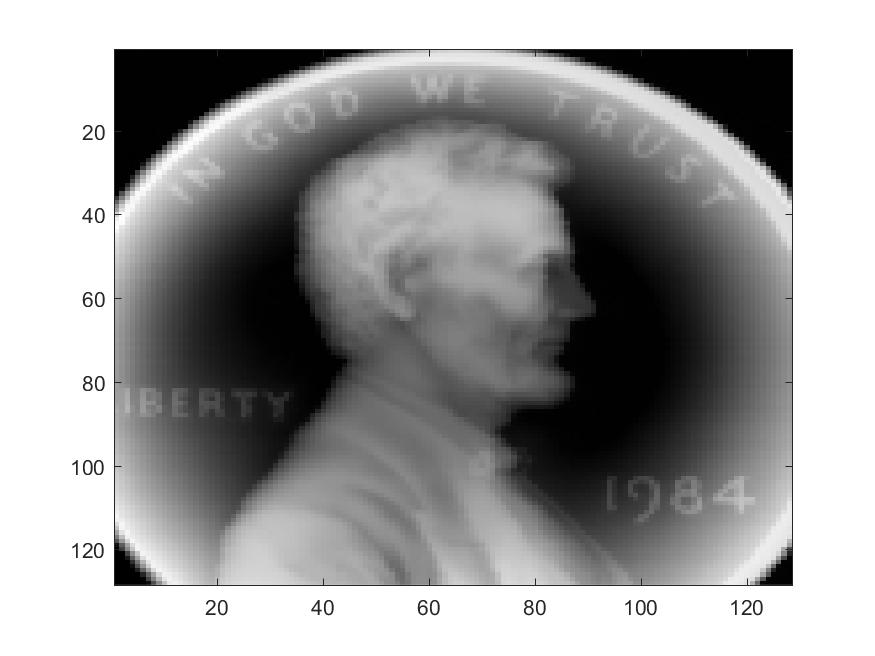} & \includegraphics[scale=0.18]{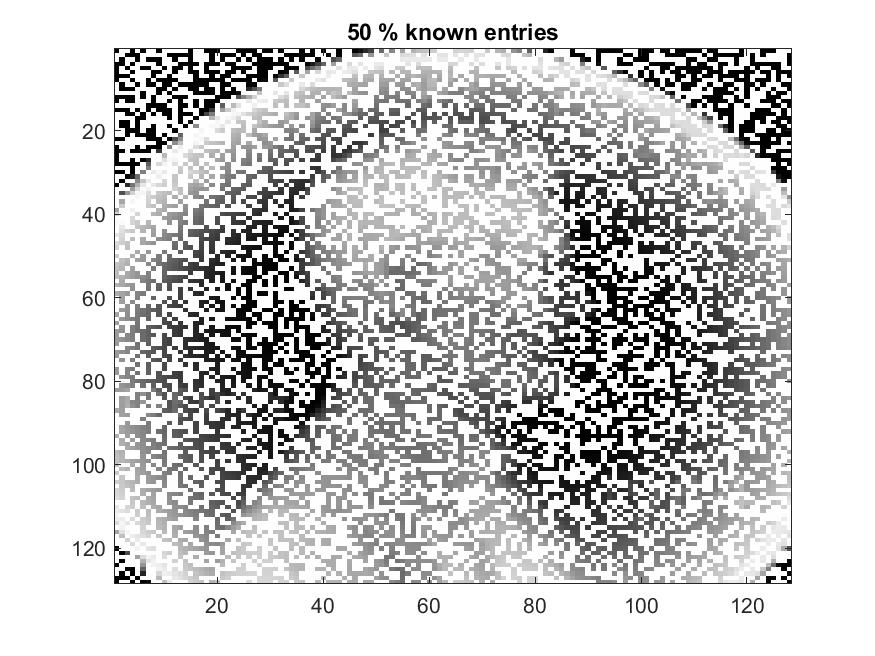}&\cr
\includegraphics[scale=0.18]{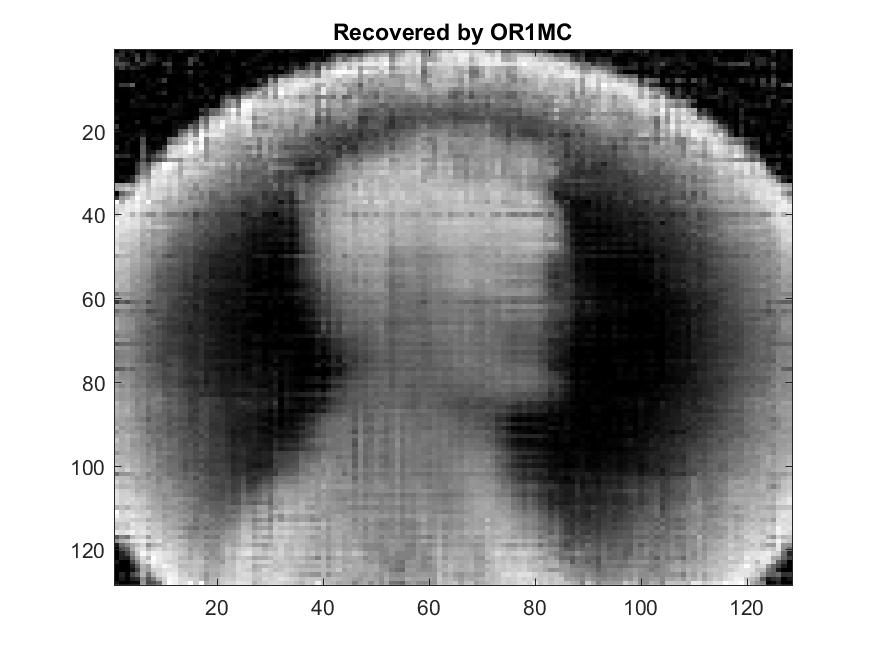} & \includegraphics[scale=0.18]{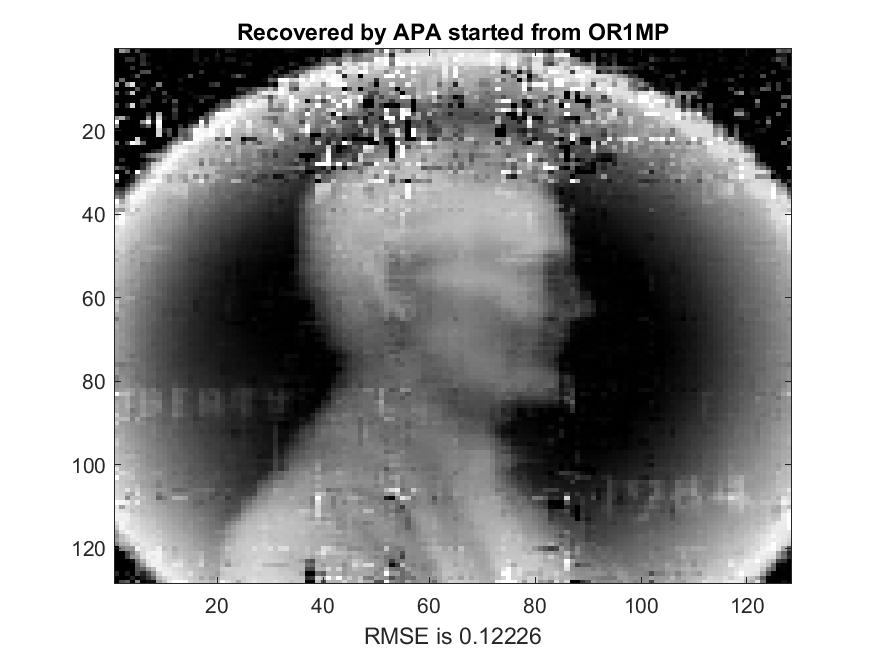}
& \includegraphics[scale=0.18]{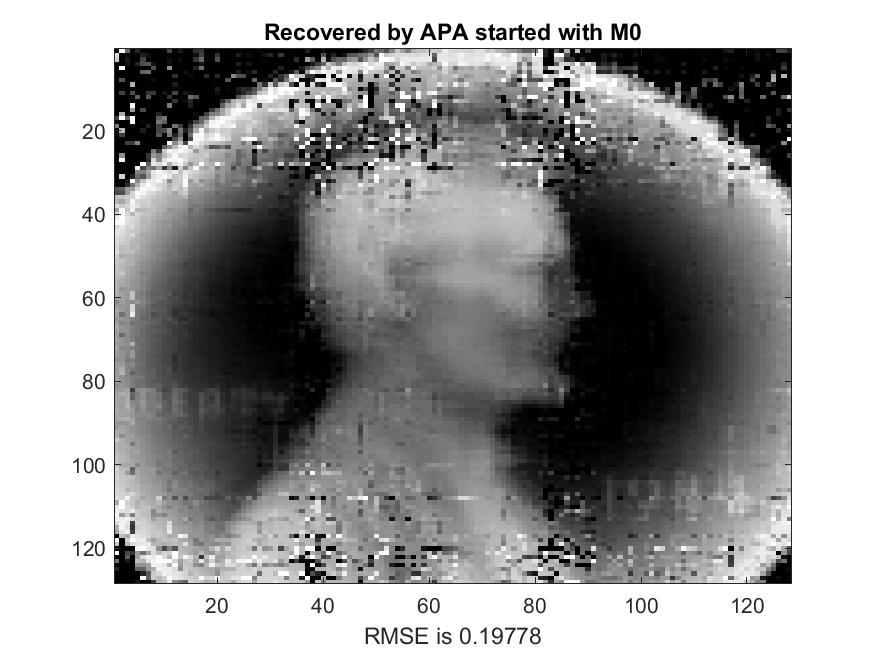}\cr
\end{tabular}
\caption{The top row: The original image and the image of 50\% known entries; The bottom row: 
The outputs from Algorithm OR1MP, Algorithm~\ref{alg1} with initial guess from the Algorithm OR1MP and
Algorthm~\ref{alg1} from the 50\% known entries based on rank 25.  \label{penny}}		
\end{figure}  

\begin{figure}[htpb]
\centering
\begin{tabular}{ccc}
\includegraphics[scale=0.18]{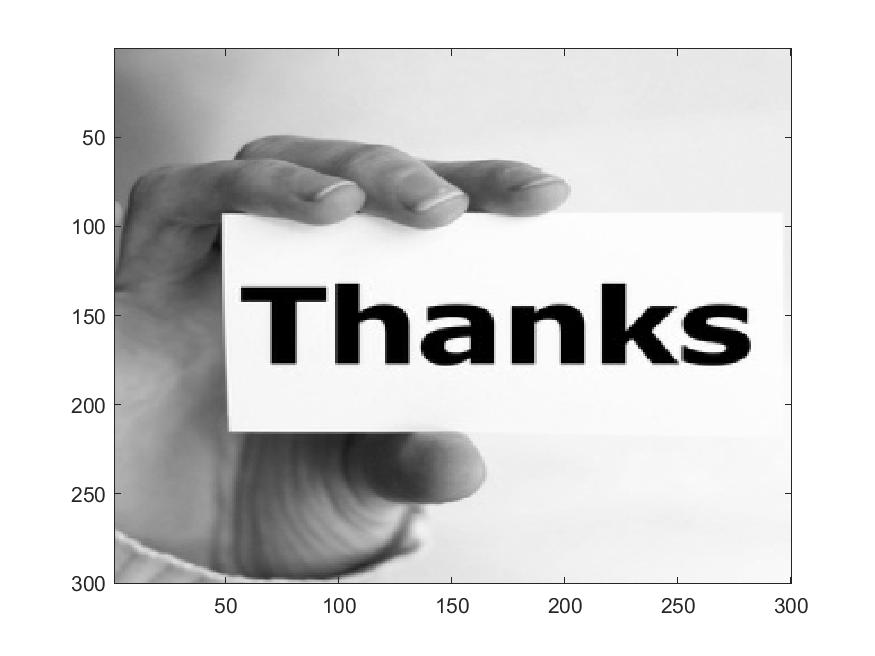} & \includegraphics[scale=0.18]{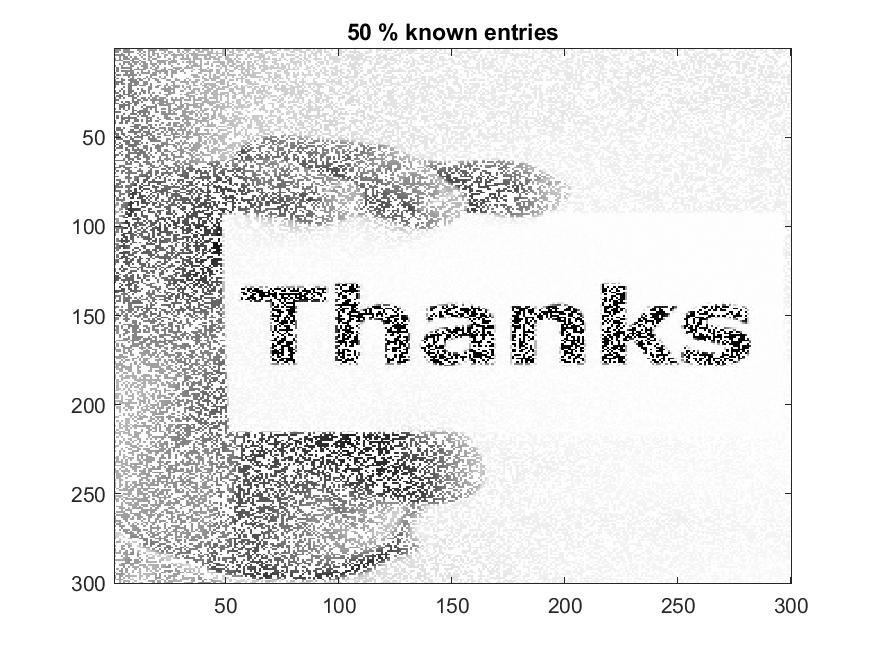} &\cr
\includegraphics[scale=0.18]{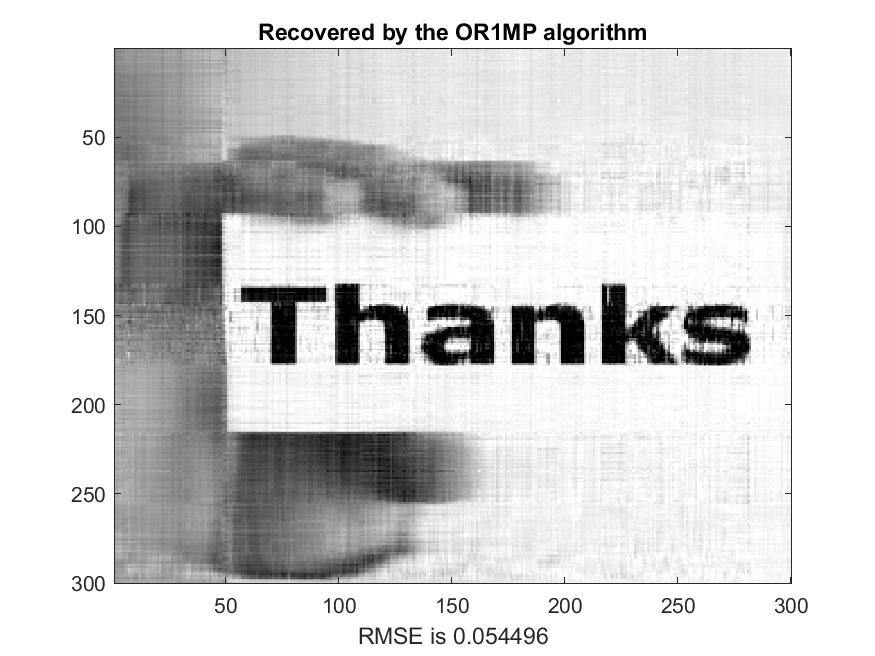} & \includegraphics[scale=0.18]{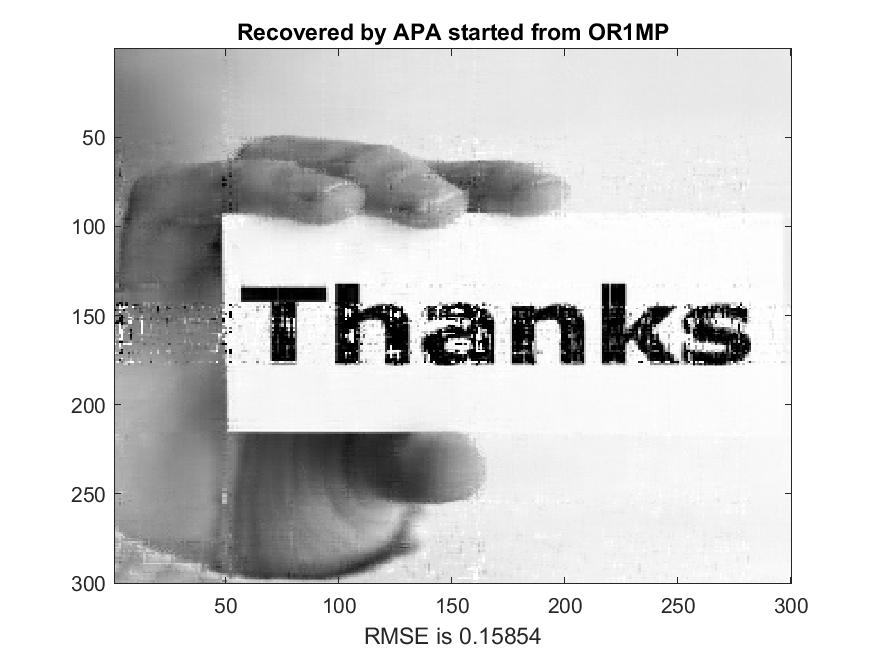}& 
\includegraphics[scale=0.18]{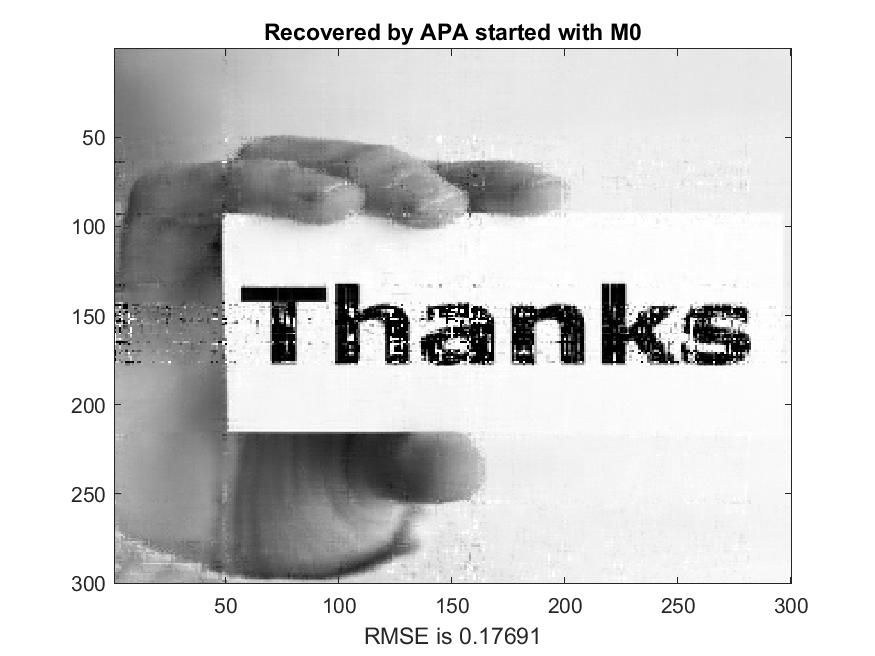}\cr
\end{tabular}
\caption{The top row: The original image and the image of 50\% known entries; The bottom row: 
The outputs from Algorithm OR1MP, Algorithm~\ref{alg1} with initial guess from the Algorithm OR1MP and
Algorthm~\ref{alg1} from the 50\% known entries based on rank 25.  \label{thanks}}		
\end{figure}  
\end{example}

\section{Alternating Projection Algorithm for Sparse Solution Recovery Problem}
In this section, we will use the same ideas of alternating projection discussed in the previous section  
to study the following classical problem in the area of compressed sensing:

\begin{align}\label{CS}
	&\underset{X}{\operatorname{minimize}}\quad \norm{X}_0 \\
	&\operatorname{subject\;to} \quad 
	 A {\bf x} = {\bf b},
\end{align}	
where $A \in \mathbb{R}^{n \times N}, {\bf x}\in \mathbb{R}^N, {\bf b} \in \mathbb{R}^{n}, n << N$ and 
$\norm{ {\bf x} }_0$ is the $\ell_0$ quasi-norm 
of a vector ${\bf x}$. Recall that the  $\ell_0$ quasi-norm of a vector is the number of non-zero components of the vector.
Let $\mathcal{L}_s(\mathbb{R}^N)$ denote the collection of all $s-$sparse vectors in $\mathbb{R}^N$,
$$
\mathcal{L}_s(\mathbb{R}^N) := \left\{x \in \mathbb{R}^N \mid \quad \norm{x}_0 = s \right\}
$$
and $\mathcal{P}_{\mathcal{L}_s}$ and $\mathcal{P}_{\mathcal{A}}$ denote the projection onto the set $\mathcal{L}_s(\mathbb{R}^N)$ 
and the affine space $\mathcal{A}:=\{{\bf x}:  A {\bf x} ={\bf b}\}$, respectively. It is easy to know $\mathcal{A}= \hbox{Null}(A)+
{\bf x}_0$, where ${\bf x}_0\in \mathbb{R}^N$ satisfies $A{\bf x}_0= {\bf b}$. 
Note that the projection $\mathcal{P}_{\mathcal{L}_s}({\bf x}_k)$ can be computed easily 
by setting the smallest $n-s$ components of the vector  ${\bf x}_k$ to zero. 

Our algorithm can be stated as follows:
 
\begin{algorithm}[H]
\label{alg2}
	\KwData{Sparsity s of the solution ${\bf x}_\star$, the tolerance $\epsilon$ whose default value is 1e-6}
	\KwResult{${\bf x}_k$ a close approximation of ${\bf x}_\star$}
	Initialize ${\bf x}_0$ to a random vector in the affine space $\mathcal{A}$\;
	\Repeat{The smallest $n_2-s$ components of ${\bf x}_{k+1}$ have magnitude less than $\epsilon$}{
		\textbf{Step 1:} ${\bf y}_k = \mathcal{P}_{\mathcal{L}_s}({\bf x}_k)$\\
		\textbf{Step 2:} ${\bf x}_{k+1} = \mathcal{P}_{\mathcal{A}}({\bf y}_k)$\;
	}
\caption{Alternating Projection Algorithm for $\ell_0$ Minimization}
\end{algorithm}

We first discuss the convergence of Algorithm~\ref{alg2}. Then we shall present its numerical performance in the next section. As a good 
initial guess is very important to have a quick convergence, we shall explain a few approaches to obtain reasonable initial guesses. 

\subsection{Convergence of Algorithm~\ref{alg2}}
We begin with some elementary results.
\begin{lemma}
Let	$\mathcal{L}_s(\mathbb{R}^n)$ be the collection defined as follows. 
$$
	\mathcal{L}_s(\mathbb{R}^n) = \mathop{\bigsqcup}_{\mathcal{I}}  \{x \in \mathbb{R}
	^n \mid x_j = 0 \quad \forall j \in \mathcal{I}^c  \},
$$
where the index set $\mathcal{I}$ ranges over all the subsets of $\{1,2,\cdots,n_1\}$ which 
has cardinality $s$. Here, $\mathop{\bigsqcup}_{\mathcal{I}}$ stands for the disjoint union over 
$\mathcal{I}$. Then $\mathcal{L}_s(\mathbb{R}^n)$ consists of a disjoint union of affine spaces. 
\end{lemma}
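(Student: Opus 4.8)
The plan is to verify directly the two features asserted by the right-hand side: that each indexed piece is an affine space, and that the pieces assemble into $\mathcal{L}_s(\mathbb{R}^n)$ in the stated way. First I would fix a single index set $\mathcal{I} \subseteq \{1,2,\ldots,n\}$ with $|\mathcal{I}| = s$ and examine
$$
V_\mathcal{I} := \left\{x \in \mathbb{R}^n \mid x_j = 0 \ \text{ for all } j \in \mathcal{I}^c\right\}.
$$
This set is cut out by the $n-s$ homogeneous linear equations $x_j = 0$, $j \in \mathcal{I}^c$; equivalently, $V_\mathcal{I}$ is the kernel of the coordinate projection $\pi_{\mathcal{I}^c}\colon \mathbb{R}^n \to \mathbb{R}^{n-s}$ that reads off the entries indexed by $\mathcal{I}^c$. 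Hence $V_\mathcal{I}$ is a linear subspace of $\mathbb{R}^n$ of dimension exactly $s$, with $\{e_i : i \in \mathcal{I}\}$ a basis; being a linear subspace it is in particular an affine space, namely one through the origin. This is the whole linear-algebra content, and it is essentially immediate.

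Next I would account for the indexing and the decomposition. There are exactly $\binom{n}{s}$ subsets $\mathcal{I}$ of cardinality $s$, so the right-hand side is a union of $\binom{n}{s}$ such affine pieces. For a vector $x$ with $\norm{x}_0 = s$, the support $\operatorname{supp}(x)$ is a uniquely determined subset of cardinality $s$, and $x$ lies in $V_{\operatorname{supp}(x)}$; conversely each $V_\mathcal{I}$ contributes exactly its full-support vectors to $\mathcal{L}_s(\mathbb{R}^n)$. Tracking each vector by its support therefore gives the bijective correspondence between $\mathcal{L}_s(\mathbb{R}^n)$ and the labelled union $\bigsqcup_\mathcal{I} V_\mathcal{I}$, which is what the disjoint-union symbol records.

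The one point that needs care — and which I regard as the only genuine obstacle — is the meaning of \emph{disjoint}. The subspaces $V_\mathcal{I}$ are not set-theoretically disjoint: they all contain $0$ and, more generally, meet along the lower-dimensional coordinate subspaces indexed by $\mathcal{I}_1 \cap \mathcal{I}_2$. So the symbol $\bigsqcup$ must be read either as the formal (labelled) coproduct in which each sheet carries its own index $\mathcal{I}$, or as the honest partition of $\mathcal{L}_s(\mathbb{R}^n)$ into the exact-support strata $\{x : \operatorname{supp}(x) = \mathcal{I}\}$, each of which is the dense open subset of the affine space $V_\mathcal{I}$ on which all $s$ retained coordinates are nonzero. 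I would state explicitly which reading is intended; either one is harmless for the algorithm. What Step 1 of Algorithm~\ref{alg2} actually uses is only that $\mathcal{L}_s(\mathbb{R}^n)$ is covered by the finitely many affine subspaces $V_\mathcal{I}$, so that projecting a vector onto $\mathcal{L}_s(\mathbb{R}^n)$ reduces to projecting onto the nearest $V_\mathcal{I}$, i.e. to retaining the $s$ entries of largest magnitude. With the interpretation fixed, the remaining verifications are the routine observations above.
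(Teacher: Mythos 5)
Your proof is correct, and in fact it is more of a proof than the paper supplies: the paper disposes of this lemma with the single sentence ``It is easy to see that the statement is correct,'' so there is no argument of the authors' to compare yours against. Your two verifications --- that each $V_{\mathcal{I}}$ is the $s$-dimensional coordinate subspace cut out as the kernel of the projection onto the coordinates in $\mathcal{I}^c$, and that tracking supports matches each sparse vector to its piece --- are exactly the routine content the paper waves at. More valuably, your caveat about the word \emph{disjoint} flags a genuine imprecision in the statement itself: the subspaces $V_{\mathcal{I}}$ all contain $0$ and pairwise meet along the coordinate subspaces indexed by $\mathcal{I}_1\cap\mathcal{I}_2$, so the union is disjoint only as a labelled coproduct, or after replacing each $V_{\mathcal{I}}$ by its exact-support stratum $\{x : \operatorname{supp}(x)=\mathcal{I}\}$. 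The stratum reading is moreover the only one under which the displayed equality is literally true, since the paper defines $\mathcal{L}_s(\mathbb{R}^N)=\{x : \norm{x}_0 = s\}$ with equality while each full subspace $V_{\mathcal{I}}$ contains vectors with fewer than $s$ nonzero entries; your observation that each $V_{\mathcal{I}}$ ``contributes exactly its full-support vectors'' quietly repairs this mismatch. You are also right that nothing downstream is sensitive to the choice: the later arguments (Theorem~\ref{mainresult3} and Lemma~\ref{newkey}) use only that $\mathcal{L}_s(\mathbb{R}^N)$ is covered by finitely many linear subspaces, hence agrees with a single affine space near any point of full support.
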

\begin{proof}
It is easy to see that the statement is correct. \end{proof}
 
\begin{lemma}
The set of vectors in $\mathbb{R}^{N}$ for which $\mathcal{P}_{\mathcal{L}_s}(x)$ is single-valued, is given
by the open set
$$V_s = \left\{x \in \mathbb{R}^{n_2} \mid \abs{x_{i_1}} \geq  \abs{x_{i_2}} \geq \cdots \abs{x_{i_{n_2}}}, \abs{x_{i_{s+1}}} \neq \abs{x_{i_s}} \right\}
$$
consisting of vectors which has the property that if one arrange the components in decreasing order of magnitude, 
then $s^{th}$ and $(s+1)^{th}$ terms are distinct.
\end{lemma}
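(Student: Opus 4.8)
The plan is to reduce the projection onto $\mathcal{L}_s$ to a purely combinatorial selection problem and then read off single-valuedness from a strict-gap condition. Using the previous lemma, which writes $\mathcal{L}_s(\mathbb{R}^N)$ as a union over index sets $\mathcal{I}$ of cardinality $s$ of the coordinate subspaces $\{y : y_j = 0 \ \forall j \in \mathcal{I}^c\}$, I would first observe that for each fixed support $\mathcal{I}$ the nearest point of that subspace to $x$ is the restriction $x|_{\mathcal{I}}$ (keep the entries indexed by $\mathcal{I}$, zero the rest), with squared distance $\norm{x}^2 - \sum_{j \in \mathcal{I}} \abs{x_j}^2$. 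Hence computing $\mathcal{P}_{\mathcal{L}_s}(x)$ amounts to minimizing this distance over all $\mathcal{I}$, i.e. to maximizing the captured energy $\sum_{j \in \mathcal{I}} \abs{x_j}^2$, which is achieved exactly by the index sets consisting of $s$ coordinates of largest magnitude. This matches the description of the projection in the algorithm as keeping the $s$ largest components.

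Next I would translate single-valuedness of $\mathcal{P}_{\mathcal{L}_s}$ into uniqueness of the maximizing support. Writing $\tau = \abs{x_{i_s}}$ for the $s$-th largest magnitude, set $A = \{j : \abs{x_j} > \tau\}$ and $B = \{j : \abs{x_j} = \tau\}$. Every optimal support must contain all of $A$ and then fill the remaining $s - \abs{A}$ slots from $B$, so the number of optimal supports is $\binom{\abs{B}}{\,s - \abs{A}\,}$. Since $\abs{A} \le s-1$, this count equals $1$ precisely when $\abs{A} + \abs{B} = s$, i.e. when the number of coordinates with $\abs{x_j} \ge \tau$ is exactly $s$; equivalently $\abs{x_{i_{s+1}}} < \abs{x_{i_s}}$, which is exactly the defining condition of $V_s$.

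I would then finish the two directions. For the forward direction, a strict gap $\abs{x_{i_s}} > \abs{x_{i_{s+1}}}$ forces a unique optimal support, hence a unique projected vector, so $\mathcal{P}_{\mathcal{L}_s}(x)$ is single-valued. For the converse, when $\abs{x_{i_s}} = \abs{x_{i_{s+1}}} = \tau$, I would exhibit two distinct optimal supports that straddle the boundary (swap one boundary index in $B$ for another) and note that, because the retained entries have magnitude $\tau$, the two restrictions differ in those coordinates and therefore give two different projections, establishing multi-valuedness. Finally, openness of $V_s$ follows because the ordered magnitudes are continuous functions of $x$, so the strict inequality cuts out an open set.

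The main obstacle I anticipate is the converse direction, and specifically the degenerate case $\tau = 0$: here the boundary indices carry the value $0$, so the two candidate supports actually yield the \emph{same} zero-padded vector and the projection remains single-valued even though the gap condition fails. This means the statement needs the tacit assumption $\abs{x_{i_s}} > 0$ (equivalently, $x$ has at least $s$ nonzero entries) for the ``only if'' direction to hold as written; I would either incorporate this hypothesis or flag the degenerate case as the sole exception. Keeping the tie analysis correctly confined to indices straddling position $s$, since ties entirely within the top $s$, or entirely below it, do not affect uniqueness of the support, is the other point requiring care.
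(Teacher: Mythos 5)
Your proposal is correct and shares the paper's underlying outline --- identify $\mathcal{P}_{\mathcal{L}_s}$ with hard thresholding that keeps the $s$ entries of largest magnitude, tie single-valuedness to the strict gap $\abs{x_{i_s}} \neq \abs{x_{i_{s+1}}}$, and verify openness of $V_s$ --- but the proof effort is distributed in exactly the opposite way, and yours is the more rigorous version. The paper disposes of the equivalence in two sentences (essentially asserting that the projection is single-valued precisely when the $s$ smallest-in-magnitude positions to be zeroed are uniquely determined) and spends almost the entire proof on openness, via an explicit ball of radius $\epsilon = \abs{\abs{x_{i_{s+1}}}-\abs{x_{i_s}}}/4$ and entrywise perturbation estimates. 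You instead prove the equivalence in full: reducing the projection to maximizing the captured energy $\sum_{j\in\mathcal{I}}\abs{x_j}^2$ over supports of size $s$ (which also quietly repairs an imprecision in the paper, where $\mathcal{L}_s$ is defined as the \emph{exactly} $s$-sparse vectors although the nearest-point problem is only well posed on its closure, the at-most-$s$-sparse set described by the coordinate-subspace decomposition you invoke), counting the optimal supports as $\binom{\abs{B}}{s-\abs{A}}$, and giving both directions of the characterization; your one-line openness argument via continuity of the ordered magnitudes is a legitimate compression of the paper's ball computation. Most notably, the degenerate case you flag, $\abs{x_{i_s}} = \abs{x_{i_{s+1}}} = 0$, is a genuine defect of the lemma as stated that the paper's proof passes over: when $x$ has fewer than $s$ nonzero entries, all optimal supports yield the \emph{same} projected vector, so $\mathcal{P}_{\mathcal{L}_s}(x)$ is single-valued even though $x \notin V_s$, and hence $V_s$ describes the single-valuedness set only away from this exceptional (measure-zero, and harmless for the convergence theorem that uses the lemma) stratum. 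Your proposed fix --- adding the hypothesis $\abs{x_{i_s}}>0$, or noting this as the sole exception --- is the right repair, and your observation that ties entirely within the top $s$ positions or entirely below them do not break uniqueness is exactly the care the paper's terse assertion omits.
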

\begin{proof}
We first start by noting that the projection $\mathcal{P}_{\mathcal{L}_s}({\bf x})$ is obtained by setting
the smallest $N-s$ components in magnitude of the vector $x$ to zero.  Hence, the projection is single-valued if the $N-s$ 
smallest components of ${\bf x}$ are in unique positions(indices). Hence we must have that the $(N-s)^{th}$ and $(N-s + 1)^{th}$ 
components of $x$ must be distinct. Now we will show that the set $V_s$ is an open set.
Let ${\bf x} = (x_1, x_2, \cdots, x_{N}) \in \mathbb{R}^{N}$ with $\abs{x_{i_1}} \geq  \abs{x_{i_2}} \geq \cdots \abs{x_{i_{N}}}, 
\abs{x_{i_{s+1}}} \neq \abs{x_{i_s}}$. Let 
$$
	\epsilon := \frac{\abs{\abs{x_{i_{s+1}}} - \abs{x_{i_s}}}}{4}
$$
Consider an open ball $B_\epsilon({\bf x})$ centered at ${\bf x}$ of radius $\epsilon$. 
We have, for all ${\bf y} \in  B_\epsilon({\bf x})$ and $j \in \{1, 2, \cdots, N\}$, 
$$
\abs{\abs{y_j}-\abs{x_j}} \leq \abs{y_j-x_j} \leq \norm{{\bf y} -{\bf x}} < \epsilon
$$
Therefore, we have 
$$
\abs{y_{i_{j+1}}} \leq \abs{x_{i_{j+1}}} + \abs{\abs{y_{i_{j+1}}}-\abs{x_{i_{j+1}}}} < \abs{x_{i_{s+1}}} + \epsilon < 
\frac{\abs{\abs{x_{i_{s+1}}} + \abs{x_{i_{s}}}}}{2}
$$
for $j \geq s$. Similarly,
$$
\abs{y_{i_{j}}} \geq \abs{x_{i_{j}}} - \abs{\abs{x_{i_{j}}}-\abs{y_{i_{j}}}} > \abs{x_{i_{s}}} - \epsilon > \frac{\abs{\abs{x_{i_{s+1}}} + 
\abs{x_{i_{s}}}}}{2}
$$
for $j\leq s$. Hence, we deduce that, for all ${\bf y} \in B_\epsilon({\bf x})$ and 
$j \in \{1,2,\cdots, s\}$, $\abs{y_{i_{j}}} > \abs{y_{i_s}}$, which implies that ${\bf y} \in V_s$ and, therefore, 
$B_\epsilon({\bf x}) \subset V_s$. 
\end{proof} 

Next let us recall the following well-known results. 
\begin{theorem}[Von Neumann, 1950\cite{Neumann}]
\label{VonN}
If $L_1$ and $L_2$ are two closed subspaces of a Hilbert space $X$, then the sequence of operators
$$
\mathcal{P}_{L_1}, \mathcal{P}_{L_2}\mathcal{P}_{L_1}, \mathcal{P}_{L_1}\mathcal{P}_{L_2}\mathcal{P}_{L_1}, 
\mathcal{P}_{L_2}\mathcal{P}_{L_1}\mathcal{P}_{L_2}\mathcal{P}_{L_1}, \cdots
$$
converge to $\mathcal{P}_{L_1\cap L_2}$. In other words,
$$
\lim\limits_{k \rightarrow \infty}(\mathcal{P}_{L_2}\mathcal{P}_{L_1})^k(x) = \mathcal{P}_{L_1\cap L_2}(x)
$$
for all $x \in X$.	
\end{theorem}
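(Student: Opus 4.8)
The plan is to set $T := \mathcal{P}_{L_2}\mathcal{P}_{L_1}$ and $M := L_1 \cap L_2$, and to prove the equivalent statement that $T^k \to \mathcal{P}_M$ strongly; the companion sequence $\mathcal{P}_{L_1}, \mathcal{P}_{L_2}\mathcal{P}_{L_1}, \dots$ is handled identically. The first step is to identify the fixed points of $T$. Since each orthogonal projection is a contraction, $\norm{T} \le 1$, and every $x \in M$ clearly satisfies $Tx = x$. Conversely, if $Tx = x$ then $\norm{x} = \norm{\mathcal{P}_{L_2}\mathcal{P}_{L_1}x} \le \norm{\mathcal{P}_{L_1}x} \le \norm{x}$, so equality holds throughout; Lemma~\ref{lemma1} applied to the inequality $\norm{\mathcal{P}_{L_1}x}\le\norm{x}$ gives $x \in L_1$, whence $\mathcal{P}_{L_1}x = x$, and then $\norm{\mathcal{P}_{L_2}x} = \norm{x}$ forces $x \in L_2$. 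Thus $\ker(I-T) = M$.

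Next I would produce the orthogonal splitting on which the argument runs. For a contraction on a Hilbert space one has $Tx = x \iff T^\star x = x$, because $\norm{T^\star x - x}^2 = \norm{T^\star x}^2 - 2\langle T^\star x, x\rangle + \norm{x}^2 \le 0$ once $\langle T^\star x, x\rangle = \langle x, Tx\rangle = \norm{x}^2$ and $\norm{T^\star x}\le\norm{x}$ are used. Hence $\ker(I - T^\star) = M$ as well, and using $\overline{\operatorname{ran}(I-T)} = \ker(I-T^\star)^\perp$ this yields $X = M \oplus M^\perp$. Moreover $T$ fixes $M$ pointwise and, since $T^\star = \mathcal{P}_{L_1}\mathcal{P}_{L_2}$ also fixes $M$, a duality computation shows $T(M^\perp) \subseteq M^\perp$. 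Because $T^k$ acts as the identity on $M$ while $\mathcal{P}_M$ is the identity on $M$ and zero on $M^\perp$, the theorem reduces to proving $T^k x \to 0$ for every $x \in M^\perp$.

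For this reduction I would exploit an energy-dissipation identity. Writing $w = T^k x$ and peeling off the two projections one at a time gives, by the Pythagorean theorem,
$$ \norm{T^k x}^2 - \norm{T^{k+1}x}^2 = \norm{(I - \mathcal{P}_{L_1})w}^2 + \norm{(I - \mathcal{P}_{L_2})\mathcal{P}_{L_1}w}^2 \ge 0. $$
Hence $\norm{T^k x}^2$ is non-increasing, its telescoped differences are summable, and both nonnegative terms on the right tend to $0$. Consequently $\norm{T^k x - T^{k+1}x} \to 0$, and for any weakly convergent subsequence $T^{k_j}x \rightharpoonup z$ of the bounded sequence $(T^k x)$, weak continuity of $\mathcal{P}_{L_1}$ and $\mathcal{P}_{L_2}$ together with the two vanishing terms forces $\mathcal{P}_{L_1}z = z$ and $\mathcal{P}_{L_2}z = z$, so $z \in L_1 \cap L_2 = M$. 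Since the iterates stay in the weakly closed set $M^\perp$, we get $z \in M \cap M^\perp = \{0\}$, whence $T^k x \rightharpoonup 0$.

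The hard part is upgrading this weak convergence to strong convergence: because $T$ is not self-adjoint, a general weakly null, norm-monotone sequence need not be norm null, so the special geometry of a product of two projections must enter here. In the finite-dimensional setting relevant to this paper weak and strong convergence coincide, so $T^k x \to 0$ is immediate (and when $M = \{0\}$ Lemma~\ref{lemma2} even gives $\norm{T} < 1$, hence a geometric rate $\norm{T^k}\le\norm{T}^k$); for a general Hilbert space I would instead bring in the positive self-adjoint contraction $T^\star T = \mathcal{P}_{L_1}\mathcal{P}_{L_2}\mathcal{P}_{L_1}$ and its spectral decomposition to force $\norm{T^k x} \to 0$. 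That final quantitative step, where the distinction between an arbitrary contraction and a genuine product of projections becomes essential, is the one I expect to require the most care.
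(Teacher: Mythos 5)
The paper does not actually prove this theorem---its ``proof'' is a one-line pointer to von Neumann's monograph (Chapter 13, Theorem 13.7)---so your argument must stand on its own, and it almost does. Everything through weak convergence is correct: the identification $\ker(I-T)=M$ via the equality chain and Lemma~\ref{lemma1}, the contraction identity $\ker(I-T)=\ker(I-T^\star)$, the resulting splitting $X=M\oplus M^\perp$ with $T$-invariance of both summands, the dissipation identity, and the weak-subsequential-limit argument are all sound; and in the finite-dimensional setting where the paper invokes the theorem (Theorem~\ref{mainresult3}) this already finishes the proof. But the theorem is stated for an arbitrary Hilbert space, and there your write-up stops one step short of the claim: the gap you flag is genuine, because $T^k x \rightharpoonup 0$ on $M^\perp$ together with non-increasing norms does \emph{not} in general force $\norm{T^k x}\to 0$ for a contraction (a unilateral shift has $T^k x \rightharpoonup 0$ with $\norm{T^k x}=\norm{x}$), so the special structure of a product of two projections must indeed be used, and without that step the infinite-dimensional statement is unproved.

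The route you name does close it, and more easily than you anticipate. Since $\mathcal{P}_{L_2}^2=\mathcal{P}_{L_2}$, one checks $(T^\star)^k T^k = S^{2k-1}$ where $S:=T^\star T=\mathcal{P}_{L_1}\mathcal{P}_{L_2}\mathcal{P}_{L_1}$, hence $\norm{T^k x}^2=\langle S^{2k-1}x,x\rangle$. Now $S$ is self-adjoint with $0\le S\le I$, and its fixed-point set is again $M$ (by the same equality-chain argument you used for $T$), so by the spectral theorem and dominated convergence applied to $\lambda^{2k-1}$ on $[0,1]$ one gets $S^{2k-1}\to E_S(\{1\})=\mathcal{P}_M$ strongly; therefore $\norm{T^k x}\to\norm{\mathcal{P}_M x}$, and weak convergence $T^k x\rightharpoonup \mathcal{P}_M x$ together with convergence of norms upgrades to strong convergence, with the odd-length products handled by your observation that $\norm{(I-\mathcal{P}_{L_1})T^k x}\to 0$. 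One further caution: your parenthetical that $M=\{0\}$ yields $\norm{T}<1$ via Lemma~\ref{lemma2} is valid only in finite dimensions, as you correctly scoped it---in infinite dimensions two closed subspaces can intersect trivially while $\norm{\mathcal{P}_{L_2}\mathcal{P}_{L_1}}=1$, and then no geometric rate exists, which is exactly why the soft spectral argument, not a norm bound, is the right tool for the general statement.
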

\begin{proof}
Refer to \cite{Neumann} Chapter 13, Theorem 13.7 for a proof. 
\end{proof}

\begin{theorem}
\label{mainresult3}
If ${\bf x}_\star$ is an isolated point of $\mathcal{L}_s(\mathbb{R}^N)\cap\mathcal{A}$. Then, Algorithm~\ref{alg2} 
will locally converge to ${\bf x}_\star$ linearly. 
\end{theorem}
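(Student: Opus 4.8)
The plan is to reduce the nonlinear alternating projection near ${\bf x}_\star$ to an ordinary alternating projection between two linear subspaces, and then to invoke Lemma~\ref{lemma2} to produce the contraction factor, closing the argument exactly as in the proof of Theorem~\ref{convergencethm}. First I would record the local structure of $\mathcal{L}_s(\mathbb{R}^N)$ near ${\bf x}_\star$. Since ${\bf x}_\star \in \mathcal{L}_s(\mathbb{R}^N)$ has exactly $s$ nonzero entries, let $S$ be its support and let $L := \{x \in \mathbb{R}^N \mid x_j = 0 \ \forall j \notin S\}$ be the corresponding coordinate subspace. The $s$-th largest magnitude of ${\bf x}_\star$ is $\min_{i \in S}\abs{({\bf x}_\star)_i} > 0$, while its $(s+1)$-th largest magnitude is $0$, so ${\bf x}_\star$ lies in the open single-valued region $V_s$. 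Moreover, any other coordinate piece $L_{S'}$ with $S' \neq S$ and $\abs{S'}=s$ satisfies $\operatorname{dist}({\bf x}_\star, L_{S'}) \ge \min_{i\in S}\abs{({\bf x}_\star)_i}>0$, so only the single piece $L$ comes near ${\bf x}_\star$. Consequently there is a ball $B_\rho({\bf x}_\star)$ on which $\mathcal{P}_{\mathcal{L}_s}$ agrees with the linear projection $\mathcal{P}_L$, and on which $\mathcal{L}_s\cap\mathcal{A}$ coincides with $L\cap\mathcal{A}$.

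Next I would translate the iteration to be centered at ${\bf x}_\star$. Writing $\mathcal{A} = W + {\bf x}_\star$ with $W = \operatorname{Null}(A)$ (legitimate since ${\bf x}_\star\in\mathcal{A}$), and using ${\bf x}_\star \in L$, one gets for $x_k \in B_\rho({\bf x}_\star)$ that $\mathcal{P}_L(x_k) = {\bf x}_\star + \mathcal{P}_L(x_k - {\bf x}_\star)$ and $\mathcal{P}_{\mathcal{A}}(y) = {\bf x}_\star + \mathcal{P}_W(y-{\bf x}_\star)$. Setting $u_k := x_k - {\bf x}_\star$, the two algorithmic steps collapse to the single linear recursion
$$
u_{k+1} = \mathcal{P}_W\mathcal{P}_L(u_k).
$$
Thus, locally, Algorithm~\ref{alg2} is precisely the von Neumann alternating projection between the subspaces $L$ and $W$.

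The isolated-point hypothesis is then converted into a transversality statement. Since $L\cap\mathcal{A} = {\bf x}_\star + (L\cap W)$ is an affine subspace, and since $\mathcal{L}_s\cap\mathcal{A}$ coincides with $L\cap\mathcal{A}$ near ${\bf x}_\star$, the point ${\bf x}_\star$ being isolated forces $L\cap W = \{0\}$. Now I would apply Lemma~\ref{lemma2} with $L_1 = L$ and $L_2 = W$ to obtain $c := \norm{\mathcal{P}_W\mathcal{P}_L} < 1$, which immediately yields the contraction $\norm{u_{k+1}} = \norm{\mathcal{P}_W\mathcal{P}_L(u_k)} \le c\,\norm{u_k}$. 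Note that Von Neumann's Theorem~\ref{VonN} already guarantees $u_k \to \mathcal{P}_{L\cap W}(u_0) = 0$; Lemma~\ref{lemma2} is what upgrades this qualitative convergence to a linear rate.

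Finally I would close the argument exactly as in the proof of Theorem~\ref{convergencethm}: since $\norm{u_{k+1}} \le c\norm{u_k} < \norm{u_k} < \rho$, an initial guess $x_0\in B_\rho({\bf x}_\star)$ keeps every iterate inside $B_\rho({\bf x}_\star)$, so the identification $\mathcal{P}_{\mathcal{L}_s} = \mathcal{P}_L$ persists for all $k$, and induction gives $\norm{x_k - {\bf x}_\star} \le c^k \norm{x_0 - {\bf x}_\star}$, the claimed local linear convergence. I expect the main obstacle to be the first paragraph, namely rigorously justifying that near an isolated (hence exactly $s$-sparse) solution the set-valued map $\mathcal{P}_{\mathcal{L}_s}$ reduces to a fixed linear coordinate projection and that no iterate can migrate to a different support; once this \emph{support stability} is established, the remainder is the linear-subspace contraction argument already developed for matrix completion.
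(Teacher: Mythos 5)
Your proposal is correct, and its localization step is the same as the paper's: both proofs exploit the open single-valued region $V_s$ around ${\bf x}_\star$ and the fact that, on a small enough ball, $\mathcal{L}_s(\mathbb{R}^N)$ reduces to the single coordinate subspace $L$ determined by $\operatorname{Supp}({\bf x}_\star)$ (your quantitative justification via the gap $\min_{i\in S}\abs{({\bf x}_\star)_i}>0$ is exactly what the paper's shrink-the-ball remark is implicitly relying on). Where you genuinely diverge is in the concluding mechanism. The paper finishes by citing von Neumann's alternating projection theorem (Theorem~\ref{VonN}), which, as you correctly observe, only guarantees convergence of $(\mathcal{P}_{L_2}\mathcal{P}_{L_1})^k$ to $\mathcal{P}_{L_1\cap L_2}$ and does not by itself produce the linear rate asserted in the statement; the paper leaves both the rate and the use of the isolated-point hypothesis implicit. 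You instead center the iteration at ${\bf x}_\star$ to get the exact linear recursion $u_{k+1}=\mathcal{P}_W\mathcal{P}_L(u_k)$ with $W=\operatorname{Null}(A)$, convert isolatedness into the transversality condition $L\cap W=\{0\}$, and then invoke Lemma~\ref{lemma2} (from the matrix-completion section) to obtain the explicit contraction factor $c=\norm{\mathcal{P}_W\mathcal{P}_L}<1$, closing with the same invariant-ball induction used in Theorem~\ref{convergencethm}. Your route buys three things the paper's proof glosses over: an explicit derivation of $L\cap W=\{0\}$ from the hypothesis, the actual linear rate with an identifiable constant (valid here because the ambient space is finite dimensional, so Lemma~\ref{lemma2} applies; in infinite dimensions von Neumann convergence can be arbitrarily slow), and the support-stability argument guaranteeing that every iterate stays in the ball where $\mathcal{P}_{\mathcal{L}_s}=\mathcal{P}_L$, so the linearization is legitimate at every step rather than only at the first.
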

\begin{proof}
Let $\mathcal{I} = \operatorname{Supp}({\bf x}_\star)$ be the support of ${\bf x}_\star$ and $s = \norm{{\bf x}_\star}_0$. 
Consider an open set $V_s$ of vectors which has the property that their $n-s$ smallest components are are in unique positions(indices). 
In fact, $V_s$ can be concretely described as 
$$
V_s = \left\{ {\bf x} \in \mathbb{R}^{N} \mid \quad \abs{x_{i_1}}\geq \abs{x_{i_2}} \geq  \cdots \abs{x_{i_{n_2}}},\abs{x_{i_{s}}} 
\neq \abs{x_{i_{s+1}}} \right\}.
$$
	
Clearly ${\bf x}_\star \in V_s$. Let $B(r)$ be an open ball centered at ${\bf x}_\star$ and of radius $r$ completely contained inside 
$V_s$. Since $B(r) \subseteq V_s$, for any ${\bf x} \in B(r)$, the projection  $\mathcal{P}_{\mathcal{L}_s}({\bf x})$ is uniquely defined. 
Since affine spaces in a finite dimensional Euclidean space are closed, one can shrink the ball $B(r)$, if necessary, such that the restriction 
$\mathcal{L}_s(\mathbb{R}^{n_2})\vert_{B(r)}$ of the set of $s-$sparse vectors to the open set $B(r)$ is an affine space. 
Then under the assumption the hypothesis in this theorem, the result follows from Theorem \ref{VonN}. 
\end{proof}

\begin{lemma}
\label{newkey}
Assume $A$ has the following property: 
\begin{equation}
\label{critical}
\mathcal{L}_s(\mathbb{R}^{N}) \cap \operatorname{Null}(A) = \{0\},
\end{equation}
where $\operatorname{Null}(A) $ is the null space of $A$. 
Furthermore, assume that ${\bf x}_\star \in \mathcal{L}_s(\mathbb{R}^{N})\cap \mathcal{A}$. Then  ${\bf x}_\star$ is an isolated point of 
$\mathcal{L}_s(\mathbb{R}^{N})\cap\mathcal{A}$. 
\end{lemma}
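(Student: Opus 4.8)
The plan is to prove directly that a suitable open ball around ${\bf x}_\star$ meets $\mathcal{L}_s(\mathbb{R}^N)\cap\mathcal{A}$ in the single point ${\bf x}_\star$. Write $\mathcal{I} = \operatorname{Supp}({\bf x}_\star)$, so $\abs{\mathcal{I}} \le s$, and set $\rho = \min_{i\in\mathcal{I}}\abs{({\bf x}_\star)_i} > 0$. I would take the open ball $B = \{{\bf y} : \norm{{\bf y}-{\bf x}_\star} < \rho/2\}$ and show that any ${\bf y} \in B \cap \mathcal{L}_s(\mathbb{R}^N) \cap \mathcal{A}$ must coincide with ${\bf x}_\star$.

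First I would force the support of ${\bf x}_\star$ to persist in ${\bf y}$. For each $i \in \mathcal{I}$, the reverse triangle inequality gives $\abs{y_i} \ge \abs{({\bf x}_\star)_i} - \norm{{\bf y}-{\bf x}_\star} > \rho - \rho/2 = \rho/2 > 0$, so $y_i \ne 0$ and hence $\mathcal{I}\subseteq\operatorname{Supp}({\bf y})$. This is the crucial step: a priori ${\bf y}$ and ${\bf x}_\star$ could have different $s$-element supports, so their difference would only be $2s$-sparse, which the hypothesis cannot control. But since $\operatorname{Supp}({\bf x}_\star)\subseteq\operatorname{Supp}({\bf y})$, the union of the two supports is just $\operatorname{Supp}({\bf y})$, so ${\bf y}-{\bf x}_\star$ vanishes off $\operatorname{Supp}({\bf y})$ and therefore has at most $\abs{\operatorname{Supp}({\bf y})}\le s$ nonzero components. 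Thus ${\bf y}-{\bf x}_\star\in\mathcal{L}_s(\mathbb{R}^N)$.

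Next I would invoke that $\mathcal{A}=\operatorname{Null}(A)+{\bf x}_0$ is affine with direction space $\operatorname{Null}(A)$: from $A{\bf y}={\bf b}$ and $A{\bf x}_\star={\bf b}$ we get $A({\bf y}-{\bf x}_\star)=0$, i.e. ${\bf y}-{\bf x}_\star\in\operatorname{Null}(A)$. Combining the two conclusions, ${\bf y}-{\bf x}_\star\in\mathcal{L}_s(\mathbb{R}^N)\cap\operatorname{Null}(A)=\{0\}$ by the hypothesis (\ref{critical}). Hence ${\bf y}={\bf x}_\star$, which yields $B\cap\mathcal{L}_s(\mathbb{R}^N)\cap\mathcal{A}=\{{\bf x}_\star\}$ and shows ${\bf x}_\star$ is isolated.

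The only genuine obstacle is the sparsity bookkeeping resolved in the second paragraph, since the difference of two $s$-sparse vectors is generically $2s$-sparse and the weaker $s$-sparse null-space hypothesis would then be useless. Choosing the radius strictly below $\tfrac12\min_{i\in\mathcal{I}}\abs{({\bf x}_\star)_i}$ is exactly what keeps ${\bf y}-{\bf x}_\star$ within $s$ nonzero entries so that (\ref{critical}) applies. I would also remark that this single argument covers uniformly both the nondegenerate case $\norm{{\bf x}_\star}_0 = s$ and the degenerate case $\norm{{\bf x}_\star}_0 < s$, where ${\bf x}_\star$ lies on the overlap of several coordinate subspaces of $\mathcal{L}_s(\mathbb{R}^N)$; no separate case analysis is needed.
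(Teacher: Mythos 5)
Your proof is correct, and it takes a genuinely different, more elementary route than the paper's. The paper argues by contradiction through local geometry: if ${\bf x}_\star$ were not isolated then, since $\mathcal{A}$ and $\mathcal{L}_s(\mathbb{R}^N)$ are locally affine, there would exist a linear space $L$ of dimension at least one with $L+{\bf x}_\star \subseteq \mathcal{L}_s(\mathbb{R}^N)\cap\mathcal{A}$, forcing $L \subseteq T_{L_0}({\bf x}_\star)\cap\operatorname{Null}(A) = L_0\cap\operatorname{Null}(A) \subseteq \mathcal{L}_s(\mathbb{R}^N)\cap\operatorname{Null}(A)=\{0\}$, where $L_0$ is a coordinate subspace of $\mathcal{L}_s(\mathbb{R}^N)$ containing ${\bf x}_\star$ --- a contradiction. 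You instead produce an explicit isolation radius: within the ball of radius $\rho/2$, $\rho=\min_{i\in\operatorname{Supp}({\bf x}_\star)}\abs{({\bf x}_\star)_i}$, support stability gives $\operatorname{Supp}({\bf x}_\star)\subseteq\operatorname{Supp}({\bf y})$, so ${\bf y}-{\bf x}_\star$ is $s$-sparse rather than merely $2s$-sparse, lies in $\operatorname{Null}(A)$, and hence vanishes by (\ref{critical}). Your observation that the $s$-versus-$2s$ bookkeeping is the crux is exactly right; it is the same mechanism the paper encodes abstractly in the step $T_{L_0}({\bf x}_\star)=L_0$. What each buys: your argument is quantitative (an explicit radius, potentially useful for estimating the basin of local convergence of Algorithm 2) and fully self-contained, sidestepping the paper's unproven assertion that non-isolatedness yields a whole linear space inside the intersection (true here because $\mathcal{L}_s(\mathbb{R}^N)\cap\mathcal{A}$ is a finite union of affine subspaces, but left implicit in the paper); the paper's tangent-space formulation is more conceptual and deliberately parallels the transversality condition $T_{\mathcal{A}_\Omega}(M)\cap T_{\mathcal{M}_r}(M)=\{0\}$ driving the matrix-completion analysis of Section 2. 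One shared caveat: like the paper's own proof, your argument implicitly reads $\mathcal{L}_s(\mathbb{R}^N)$ as the set of vectors with \emph{at most} $s$ nonzero entries (the union of coordinate subspaces), since ${\bf y}-{\bf x}_\star$ may have fewer than $s$ nonzeros; this is the only reading under which hypothesis (\ref{critical}) itself is meaningful, so no correction is needed, but it deserves a remark.
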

\begin{proof}
Assume, on the contrary, that ${\bf x}_\star$ is not an isolated point of the set $\mathcal{L}_s(\mathbb{R}^{n_2})\cap\mathcal{A}$. Then, since 
$A$ and $\mathcal{L}_s(\mathbb{R}^{N})$ are locally affine spaces, there exist a linear space $L$ of dimension greater than or equal to 1 such 
that $L+{\bf x}_{\star} \subseteq \mathcal{L}_s(\mathbb{R}^{N})\cap\mathcal{A}$. Since each
of the intersecting spaces are affine spaces locally, $L$ must lie also in the intersection	of their tangent spaces.   
Hence,
$$
L \subseteq T_{\mathcal{L}_s(\mathbb{R}^{N})}({\bf x}_\star)\cap \operatorname{Null}(A)
$$
where $T_{\mathcal{L}_s(\mathbb{R}^{N})}({\bf x}_\star)$ is the tangent space to $\mathcal{L}_s(\mathbb{R}^{N})$ at the point 
${\bf x}_\star$.
Now, since $\mathcal{L}_s(\mathbb{R}^{N})$ is an union of linear spaces, let us assume ${\bf x}_\star \in L_0 \subseteq 
\mathcal{L}_s(\mathbb{R}^{N})$ lies in a  linear space $L_0$ contained in $\mathcal{L}_s(\mathbb{R}^{N})$.
Therefore, we have
$$
	L \subseteq T_{L_0}({\bf x}_\star)\cap \operatorname{Null}(A) = L_0 \cap \operatorname{Null}(A) \subseteq  	
\mathcal{L}_s(\mathbb{R}^{N}) \cap \operatorname{Null}(A) =\{0\} 
$$
which leads to the  contradiction as $L$ is of dimension greater than or equal to $1$. 
Note that, in order to derive the equality in the last equation, 
we have used the fact that the tangent space of a linear space is the linear space itself.
\end{proof}

The discussion above leads to our final result in this section.
\begin{theorem}
Under the assumption (\ref{critical}) in Lemma~\ref{newkey}, Algorithm~\ref{alg2} will converge linearly for any starting initial guess
${\bf x}_0$. 
\end{theorem}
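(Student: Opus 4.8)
The plan is to bootstrap the local statement of Theorem~\ref{mainresult3} into a global one by exploiting the piecewise-linear structure of the projection $\mathcal{P}_{\mathcal{L}_s}$. First I would record two structural facts. Since $\mathbf{x}_{k+1}=\mathcal{P}_{\mathcal{A}}(\mathbf{y}_k)\in\mathcal{A}$ and the initial guess lies in $\mathcal{A}$, every iterate satisfies $\mathbf{x}_k\in\mathcal{A}$, so $\mathcal{A}=\operatorname{Null}(A)+\mathbf{x}_0$ is the relevant affine space throughout. Writing $\mathcal{L}_s(\mathbb{R}^N)=\bigcup_{\mathcal{I}}L_{\mathcal{I}}$ as the finite union, over the $\binom{N}{s}$ index sets $\mathcal{I}$ of size $s$, of the coordinate subspaces $L_{\mathcal{I}}=\{\mathbf{x}: x_j=0,\ j\notin\mathcal{I}\}$, I note that on the open region $R_{\mathcal{I}}\subset V_s$ where the $s$ largest components in magnitude are exactly those indexed by $\mathcal{I}$, the operator $\mathcal{P}_{\mathcal{L}_s}$ coincides with the linear projection $\mathcal{P}_{L_{\mathcal{I}}}$; hence the iteration map $\mathbf{x}\mapsto\mathcal{P}_{\mathcal{A}}(\mathcal{P}_{\mathcal{L}_s}(\mathbf{x}))$ is affine on each $R_{\mathcal{I}}$.

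Next I would establish a monotone-gap estimate. Because each step is a metric projection onto a closed set and $\mathbf{x}_k\in\mathcal{A}$, the gaps obey $\norm{\mathbf{x}_{k+1}-\mathbf{y}_k}\le\norm{\mathbf{x}_k-\mathbf{y}_k}$ and $\norm{\mathbf{x}_{k+1}-\mathbf{y}_{k+1}}\le\norm{\mathbf{x}_{k+1}-\mathbf{y}_k}$, so $\norm{\mathbf{x}_k-\mathbf{y}_k}$ is non-increasing and converges, which also keeps the sequence in a bounded region. Assumption (\ref{critical}) enters here: for every index set $\mathcal{I}$ one has $L_{\mathcal{I}}\cap\operatorname{Null}(A)=\{0\}$, so Lemma~\ref{lemma2} gives $\norm{\mathcal{P}_{\operatorname{Null}(A)}\mathcal{P}_{L_{\mathcal{I}}}}<1$, and since there are only finitely many $\mathcal{I}$ the factor $\gamma:=\max_{\mathcal{I}}\norm{\mathcal{P}_{\operatorname{Null}(A)}\mathcal{P}_{L_{\mathcal{I}}}}$ satisfies $\gamma<1$ uniformly.

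I would then argue that the active support $\mathcal{I}_k$ selected in Step~1 stabilizes after finitely many iterations. Once it does, say $\mathcal{I}_k\equiv\mathcal{I}$ for $k\ge k_0$, the scheme reduces to the classical two-space alternating projection between the fixed subspace $L_{\mathcal{I}}$ and the affine space $\mathcal{A}$. By Von Neumann's Theorem~\ref{VonN} the iterates then converge to the intersection point, which is isolated by Lemma~\ref{newkey}, and the factor $\gamma<1$ from the previous paragraph upgrades this to the linear estimate $\norm{\mathbf{x}_{k+1}-\mathbf{x}_\star}\le\gamma\norm{\mathbf{x}_k-\mathbf{x}_\star}$. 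Alternatively, once the support freezes one may simply invoke Theorem~\ref{mainresult3} directly, since from that index on the iterate already lies in the basin around the isolated intersection point.

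The hard part will be the stabilization step, namely ruling out that the active support $\mathcal{I}_k$ oscillates among several index sets forever. This is the genuine difficulty of nonconvex alternating projection: a convergent gap does not by itself prevent perpetual switching between the finitely many subspaces, and in general alternating projection between a nonconvex set and a convex set can stall at a spurious positive-gap configuration. I would attack it by combining the uniform contraction factor $\gamma<1$ with the convergence of $\norm{\mathbf{x}_k-\mathbf{y}_k}$: if the gap tended to a value $L>0$, a compactness argument over the finitely many pieces should produce a limiting two-space configuration whose alternating projection is a strict $\gamma$-contraction, forcing $L=0$ and thereby trapping the tail of the sequence inside a single region $R_{\mathcal{I}}$, where the affine contraction takes over. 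Making this confinement argument fully rigorous — in particular handling iterates that pass near the region boundaries of $V_s$ where ties in magnitude occur — is where the real work lies.
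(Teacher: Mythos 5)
Your proposal stalls exactly where you say it does, and the missing step cannot be repaired along the lines you sketch. The support-stabilization claim is left unproven, and your fallback argument --- that a positive limiting gap $L>0$ would produce a limiting two-subspace configuration that is a strict $\gamma$-contraction, ``forcing $L=0$'' --- is false. Assumption (\ref{critical}) gives $L_{\mathcal{I}}\cap\operatorname{Null}(A)=\{0\}$, hence by Lemma~\ref{lemma2} $\norm{\mathcal{P}_{\operatorname{Null}(A)}\mathcal{P}_{L_{\mathcal{I}}}}<1$ for each of the finitely many coordinate subspaces, but this only forces the gap to vanish when $L_{\mathcal{I}}$ and the affine space $\mathcal{A}$ actually meet. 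If ${\bf b}\notin A(L_{\mathcal{I}})$ for the currently active support, then $L_{\mathcal{I}}\cap\mathcal{A}=\emptyset$, and alternating projection between the subspace $L_{\mathcal{I}}$ and the affine space $\mathcal{A}$ converges to the (unique, thanks to your $\gamma<1$) pair of nearest points of the two sets. If that nearest pair $({\bf x}^{\ast},{\bf y}^{\ast})$ moreover satisfies ${\bf y}^{\ast}=\mathcal{P}_{\mathcal{L}_s}({\bf x}^{\ast})$ --- i.e.\ $\mathcal{I}$ is the support actually selected by the hard-thresholding step at ${\bf x}^{\ast}$ --- then $({\bf x}^{\ast},{\bf y}^{\ast})$ is a genuine fixed configuration of Algorithm~\ref{alg2} with positive gap: started there, the iteration never moves, the support trivially ``stabilizes,'' and yet nothing converges to a point of $\mathcal{L}_s(\mathbb{R}^N)\cap\mathcal{A}$. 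So the uniform contraction factor $\gamma<1$ is compatible with stall points, your compactness argument cannot conclude $L=0$, and global convergence ``for any starting initial guess'' does not follow from the ingredients you assembled; what your plan actually delivers is local linear convergence near each (isolated) intersection point.

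For comparison, the paper's own proof of this theorem is a single sentence: combine Lemma~\ref{newkey} (under (\ref{critical}), every point of $\mathcal{L}_s(\mathbb{R}^N)\cap\mathcal{A}$ is isolated) with Theorem~\ref{mainresult3} (local linear convergence near an isolated intersection point). That is precisely the ``alternatively, invoke Theorem~\ref{mainresult3} once the support freezes'' branch of your argument, with no attempt at globalization: the paper never addresses the oscillation/stall issue at all, so the ``any starting initial guess'' clause is not actually supported by its proof either. Your diagnosis of where the real difficulty lies is therefore correct, and more candid than the source; but as it stands your proposal, like the paper's one-liner, proves only the local statement, and the specific bridge you suggest (finitely many pieces plus a uniform $\gamma<1$ forcing the gap to zero) fails for the reason above.
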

\begin{proof}
We simply combine Lemma~\ref{newkey} and Theorem~\ref{mainresult3} together to have this result. 
\end{proof}

\subsection{Numerical Results from Algorithm~\ref{alg2} for Sparse Vector Recovery}
We have used Algorithm~\ref{alg2} to compute sparse solutions and compare the performance of several existing 
algorithms. Mainly, we compare with the iteratively reweighted $\ell_1$ minimization (CWB for short) in 
\cite{CWB08}, the $L^1$ greedy algorithm (KP) proposed in \cite{KP10}, the FISTA in \cite{BT09}, 
the hard iterative pursuit (HTP) in \cite{F11}, and generalized approximate message passing algorithm (GAMP) 
in \cite{DMM09}, \cite{R11}. LV stands for our Algorithm~\ref{alg2}. 
We present the frequency of recovery of Gaussian random matrices of size $128\times 256$ with sparsity from 
$10--70$ over 500 repeated runs with a tolerance $1e-3$ in maximum norm. 
In Figure~\ref{CSgaussian}(left figure), we show the performance of various algorithms. 
Next we repeat the same experiments based on  uniform random matrices of size $128\times 256$. The
performance of frequency of recovery from various algorithm is shown in Figure~\ref{CSgaussian} (right).   
In this case, it is known that the GAMP is not good. 
\begin{figure}[h]
\centering
\begin{tabular}{cc}
\includegraphics[scale=0.2]{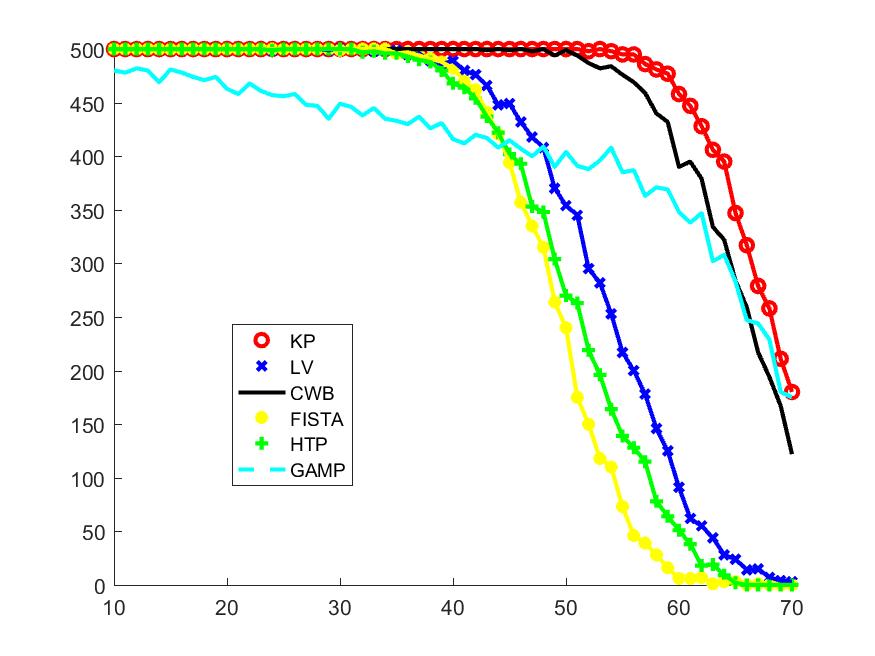} \includegraphics[scale=0.2]{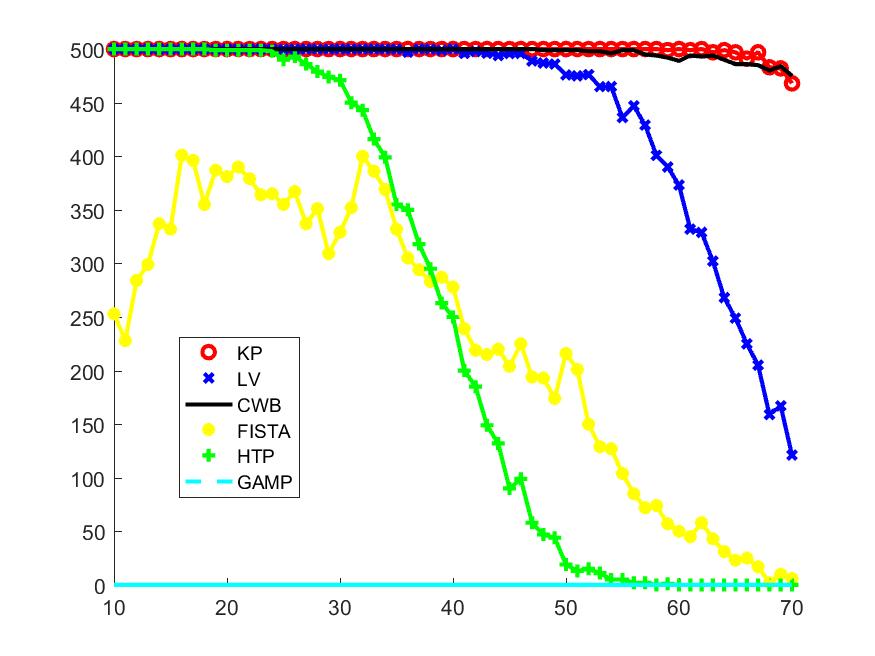}
\end{tabular}
\caption{Frequency of Sparse Recovery by Various Algorithms from Gaussian random matrices (left) 
and  from uniform random matrices(right) 
\label{CSgaussian}}		
\end{figure}

\section{Remarks on Existence of Matrix Completion}
Recall ${\cal M}_r$ is the set of all matrices of size $n\times n$ with rank $r$ and 
$\overline{\mathcal{M}_{r}}$ is the set of all matrices with rank 
$\le r$. It is clear that $\overline{\mathcal{M}_{r}}$ is the closure of ${\cal M}_r$ in the Zariski sense
(cf. \cite{ZARISKI}). 
It is easy to see that dimension ${\cal M}_r$ is $2 n r-r^2$ (cf. Proposition 12.2 in \cite{JoeHarris} for a proof).   Then  the dimension of
$\overline{\mathcal{M}}_r$ is also $2nr- r^2$. 
Also, it is clear that $\overline{\mathcal{M}_{r}}$ is an algebraic variety. 
In fact, $\overline{\mathcal{M}_{r}}$ is an irreducible variety. 
\begin{lemma}\label{irreduciblity}
$\overline{\mathcal{M}_{r}}$ is an irreducible variety.. 
\end{lemma}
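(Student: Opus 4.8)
The plan is to realize $\overline{\mathcal{M}_{r}}$ as the image of an irreducible variety under a polynomial map, and then invoke the elementary fact that the continuous image of an irreducible space is irreducible. Concretely, I would consider the parametrization
$$
\phi : \mathbb{C}^{n \times r} \times \mathbb{C}^{n \times r} \longrightarrow \mathbb{C}^{n \times n}, \qquad \phi(X,Y) = XY^\top .
$$
Each entry of $XY^\top$ is a bilinear polynomial in the entries of $X$ and $Y$, so $\phi$ is a morphism of affine varieties and in particular is continuous in the Zariski topology.

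First I would verify that the image of $\phi$ is exactly $\overline{\mathcal{M}_{r}}$. For one inclusion, any product $XY^\top$ with $X,Y \in \mathbb{C}^{n \times r}$ has rank at most $r$, so $\phi(X,Y) \in \overline{\mathcal{M}_{r}}$. For the reverse inclusion, given $M$ of rank $k \le r$, a rank factorization $M = UV^\top$ with $U,V \in \mathbb{C}^{n \times k}$ (the same type of factorization used in the proof of Lemma~\ref{key1}) can be padded with $r-k$ zero columns to produce $X,Y \in \mathbb{C}^{n \times r}$ satisfying $M = XY^\top$. Hence $\overline{\mathcal{M}_{r}} = \phi\left(\mathbb{C}^{n \times r} \times \mathbb{C}^{n \times r}\right)$.

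Next I would use that the domain $\mathbb{C}^{n \times r} \times \mathbb{C}^{n \times r} \cong \mathbb{C}^{2nr}$ is irreducible, since its coordinate ring is the polynomial ring in $2nr$ variables, an integral domain (equivalently, its defining ideal $(0)$ is prime). The topological fact then finishes the argument: if $\overline{\mathcal{M}_{r}} = A_1 \cup A_2$ with each $A_i$ closed in $\overline{\mathcal{M}_{r}}$, then $\phi^{-1}(A_1)$ and $\phi^{-1}(A_2)$ are closed and cover the irreducible domain, forcing $\phi^{-1}(A_i) = \mathbb{C}^{2nr}$ for some $i$, whence $\overline{\mathcal{M}_{r}} = A_i$. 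Therefore $\overline{\mathcal{M}_{r}}$ is irreducible.

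The argument is essentially routine once the parametrization is set up; the one point demanding care is verifying that $\phi$ surjects onto all of $\overline{\mathcal{M}_{r}}$ rather than merely onto a dense subset. This is what lets me conclude irreducibility of the full variety directly from irreducibility of its image, without needing a closure step. Since $\overline{\mathcal{M}_{r}}$ is already Zariski closed, being cut out by the vanishing of all $(r+1)\times(r+1)$ minors, the image and the variety coincide exactly and the conclusion follows.
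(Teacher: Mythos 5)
Your proof is correct, but it takes a genuinely different route from the paper's. The paper proves irreducibility in two stages: it realizes the rank-exactly-$r$ locus $\mathcal{M}_r$ as the orbit of a single fixed rank-$r$ matrix under the action $(G_1,G_2)\cdot M \mapsto G_1 M G_2^{-1}$ of $GL(n)\times GL(n)$, so that $\mathcal{M}_r$ is the image of the irreducible variety $GL(n)\times GL(n)$ under a regular map and hence irreducible, and then it invokes the fact that the closure of an irreducible set is irreducible (citing Hartshorne) to pass to $\overline{\mathcal{M}_r}$. You instead parametrize the closed variety $\overline{\mathcal{M}_r}$ all at once, as the image of the bilinear map $\phi(X,Y)=XY^\top$ on $\mathbb{C}^{n\times r}\times\mathbb{C}^{n\times r}$, with surjectivity supplied by padding a rank factorization with zero columns; this collapses the argument to a single step, needs only the irreducibility of affine space (more elementary than that of $GL(n)\times GL(n)$, which is itself usually justified as a dense open subset of affine space), and entirely avoids both the closure lemma and the group action, whose map involves $G_2^{-1}$ and so is regular only after inverting $\det$. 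What the paper's approach buys in exchange is the orbit description of $\mathcal{M}_r$ as a homogeneous space under a transitive group action, which is structurally informative; your approach buys a shorter, self-contained argument that works verbatim over $\mathbb{R}$ or $\mathbb{C}$, and as a byproduct re-derives that the rank-$\le r$ matrices form exactly the Zariski closure of $\mathcal{M}_r$ (consistent with the paper's definition of $\overline{\mathcal{M}_r}$ as the set of matrices of rank at most $r$). Your closing point, that surjectivity onto the full closed set cut out by the $(r+1)\times(r+1)$ minors is what removes any need for a closure step, correctly identifies the one place where care is required.
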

\begin{proof}
Denote by $GL(n)$ the set of invertible $n\times n$ matrices. Consider the action of $GL(n)\times GL(n)$ 
on $M_n(R)$ given by: $ (G_1,G_2)\cdot M \mapsto G_1 M G_2^{-1}$, for all 
$G_1, G_2\in GL(n)$.  Fix a rank $r$ matrix $M$. Then the variety 
$\mathcal{M}_{r}$ is the orbit of $M$. 
Hence, we have a surjective morphism,  a regular algebraic map described by polynomials,  
from $GL(n)\times GL(n)$ onto $\mathcal{M}_{r}$. 
Since $GL(n)\times GL(n)$ is an irreducible variety, so is $\mathcal{M}_{r}$. 
Hence, the closure $\overline{\mathcal{M}_{r_g}}$ of the irreducible set $\mathcal{M}_{r_g}$ 
is also irreducible \emph{c.f} (cf. Example I.1.4 in \cite{Hartshorne}). 
\end{proof}

Consider the map
$$
\Phi_\Omega : \overline{\mathcal{M}_{r}} \rightarrow \mathbb{C}^{m}
$$ 
given by projecting any matrix $X \in \overline{\mathcal{M}_{r}}$ 
to its entries in position $\Omega$ which form a vector in $\mathbb{R}^{m}$.  Thus, 
$\Phi_\Omega(\overline{\mathcal{M}_r})$ are exactly the set of all $r-$feasible vectors in $\mathbb{C}^m$. 
As
the projection $\Phi_\Omega$ is nice (not like a Peano curve mapping $[0, 1] \to [0, 1]^2$), 
we expect that $\dim( 
\Phi_\Omega(\overline{\mathcal{M}_r}))$ is less than or equal to $\dim(\overline{\mathcal{M}_r})$ which 
is less than the dimension of $\mathbb{C}^m$. Thus, $\Phi_\Omega(\overline{\mathcal{M}_r})$ is not 
able to occupy the whole space $\mathbb{C}^m$. The Lebesgue measure of $\Phi_\Omega(\overline{\mathcal{M}_r})$
is zero and hence, randomly choosing a vector ${\bf x}\in \mathbb{C}^m$ will not be in 
$\Phi_\Omega(\overline{\mathcal{M}_r})$ most likely. Certainly, these intuitions should be made more precise. 
Recall the following result from Theorem 1.25 in Sec 6.3 of \cite{Shafarevich}.  
\begin{lemma}\label{fiberdimensionlemma}
Let $f : X \rightarrow Y$ be a regular map between irreducible varieties. 
Suppose that $f$ is surjective: $f(X) = Y$ , and that $\dim(X) = n$, $\dim(Y) = m$. Then $m \leq n$,
and 
\begin{enumerate}
	\item $\dim(F) \geq n-m$ for any $y \in Y$ and for any component $F$ of the fibre $f^{-1}(y)$;
	\item there exists a nonempty open subset $U \subset Y$ such that $\dim(f^{-1}(y)) = n - m$ for $y \in U$.
\end{enumerate}
\end{lemma}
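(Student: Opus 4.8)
The plan is to treat this as the classical theorem on the dimension of the fibres of a dominant morphism, so rather than reproving it from scratch I would follow the development in \cite{Shafarevich} and assemble it from three standard ingredients: the identification of dimension with the transcendence degree of the function field, the hypersurface section theorem (Krull's principal ideal theorem), and the existence of a generically finite factorization. First, for the inequality $m \le n$: since $f$ is dominant between irreducible varieties, $f^{*}$ embeds the function field $k(Y)$ into $k(X)$, and because $\dim Y = \operatorname{tr.deg}_{k} k(Y)$ and $\dim X = \operatorname{tr.deg}_{k} k(X)$, monotonicity of transcendence degree under field extension immediately gives $m \le n$.

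For statement (1), the lower bound $\dim F \ge n-m$, I would localize: replace $Y$ by an affine neighbourhood $Y_{0}$ of $y$ and $X$ by $f^{-1}(Y_{0})$. Since $\dim Y_{0} = m$, I can choose regular functions $g_{1}, \dots, g_{m} \in k[Y_{0}]$ cutting $y$ out as an isolated point, i.e. so that $\{y\}$ is a component of the common zero locus $V(g_{1}, \dots, g_{m})$; this is possible because in an affine variety each hypersurface section drops the dimension by at most one, so $m$ well-chosen sections isolate a point. Pulling back, any component $F$ of $f^{-1}(y)$ lies in $V(f^{*}g_{1}, \dots, f^{*}g_{m})$, and by the principal ideal theorem each of the $m$ functions $f^{*}g_{i}$ lowers the dimension by at most one. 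Starting from $\dim X = n$, this forces $\dim F \ge n - m$. The only delicate point here is checking that $F$ is genuinely a component of the pulled-back zero locus and is not absorbed into a larger one, which one handles by working at a general point of $F$.

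For statement (2) I would produce the exact generic value. After passing to affine models, I pick $u_{1}, \dots, u_{n-m} \in k[X]$ forming a transcendence basis of $k(X)$ over $k(Y)$; these define a dominant morphism $X \to Y \times \mathbb{A}^{n-m}$ which is \emph{generically finite}, since the remaining coordinates of $X$ are algebraic over $k(Y)(u_{1}, \dots, u_{n-m})$. By the theorem on generically finite maps its fibres over a nonempty open set are finite, of dimension $0$, while the projection $Y \times \mathbb{A}^{n-m} \to Y$ has every fibre of dimension exactly $n-m$. Composing these and intersecting the relevant open sets yields a nonempty open $U \subseteq Y$ over which $\dim f^{-1}(y) = n-m$. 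To pin the value at exactly $n-m$, rather than merely $\ge n-m$ from part (1), I would invoke the upper semicontinuity of the fibre-dimension function, a consequence of Chevalley's theorem on constructible images, which shows the locus where the fibre dimension exceeds $n-m$ is a proper closed subset, leaving an open $U$ with the stated equality.

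I expect the main obstacle to be statement (2), specifically the passage from ``the generic fibre has dimension $n-m$'' to the \emph{existence} of the open set $U$: this rests on the constructibility of images of morphisms and the semicontinuity of fibre dimension, which are the genuinely non-elementary inputs. By contrast, statement (1) and the inequality $m \le n$ are essentially formal once Krull's principal ideal theorem and the function-field description of dimension are in hand. Since all of these facts are developed in \cite{Shafarevich} (Sec.~6.3) and \cite{Hartshorne}, in the paper I would simply cite Theorem 1.25 of \cite{Shafarevich} for the full statement and use the sketch above only as motivation.
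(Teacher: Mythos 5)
Your proposal is correct and lands exactly where the paper does: the paper gives no proof of this lemma at all, merely quoting it as Theorem 1.25 in Sec.~6.3 of \cite{Shafarevich}, which is precisely the citation you settle on. Your preparatory sketch (transcendence degree for $m \leq n$, Krull's principal ideal theorem for the fibre lower bound, and a generically finite factorization through $Y \times \mathbb{A}^{n-m}$ plus semicontinuity for the generic equality) is a faithful outline of the standard argument in that same reference, so nothing further is needed.
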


We are now ready to prove 
\begin{theorem}
\label{nochance}
If one chooses randomly the entries of a matrix in the positions $\Omega$, 
probability of completing the matrix to a rank $r$ matrix with given known entries is 0.
\end{theorem}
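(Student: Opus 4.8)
The plan is to show that the set of $r$-feasible vectors is contained in a \emph{proper} algebraic subvariety of $\mathbb{C}^m$, hence has Lebesgue measure zero, so that a randomly chosen vector of known entries fails to be $r$-feasible with probability $1$. First I would work with the regular (indeed, coordinate-projection) map $\Phi_\Omega : \overline{\mathcal{M}_r} \rightarrow \mathbb{C}^m$ introduced above, whose image is exactly the collection of $r$-feasible vectors. By Lemma~\ref{irreduciblity}, the source $\overline{\mathcal{M}_r}$ is an irreducible variety, and its dimension is $2nr-r^2$. Let $Y := \overline{\Phi_\Omega(\overline{\mathcal{M}_r})}$ be the Zariski closure of the image in $\mathbb{C}^m$. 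Since the image of an irreducible set under a regular map is irreducible and taking closure preserves irreducibility, $Y$ is an irreducible subvariety of $\mathbb{C}^m$, and $\Phi_\Omega$ is a dominant morphism onto $Y$.

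Next I would extract the dimension bound. Applying Lemma~\ref{fiberdimensionlemma} (with $X = \overline{\mathcal{M}_r}$ and target $Y$) yields $\dim(Y) \le \dim(\overline{\mathcal{M}_r}) = 2nr - r^2$. Combining this with the standing hypothesis $m > 2nr - r^2$ gives
$$
\dim(Y) \le 2nr - r^2 < m = \dim(\mathbb{C}^m).
$$
Therefore $Y \subsetneq \mathbb{C}^m$ is a proper closed subvariety.

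Finally I would convert the strict dimension inequality into a measure statement. A proper subvariety $Y \subsetneq \mathbb{C}^m$ lies in the zero locus of some nonzero polynomial $p$; identifying $\mathbb{C}^m \cong \mathbb{R}^{2m}$, the common real zeros of the real and imaginary parts of $p$ form a set of $2m$-dimensional Lebesgue measure zero. As the set of $r$-feasible vectors is contained in $Y$, it too has measure zero, so a vector chosen at random (with respect to Lebesgue measure on $\mathbb{C}^m$, or any absolutely continuous distribution) is $r$-feasible with probability $0$, which is the assertion.

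The step I expect to be the main obstacle is the surjectivity hypothesis demanded by Lemma~\ref{fiberdimensionlemma}: the map onto $Y$ is only dominant, not literally surjective, since the constructible image need not be closed. I would resolve this either by passing to a dense open subset $U \subseteq Y$ over which $\Phi_\Omega$ is surjective and applying the lemma there, or, more directly, by invoking the standard fact that a dominant morphism of irreducible varieties induces an inclusion of function fields and hence $\dim(Y) = \operatorname{tr.deg} k(Y) \le \operatorname{tr.deg} k(\overline{\mathcal{M}_r}) = \dim(\overline{\mathcal{M}_r})$. Once the inequality $\dim(Y) < m$ is secured, the translation to Lebesgue measure zero is routine.
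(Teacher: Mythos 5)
Your proposal follows essentially the same route as the paper's own proof: both use the projection $\Phi_\Omega:\overline{\mathcal{M}_r}\rightarrow\mathbb{C}^m$, irreducibility of $\overline{\mathcal{M}_r}$ from Lemma~\ref{irreduciblity}, the dimension bound $\dim\Phi_\Omega(\overline{\mathcal{M}_r})\leq 2nr-r^2<m$, and the conclusion that the set of $r$-feasible vectors is a lower-dimensional subset of $\mathbb{C}^m$ and hence has measure zero. If anything, you are more careful than the paper, which asserts the image is a \emph{closed} proper subset without addressing that the image is only constructible; your fix via dominance onto the Zariski closure (or the transcendence-degree argument) patches exactly that gap, as does your explicit passage from a proper subvariety to Lebesgue measure zero in $\mathbb{R}^{2m}$.
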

\begin{proof}
We mainly use Lemma~\ref{fiberdimensionlemma}. Let $X=\overline{\mathcal{M}_r}$ which is an 
irreducible variety by Lemma~\ref{irreduciblity}. Let 
$Y=\Phi_\Omega(\overline{\mathcal{M}_r})$ which is also an irreducible variety as it is a 
continuous image of the irreducible variety $\overline{\mathcal{M}_r}$. 
 Clearly, $\Phi_\Omega$ is a regular map, 
we have $\dim\Phi_\Omega(\overline{\mathcal{M}_{r}}) \le 
\dim(\overline{\mathcal{M}_{r}})=2nr-r^2<m$. Thus,  
$\Phi_\Omega(\overline{\mathcal{M}_{r}})$ 
	is a proper lower dimensional closed subset in $\mathbb{C}^m$. 
For almost all points in $\mathbb{C}^m$, they do not belong to  
$\Phi_\Omega(\overline{\mathcal{M}_{r}})$. In other words, for almost all points  
${\bf x}\in \mathbb{C}^m$, there is no matrix  
$X\in \overline{\mathcal{M}_{r}}$ such that  $\Phi_\Omega(X)={\bf x}$.  
\end{proof}

Next define the subset $\chi_\Omega\subset \overline{\mathcal{M}_{r}}$ by
$$
\chi_\Omega = \left\{ X \in   \overline{\mathcal{M}_{r}}
\mid \Phi_\Omega^{-1}(\Phi_\Omega(X))\text{ is zero dimensional} \right\}. 
$$
As we are working over Noetherian fields like $\mathbb{R}$ or $\mathbb{C}$, it is worthwhile to keep in mind 
that all zero dimensional varieties over such fields will have only finitely many points. 
Next we recall the following result from  Proposition 11.12 in \cite{JoeHarris}.

\begin{lemma}\label{JoeHarrislemma}
Let $X$ be a quasi-projective variety and $\pi: X \rightarrow \mathbb{P}^m$ a regular map; let $Y$ be 
closure of the image. For any $p \in X$, let $X_p = \pi^{-1}\pi(p)) \subseteq X$ be the fiber of $\pi$	\
through $p$, and let $\mu(p) = \dim_p(X_p)$ be the local dimension of $X_p$ at $p$. Then $\mu(p)$ is an 
upper-semicontinuous function of $p$, in the Zariski topology on $X$ - that is, for any $m$ the locus of 
points $p \in X$ such that $\dim_p(X_p) > m$ is closed in $X$. Moreover, if $X_0 \subseteq X$ 
is any irreducible component, $Y_0 \subseteq Y$ the closure of its image and $\mu$ the minimum value 
	of $\mu(p)$ on $X_0$, then 
\begin{equation}
\label{dimension}
dim(X_0) = dim(Y_0) + \mu.
\end{equation}
\end{lemma}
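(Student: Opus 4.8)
The plan is to deduce both assertions from the fiber–dimension theorem already recorded as Lemma~\ref{fiberdimensionlemma}, combined with Noetherian induction on $\dim X$. Throughout I use that the local fiber dimension $\dim_p(X_p)$ is intrinsic to the fiber $X_p$ and records the largest dimension of an irreducible component of $X_p$ passing through $p$.

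First, for the dimension formula I would restrict attention to a single irreducible component $X_0$ and replace $Y_0$ by the closure of $\pi(X_0)$, so that $\pi|_{X_0} : X_0 \to Y_0$ is a surjective regular map of irreducible varieties. Writing $n = \dim X_0$ and $d = \dim Y_0$, Lemma~\ref{fiberdimensionlemma} supplies two facts simultaneously: every component of every fiber has dimension at least $n - d$, and over a nonempty open $U \subseteq Y_0$ the fibers have dimension exactly $n - d$. The first fact forces $\mu(p) = \dim_p(X_p) \geq n - d$ for all $p \in X_0$, since any component of $X_p$ through $p$ already has dimension $\geq n - d$; the second shows $\mu(p) = n - d$ for every $p$ lying over $U$. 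Hence the minimum value $\mu$ of $\mu(p)$ on $X_0$ equals $n - d$, which is precisely the claimed equality $\dim(X_0) = \dim(Y_0) + \mu$.

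For the upper-semicontinuity I would show that $E_m := \{p \in X : \dim_p(X_p) > m\}$ is closed, arguing by Noetherian induction on $\dim X$. Decomposing $X$ into irreducible components $X_0$, one checks that $E_m$ is the union, over these components, of the corresponding loci for the restrictions $\pi|_{X_0}$; since a finite union of closed sets is closed, it suffices to treat each component, so assume $X$ is irreducible with $Y = \overline{\pi(X)}$ and set $d = \dim X - \dim Y$. If $m < d$, then by Lemma~\ref{fiberdimensionlemma} every fiber component, in particular every one through a given $p$, has dimension $\geq d > m$, so $E_m = X$ is trivially closed. If $m \geq d$, pick the open $U \subseteq Y$ on which the fibers have dimension exactly $d$; no point over $U$ lies in $E_m$, so $E_m \subseteq Z := \pi^{-1}(Y \setminus U)$, a proper closed subset of $X$ (proper because $\pi$ is dominant). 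For $p \in Z$ one has $\pi(p) \notin U$, whence $X_p = \pi^{-1}(\pi(p)) \subseteq Z$, so the fiber of $\pi|_Z$ at $p$ coincides with $X_p$ and the two local fiber dimensions agree. Since $\dim Z < \dim X$, the induction hypothesis applied to $\pi|_Z : Z \to Y$ shows $E_m = \{p \in Z : \dim_p(X_p) > m\}$ is closed in $Z$, hence closed in $X$.

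The main obstacle is the bookkeeping around the \emph{local} fiber dimension $\dim_p(X_p)$ rather than the dimension of the whole fiber: one must verify that passing to the closed set $Z$ leaves the relevant fibers unchanged (the containment $X_p \subseteq Z$ for $p \in Z$ is the crucial point) and that the reduction to irreducible components is compatible with selecting the component of $X_p$ through $p$. Once these compatibilities are established, both parts follow cleanly from Lemma~\ref{fiberdimensionlemma}; this is the content of Proposition 11.12 in \cite{JoeHarris}, to which the statement is attributed.
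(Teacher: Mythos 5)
The paper never proves this lemma: it is quoted verbatim from Proposition~11.12 of \cite{JoeHarris} (with Lemma~\ref{Mumford} from \cite{Mumford} cited later for the related openness statement), so there is no internal proof to compare against. Your proposal supplies an actual argument, deducing both assertions from the fiber-dimension theorem already recorded as Lemma~\ref{fiberdimensionlemma} together with Noetherian induction, and this is in substance the standard textbook proof. The two delicate points you isolate are handled correctly: the containment $X_p = \pi^{-1}(\pi(p)) \subseteq Z = \pi^{-1}(Y \setminus U)$ for $p \in Z$, which guarantees that the fibers of $\pi|_Z$ coincide with those of $\pi$ so the induction hypothesis applies to the same local fiber dimensions; and the component reduction, which works because any irreducible component of $X_p$ through $p$, being irreducible, lies in a single component $X_i$ of $X$ containing $p$, so that $\dim_p(X_p) = \max_{i:\, p \in X_i} \dim_p(X_p \cap X_i)$ and the locus $E_m$ is a finite union of sets closed in the closed subsets $X_i$.

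Two small repairs are needed. First, Lemma~\ref{fiberdimensionlemma} as recorded assumes $f$ is surjective, whereas $\pi|_{X_0} : X_0 \rightarrow Y_0$ is only dominant onto the closure $Y_0$ of its image; this is harmless (the fibers over $Y_0 \setminus \pi(X_0)$ are empty and the theorem holds for dominant maps), but you invoke it silently, and you also need the image to meet $U$ to conclude $Z \neq X$ --- which follows since the constructible image is dense in $Y_0$. Second, your claim that $\mu(p) = n - d$ for \emph{every} $p \in X_0$ over $U$ is not quite right: $\mu(p)$ is the local dimension of the fiber of $\pi$ in all of $X$, so at a point of $X_0$ that also lies on another component $X_1$, a larger fiber component inside $X_1$ can force $\mu(p) > n - d$. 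This costs nothing for the formula $\dim(X_0) = \dim(Y_0) + \mu$: pick $p$ in the nonempty open subset $\bigl(X_0 \setminus \bigcup_{j \neq 0} X_j\bigr) \cap (\pi|_{X_0})^{-1}(U)$ of the irreducible $X_0$; at such a point every component of $X_p$ through $p$ lies in $X_0$, giving $\mu(p) \leq n - d$, while part (1) of Lemma~\ref{fiberdimensionlemma} gives $\mu(p) \geq n - d$ everywhere on $X_0$. With these one-line fixes your argument is complete, and it has the merit of making the paper's algebraic-geometric section self-contained where the paper itself relies on an external citation.
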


As we saw that $\dim(\Phi_\Omega(\overline{\mathcal{M}_{r}})\le \dim(\overline{\mathcal{M}_{r}})$, 
we can be more precise about these dimensions as shown in the following 
\begin{lemma}
\label{dimensions_been_equal}
Assume $m  > \dim(\overline{\mathcal{M}_{r}})$. Then $\chi_\Omega$ is open subset of 
$\overline{\mathcal{M}_{r}}$ and $\dim(\overline{\mathcal{M}_{r}}) = \dim(\overline{\Phi_\Omega(\overline{\mathcal{M}_{r}})}) = \dim(\Phi_\Omega(\overline{\mathcal{M}_{r}})) $ if and only if $\chi_\Omega \neq \emptyset$.
\end{lemma}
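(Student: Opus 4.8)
The plan is to reduce everything to a comparison of the two dimensions $\dim \overline{\mathcal{M}_r}$ and $\dim \overline{\Phi_\Omega(\overline{\mathcal{M}_r})}$, and then to read off both the openness of $\chi_\Omega$ and the dimension equality from the two cited fiber-dimension lemmas. Write $X := \overline{\mathcal{M}_r}$, which is irreducible by Lemma~\ref{irreduciblity}, and $Y := \overline{\Phi_\Omega(X)}$. Since $\Phi_\Omega$ is a regular (hence continuous) map, $\Phi_\Omega(X)$ is irreducible and constructible, so $Y$ is irreducible and $\dim \Phi_\Omega(X) = \dim Y$; this already gives the second of the two claimed equalities for free, and the whole statement reduces to the single equivalence $\dim X = \dim Y \iff \chi_\Omega \neq \emptyset$. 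Throughout I set $\delta := \dim X - \dim Y$, which is $\geq 0$ because $Y$ is a continuous image of $X$. The hypothesis $m > \dim X$ only serves to keep us in the interesting regime where $Y$ is a proper subvariety of $\mathbb{C}^m$, consistent with Theorem~\ref{nochance}.

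For the openness of $\chi_\Omega$, I would first note that every fiber $\Phi_\Omega^{-1}(\Phi_\Omega(X_0))$ contains $X_0$ and is therefore nonempty, so it is zero-dimensional exactly when its dimension is not $\geq 1$. Hence, setting $S := \{\, y \in Y : \dim \Phi_\Omega^{-1}(y) \geq 1 \,\}$, one has $\chi_\Omega = X \setminus \Phi_\Omega^{-1}(S)$. The plan is then to argue that $S$ is Zariski-closed in $Y$ (upper semicontinuity of the fiber dimension on the base), so that $\Phi_\Omega^{-1}(S)$ is closed in $X$ by continuity of $\Phi_\Omega$ and $\chi_\Omega$ is open. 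I expect this to be the main obstacle: Lemma~\ref{JoeHarrislemma} as stated controls only the \emph{local} fiber dimension $\mu(p)=\dim_p(X_p)$ as a function on the total space $X$, whereas $\chi_\Omega$ is governed by the dimension of the \emph{whole} fiber; a fiber could in principle carry an isolated point together with a positive-dimensional component, so $\{p:\mu(p)=0\}$ is in general strictly larger than $\chi_\Omega$. The way I would close this gap is to deduce the base-level statement ``$S$ is closed'' from Lemma~\ref{fiberdimensionlemma} by Noetherian induction on $Y$: part~(2) exhibits a dense open set over which the fiber dimension equals $\delta$, confining the larger fibers to a proper closed subset, and iterating on that subset yields closedness of each locus $\{\, y : \dim \Phi_\Omega^{-1}(y) \geq k \,\}$.

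Finally, for the equivalence I would use the two lemmas directly. For the direction $\chi_\Omega \neq \emptyset \Rightarrow \dim X = \dim Y$, pick $X_0 \in \chi_\Omega$; then $\Phi_\Omega^{-1}(\Phi_\Omega(X_0))$ is finite, so the local fiber dimension there is $\mu(X_0)=0$, and applying Lemma~\ref{JoeHarrislemma} to the irreducible $X$ (its own unique component) gives $\dim X = \dim Y + \min_{p} \mu(p)$ with $0 \leq \min_p \mu(p) \leq \mu(X_0)=0$, whence $\dim X = \dim Y$. For the converse, assume $\delta = 0$. Choose a dense open $V \subseteq Y$ contained in the constructible set $\Phi_\Omega(X)$, so that the restriction $\Phi_\Omega : \Phi_\Omega^{-1}(V) \to V$ is a surjective regular map between irreducible varieties with $\dim \Phi_\Omega^{-1}(V) = \dim X$ and $\dim V = \dim Y$. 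Lemma~\ref{fiberdimensionlemma}(2) then produces a nonempty open $U \subseteq V$ over which every fiber has dimension $\dim X - \dim Y = \delta = 0$; such a fiber is a nonempty finite set, so any matrix in it lies in $\chi_\Omega$, giving $\chi_\Omega \neq \emptyset$. As a byproduct, the same lemmas show that when $\delta \geq 1$ every fiber is positive-dimensional (each $\mu(p) \geq \delta \geq 1$), so $\chi_\Omega = \emptyset$, matching the equivalence.
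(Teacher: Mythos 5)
Your proposal is correct and, on the central equivalence, follows the same route as the paper: you prove $\chi_\Omega \neq \emptyset \Rightarrow \dim(\overline{\mathcal{M}_r}) = \dim(\overline{\Phi_\Omega(\overline{\mathcal{M}_r})})$ via Lemma~\ref{JoeHarrislemma} with $\mu = 0$, and the converse via part (2) of Lemma~\ref{fiberdimensionlemma}, exactly as the paper does (your extra step of shrinking to a dense open $V$ inside the constructible image so that the restricted map is genuinely surjective is a refinement the paper glosses over, and is worth having since Lemma~\ref{fiberdimensionlemma} assumes surjectivity). The one real divergence is the openness of $\chi_\Omega$: the paper simply cites Lemma~\ref{Mumford}, which is stated on the \emph{source} --- $\{x : \dim \phi^{-1}\phi(x) < 1\}$ is open in $X$ --- and is precisely the whole-fiber strengthening of the local statement $\mu(p)$ that you correctly identified as necessary; your Noetherian induction from Lemma~\ref{fiberdimensionlemma} is the standard proof of that cited corollary, buying self-containedness at the cost of length. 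One caution: your intermediate claim that $S = \{y : \dim \Phi_\Omega^{-1}(y) \geq 1\}$ is closed \emph{in} $Y$ is false in general for non-proper maps --- indeed, whenever $\dim X > \dim Y$, part (1) of Lemma~\ref{fiberdimensionlemma} gives $S = \Phi_\Omega(X)$, a constructible set that need not be closed in $Y = \overline{\Phi_\Omega(X)}$. What is true, and all you actually use, is the source-level statement that $\Phi_\Omega^{-1}(S)$ is closed in $X$ (equivalently, that $S$ is closed in the image with its subspace topology); your induction delivers this provided you run it on the preimages of the shrinking closed subsets of $Y$ rather than on the base loci themselves --- which is, again, exactly Lemma~\ref{Mumford}.
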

\begin{proof} Assume $\dim(\overline{\mathcal{M}_{r}}) = 
\dim(\overline{\Phi_\Omega(\overline{\mathcal{M}_{r}})}) = \dim(\Phi_\Omega(\overline{\mathcal{M}_{r}}))$. Then 
using Lemma \ref{fiberdimensionlemma}, there exists a nonempty open subset $U \subset 
\Phi_\Omega(\overline{\mathcal{M}_{r}})$ such that $\dim(\Phi_\Omega^{-1}(y)) = 0$ for all $y \in U$. 
This implies that $\Phi_\Omega^{-1}(y) \in \chi_\Omega$. Hence $\chi_\Omega \neq \emptyset$.
	 	
We now prove the converse. Assume $\chi_\Omega \neq \emptyset$.
We will apply Lemma~\ref{JoeHarrislemma} above by setting $X = \overline{\mathcal{M}_{r_g}}$, 
$Y=\Phi_\Omega(\overline{\mathcal{M}_{r_g}})$ and $\pi = \Phi_\Omega$. 
Couple of things to note here are that it does not matter whether we take the closure in $\mathbb{P}^m$ or in $\mathbb{C}^m$ since $\mathbb{C}^m$ is an open set in $\mathbb{P}^m$ and the Zariski topology of the affine space $\mathbb{C}^m$ is induced from the Zariski topology of $\mathbb{P}^m$. $\overline{\mathcal{M}_{r_g}}$ is an affine variety. Therefore, it is a quasi-projective variety. 

By our assumption, $\chi_\Omega$ is not empty. It follows that there is a point $p \in Y$ 
such that $\pi^{-1}(p)$ is zero dimensional. Since zero is the least dimension possible, 
we have $\mu = 0$. Hence, using (\ref{dimension}) above, we have $\dim(\overline{\mathcal{M}_{r}}) = 
\dim(\overline{\Phi_\Omega(\overline{\mathcal{M}_{r}})})$. 
But dimension does not change upon taking closure. So, $\dim(\Phi_\Omega(\overline{\mathcal{M}_{r}})) = 
\dim(\overline{\Phi_\Omega(\overline{\mathcal{M}_{r}})})$. Also, using Lemma \ref{Mumford}, 
$\chi_\Omega = \{x \in X : \dim(\phi^{-1}\phi(x)) < 1\}$ is an open subset of $\overline{\mathcal{M}_r}$.
\end{proof}

In the proof above, the following result was used. See   I.8. Corollary 3 in \cite{Mumford}.
\begin{lemma}
\label{Mumford}
Let $\phi : X \rightarrow Y$ be a morphism of affine varieties. Let
$\phi^{-1}\phi(x) = Z_1\cup\cdots \cup Z_j$ be the irreducible components of $\phi^{-1}\phi(x)$. 
Let $e(x)$ be the maximum of the dimensions of the $Z_i, i=1, \cdots, j$. 
Let $S_n(\phi) := \{x \in X : e(x) \geq n\}$. Then, for any $n\ge 1$, $S_n(\phi)$ is a Zariski closed
subset of $X$. Equivalently $\{x \in X : \dim(\phi^{-1}\phi(x)) < n\}$ is an open subset of $X$.
\end{lemma}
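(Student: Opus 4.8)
The plan is to deduce the closedness of $S_n(\phi)$ from the upper semicontinuity of the fiber-dimension function, which I would prove by Noetherian induction on $\dim X$ with the fiber-dimension theorem (Lemma~\ref{fiberdimensionlemma}) as the main engine. The starting observation is that $e(x)=\dim\phi^{-1}\phi(x)$ depends only on the value $\phi(x)$, so $S_n(\phi)$ is a union of entire fibers; it therefore suffices to understand over which points $y$ of the image the fiber dimension is at least $n$.

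First I would reduce to the case in which $X$ is irreducible and $\phi$ is dominant. Decomposing $X=X_1\cup\cdots\cup X_t$ into irreducible components and replacing $Y$ by $\overline{\phi(X_i)}$, one checks that $S_n(\phi)=\bigcup_i S_n(\phi|_{X_i})$, so that a finite union of closed sets is closed and it is enough to treat each component. With $X$ irreducible, $Y:=\overline{\phi(X)}$, and $r:=\dim X-\dim Y$, Lemma~\ref{fiberdimensionlemma} supplies two facts: every nonempty fiber has dimension at least $r$, so $S_n(\phi)=X$ whenever $n\le r$; and there is a dense open $U\subseteq Y$ over which every fiber has dimension exactly $r$. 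Hence for $n>r$ no point of $\phi^{-1}(U)$ can lie in $S_n(\phi)$, which forces $S_n(\phi)\subseteq X':=\phi^{-1}(Y\setminus U)$. The set $X'$ is a proper closed subvariety with $\dim X'<\dim X$, and crucially every fiber meeting $X'$ is contained in $X'$, since such a fiber maps to a single point of $Y\setminus U$. Consequently the fiber of $\phi$ through a point of $X'$ agrees with the fiber of the restriction $\phi|_{X'}$, so $S_n(\phi)=S_n(\phi|_{X'})$ for $n>r$; applying the induction hypothesis to $\phi|_{X'}$, whose source has strictly smaller dimension, shows this set is closed in $X'$ and hence in $X$.

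The hard part will be the bookkeeping rather than any single deep input: one must keep the global fiber dimension $e(x)$ (the maximum of the $\dim Z_i$) consistent with the local dimensions that appear when passing to irreducible components, and verify at each stage that the descent to $\phi^{-1}(Y\setminus U)$ genuinely lowers $\dim X$ so that the Noetherian induction is well founded. Once $S_n(\phi)$ is known to be closed for every $n\ge 1$, the equivalent assertion that $\{x\in X:\dim(\phi^{-1}\phi(x))<n\}$ is open follows immediately by taking complements, which is precisely the form invoked for $\chi_\Omega$ in the proof of Lemma~\ref{dimensions_been_equal}.
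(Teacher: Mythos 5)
The paper offers no proof of this lemma at all---it is quoted with the bare pointer ``See I.8, Corollary 3 in \cite{Mumford}''---so your Noetherian induction on $\dim X$, driven by Lemma~\ref{fiberdimensionlemma}, is in outline exactly the standard argument that sits behind that citation: reduce to $X$ irreducible and dominant, observe $S_n(\phi)=X$ for $n\le r$, and for $n>r$ descend to the fiber-saturated closed subset $X'=\phi^{-1}(Y\setminus U)$ of strictly smaller dimension, where the fibers of $\phi$ and of $\phi|_{X'}$ coincide. That skeleton is sound, up to the harmless caveat that once you replace $Y$ by $\overline{\phi(X_i)}$ the map is only dominant rather than surjective, so you need the routine dominant variant of Lemma~\ref{fiberdimensionlemma} (fibers over points of the image, with $U$ contained in the image).

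The genuine gap is the step you waved through with ``one checks'': the identity $S_n(\phi)=\bigcup_i S_n(\phi|_{X_i})$ is \emph{false} for $e(x)$ as defined in the statement, namely the maximum dimension over \emph{all} irreducible components of $\phi^{-1}\phi(x)$. A point $x\in X_1$ can owe its large value of $e(x)$ to a fiber component lying entirely in another component $X_2$ and not passing through $x$; such an $x$ lies in $S_n(\phi)$ but in no $S_n(\phi|_{X_i})$. Worse, under this global reading the lemma itself fails for reducible $X$: take $X=X_1\sqcup X_2\subset \mathbb{A}^4$ with $X_1=\left\{(u,0,0,1)\right\}\cong \mathbb{A}^1$ and $X_2=\left\{(u,v,z,0)\mid uv=1\right\}$, and $\phi(u,v,z,w)=u$. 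The fiber over $a\neq 0$ is a point together with a line, so $e=1$ there, while the fiber over $0$ is the single point $(0,0,0,1)$, so $e=0$; hence $S_1(\phi)=X\setminus\{(0,0,0,1)\}$, which is not Zariski closed. The repair---and what \cite{Mumford} actually proves---is to take $e(x)=\dim_x\phi^{-1}\phi(x)$, the maximum dimension of the components of the fiber \emph{through} $x$ (the paper's transcription silently drops ``through $x$''). With that local definition your decomposition identity becomes true, since every irreducible component of $\phi^{-1}\phi(x)$ containing $x$ lies in some $X_i$ containing $x$, and the saturated-descent step is unaffected because $X'$ is a union of fibers; the induction then goes through verbatim. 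So your argument correctly proves the true, local-dimension version of the semicontinuity statement, but as written it does not---and cannot---establish the literal global-dimension statement for reducible $X$; you flagged exactly this local-versus-global bookkeeping as ``the hard part'' and then never resolved it.
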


Finally, we need the following  
\begin{definition}
	The \emph{degree} of an affine or projective variety of dimension $k$ is the number of intersection points of the variety with $k$ hyperplanes in general position.
\end{definition}

For example, the degree of the algebraic variety $\overline{\mathcal{M}_r}$ is known. 
See Example 14.4.11 in \cite{Fulton}, i.e.  
\begin{example}\label{degreelemma}
	Degree of the algebraic variety $\overline{\mathcal{M}_r}$ is 
	$$
	\prod_{i=0}^{n-r-1}\frac{\binom{n+i}{r}}{\binom{r+i}{r}}
	$$
\end{example}
We are now ready to prove another main result in this section. 
\begin{theorem}
\label{numberofcompletion}
Fix $\Omega$. Assume that there exist a finite $r$-feasible vector ${\bf x}\in \mathbb{C}^m$
over the given $\Omega$. Then, with probability 1, any r-feasible vector ${\bf y}$ is finitely $r$-feasible. 
In other words, if one randomly chooses a feasible vector ${\bf x}$ in the positions $\Omega$, then, with probability 1, 
the matrix can be completed into a rank-$r$ matrix only in \emph{finitely many ways}.
In  additional, the number of ways to complete will be less than or equal to $\displaystyle 
\prod_{i=0}^{n-r-1}\frac{\binom{n+i}{r}}{\binom{r+i}{r}}$.
\end{theorem}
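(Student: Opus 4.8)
The plan is to read the result off from the fibre-dimension machinery already assembled, and then convert the zero-dimensionality of a generic fibre into a numerical bound via the degree of the determinantal variety. First I would observe that the hypothesis---the existence of a finite $r$-feasible vector---says precisely that some fibre $\Phi_\Omega^{-1}(\Phi_\Omega(X))$ is zero dimensional, i.e.\ that $\chi_\Omega\neq\emptyset$. Since we work under the standing assumption $m>2nr-r^2=\dim(\overline{\mathcal{M}_r})$, Lemma~\ref{dimensions_been_equal} then yields $\dim(\overline{\mathcal{M}_r})=\dim(\Phi_\Omega(\overline{\mathcal{M}_r}))=:k$, so that $\Phi_\Omega$ is a dominant, generically finite map of irreducible varieties, irreducibility coming from Lemma~\ref{irreduciblity}.

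Next, to obtain the ``probability $1$'' statement, I would apply part~(2) of Lemma~\ref{fiberdimensionlemma} to $\Phi_\Omega\colon\overline{\mathcal{M}_r}\to\Phi_\Omega(\overline{\mathcal{M}_r})$. Because the source and image now have the same dimension $k$, the generic fibre dimension furnished by that lemma equals $\dim(\overline{\mathcal{M}_r})-\dim(\Phi_\Omega(\overline{\mathcal{M}_r}))=0$, so there is a nonempty Zariski-open $U\subseteq\Phi_\Omega(\overline{\mathcal{M}_r})$ on which every fibre is zero dimensional, hence finite. Its complement in the image is a proper closed subvariety of dimension strictly less than $k$, and therefore of measure zero for the natural measure on the $k$-dimensional irreducible feasible set $\Phi_\Omega(\overline{\mathcal{M}_r})$. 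Thus a randomly chosen feasible vector lands in $U$ with probability $1$, which is exactly the assertion that it is finitely $r$-feasible; the openness of this good locus is also reflected in Lemma~\ref{Mumford}.

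Finally, for the explicit bound I would turn the generic fibre into a linear section of the determinantal variety. Fix ${\bf y}\in U$; then $\Phi_\Omega^{-1}({\bf y})=\overline{\mathcal{M}_r}\cap A_{\bf y}$, where $A_{\bf y}=\{X:X|_\Omega={\bf y}\}$ is cut out by the $m$ coordinate conditions. Since the projection of $\overline{\mathcal{M}_r}$ onto all $m$ coordinates of $\Omega$ is generically finite onto a $k$-dimensional image, some $k$ of those coordinates already give a dominant, generically finite coordinate projection $\overline{\mathcal{M}_r}\to\mathbb{C}^k$; retaining only those $k$ hyperplanes enlarges $A_{\bf y}$ to a codimension-$k$ affine-linear space $\Lambda\supseteq A_{\bf y}$ whose intersection with $\overline{\mathcal{M}_r}$ is still zero dimensional for ${\bf y}$ in a (possibly smaller) nonempty open subset of $U$. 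Passing to the projective closure of $\overline{\mathcal{M}_r}$ and homogenizing these $k$ affine conditions, $\Lambda$ becomes a codimension-$k$ projective linear subspace, and the standard degree bound---a zero-dimensional linear section of a projective variety of dimension $k$ has at most $\deg$ many points---gives $|\overline{\mathcal{M}_r}\cap A_{\bf y}|\le|\overline{\mathcal{M}_r}\cap\Lambda|\le\deg(\overline{\mathcal{M}_r})$. The value $\deg(\overline{\mathcal{M}_r})=\prod_{i=0}^{n-r-1}\binom{n+i}{r}/\binom{r+i}{r}$ from Example~\ref{degreelemma} then finishes the count.

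I expect the main obstacle to be this last step: justifying that a \emph{special} (coordinate) zero-dimensional linear section of the projective determinantal variety still has at most $\deg(\overline{\mathcal{M}_r})$ points, together with the bookkeeping of homogenizing the affine conditions $X|_\Omega={\bf y}$ without introducing spurious points at infinity, and the verification that the $k$ chosen coordinates can be made to cut out a zero-dimensional section for the \emph{same} generic ${\bf y}$ already lying in $U$. The first of these is the classical fact that degree is upper-semicontinuous under specialization of the cutting linear space, and the remaining points are routine once that is in hand.
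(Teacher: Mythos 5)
Your proposal follows essentially the same route as the paper's proof: the hypothesis gives $\chi_\Omega\neq\emptyset$, Lemma~\ref{dimensions_been_equal} yields equality of the dimensions of $\overline{\mathcal{M}_r}$ and its image, Lemma~\ref{fiberdimensionlemma} produces the nonempty Zariski-open $U$ of finitely $r$-feasible vectors whose complement is a proper closed subvariety of measure zero, and the count comes from viewing the fibre as the intersection of $\overline{\mathcal{M}_r}$ with the coordinate hyperplanes $M_{ij}=\text{constant}$ together with the degree formula of Example~\ref{degreelemma}. The only divergence is that where the paper dismisses the final step in one sentence (``the number of intersection points would be lesser than degree of $\overline{\mathcal{M}_r}$ generically''), you spell out the linear-section argument --- choosing $k$ coordinates, homogenizing, and invoking the classical bound (refined B\'ezout) that a zero-dimensional linear section of a projective variety has at most $\deg$ points --- which is a more careful rendering of the same idea, not a different method.
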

\begin{proof} We begin by noting that, both $\overline{\mathcal{M}_r}$ 
and $\Phi_\Omega(\overline{\mathcal{M}_r})$ are irreducible varieties. So, the closure 
$\overline{\Phi_\Omega(\overline{\mathcal{M}_r})}$ is also an irreducible variety. 
By the assumption and using Lemma~\ref{dimensions_been_equal},  $\dim(\overline{\mathcal{M}_{r}}) = 
\dim(\overline{\Phi_\Omega(\overline{\mathcal{M}_{r}})})$. Hence, applying Lemma~\ref{fiberdimensionlemma}, 
there exist a nonempty open subset $U \subset \overline{\Phi_\Omega(\overline{\mathcal{M}_{r}})}$ such that 
$\Phi_\Omega^{-1}(y)$ is zero-dimensional for all $y \in U$.
In other words, If we choose the $m$ entries in positions $\Omega$ of a matrix from the open set $U$, then 
there are finitely many ways to complete the matrix. The result now follows by recalling that a Zariski open 
set in an irreducible variety is a dense set whose complement has Lebesque measure zero.

When we fix $m$ entries of a matrix $M$, the set of matrices of rank $r$ which has those entries in the positions $\Omega$ are exactly the intersection points of the variety $\overline{\mathcal{M}_r}$ with 
$m$ hyperplanes, namely the hyperplanes defined by equations of form $M_{ij} = constant$. Since 
$m>\dim(\overline{\mathcal{M}_r}) = 2nr-r^2$, the number of intersection points would be lesser than 
degree of $\overline{\mathcal{M}_r}$ generically. 
Now using the exact formula for the degree from Example~\ref{degreelemma}, the result follows.
\end{proof}

Regarding Theorem~\ref{fact8}, 
we have the following open problem: given ${\bf x}\in {\cal C}^m$, how to check if there are only finitely many
matrices $Y\in\overline{{\cal M}_r}$ satisfying $(Y)_\Omega= {\bf x}$. 


%


\noindent
{\bf Acknowledgment:} The first author would like to thank Professors Xiaofei He and 
Jieping Ye for their hospitality during his visits at Zheijiang University and 
University of Michigan in 2014. The second author would like to thank Anand Deopurkar for his helpful comments 
and suggestions in regard to the algebraic-geometric aspects of the paper.

\end{document}